\documentclass[reqno, 10pt]{amsart}
{
\usepackage{amsmath}
\usepackage[active]{srcltx}
\usepackage{amssymb}
\usepackage[dvips]{graphicx}
\usepackage{xcolor}
\usepackage{hyperref}
\usepackage[margin=1in]{geometry}

\usepackage{amsmath}
\usepackage[active]{srcltx}
\usepackage{amssymb}
\usepackage[dvips]{graphicx}
\usepackage{xcolor}
\usepackage{hyperref}

\numberwithin{equation}{section}
\newtheorem{Theorem}{Theorem}[section]
\newtheorem{Definition}[Theorem]{Definition}
\newtheorem{Lemma}[Theorem]{Lemma}
\newtheorem{Remark}[Theorem]{Remark}

\newtheorem{Corollary}[Theorem]{Corollary}
\setlength{\parskip}{1em}

\usepackage{enumerate}

\begin{document}
\title{ joint normality of representations of numbers: an ergodic approach}

\author[V. Bergelson]{Vitaly Bergelson}
\address[V.Bergelson]{Department of Mathematics \\ Ohio State University \\ Columbus, OH 43210, USA}
\email{vitaly@math.ohio-state.edu}

\author[Y. Son]{Younghwan Son}
\address[Y. Son]{Department of Mathematics \\  POSTECH \\  Pohang, 37673, Korea}
\email{yhson@postech.ac.kr}

\date{\today}
\maketitle

\begin{abstract} 
We introduce an ergodic approach to the study of {\em joint normality} of representations of numbers. For example, we show that for any integer $b \geq 2$ almost every number $x \in [0,1)$ is jointly normal with respect to the $b$-expansion and continued fraction expansion. This fact is a corollary of the following result which deals with {\em pointwise joint ergodicity}:  

Let $T_b:[0,1] \rightarrow [0,1]$ be the times $b$ map defined by $T_b x = bx \,  \bmod \, 1 $ and let $T_G:[0,1] \rightarrow [0,1]$ be the Gauss map defined by $T_G(x) = \{\frac{1}{x}\}$ for $x \ne 0$ and $T_G (0) =0.$  (Here $\{ \cdot \}$ denotes the fractional part.)
For any $f, g \in L^{\infty} (\lambda)$,
\begin{equation*}
\label{abs:eqn}
\lim_{N \rightarrow \infty} \frac{1}{N } \sum_{n=0}^{N-1} f(T_b^{n}x)  \, g(T_G^n x) = \int f \, d \lambda \cdot \int g \, d \mu_G
\quad \text{for almost every } x \in [0,1],
 \end{equation*}
where $\lambda$ is the Lebesgue measure on $[0,1]$ and $\mu_G$ is the Gauss measure on $[0,1]$ given by $\mu_G (A) = \frac{1}{ \log 2} \int_A \frac{1}{1+x} dx$ for any measurable set $A \subset [0,1]$.

We show that the phenomenon of the pointwise joint ergodicity takes place for a wide variety of number-theoretical maps of the interval and derive the corresponding corollaries pertaining to joint normality. 

We also establish the equivalence of various forms of normality and joint normality for representations of numbers, hereby providing a general framework for classical normality results.
\end{abstract} 

\renewcommand{\thefootnote}{\fnsymbol{footnote}} 
\footnotetext{\emph{Mathematics Subject Classification (2020)} 11K16, 37A44 }    \renewcommand{\thefootnote}{\arabic{footnote}} 

\section{Introduction}
\label{sec:Intro}

For $x \in [0,1)$ and integer $b  \geq 2 $, consider the $b$-expansion of $x$:
\begin{equation}
\label{eq:bexpansion:intro} 
x = \sum_{n=1}^{\infty} \frac{a_n}{b^n},
\end{equation}
where $ a_n \in \{0, 1, \dots, b-1\}$.\footnote{The expansion \eqref{eq:bexpansion:intro} is not unique for rational numbers of the form $\frac{a}{b^n}$. 
To ensure uniqueness, one can assume that $a_n < b-1$ for infinitely many $n$. This point is actually mute since we deal in this paper with results which apply to ``typical" $x \in [0,1).$}
The number $x$ is called {\em simply normal in base $b$} if for any $v \in  \{0, 1, \dots, b-1\}$, 
\[
\lim_{N \rightarrow \infty} \frac{ \# \{1 \leq n \leq N: a_n = v\}}{N} = \frac{1}{b}. 
\]
The number $x$ is called {\em normal in base $b$} if for any $k \in \mathbb{N}$ and for any finite string $S = ( v_1, \dots, v_k ) \in \{0, 1, \dots, b-1\}^k$, we have
\begin{equation}
\label{eq1.2:intro}
\lim_{N \rightarrow \infty} \frac{ \# \{1 \leq n \leq N: a_n = v_1, \dots, a_{n+k-1} = v_k\}}{N} = \frac{1}{b^k}. 
\end{equation}
The classical Borel's normal number theorem (\cite{Bor}) states that almost every $x$ (with respect to the Lebesgue measure $\lambda$) is normal in base $b$ (or $b$-normal). 

Nowadays this result can be easily obtained by means of the ergodic theory.\footnote{Borel derived his theorem from the law of large numbers. One can also derive Borel's theorem by proving first that a number $x \in [0,1)$ is normal in base $b$ if and only if the sequence $(b^n x), n \in \mathbb{N}$, is uniformly distributed  $\bmod \, 1$ and invoking a theorem due to Weyl (see \cite{KN}, Theorem 4.1 on p.32), which states that for any sequence $(a_n)$ of distinct integers, the sequence $(a_n x)$ is uniformly distributed $\bmod \, 1$ for almost every $x$. Various equivalent forms of normality are summarized in Theorem \ref{intro:equiv:b-normal} below. For historical comments on the notion of normal number and the controversies that Borel's paper generated, see Appendix in \cite{BeDoMi}.}  
Indeed, consider the map $T_b:[0,1] \rightarrow [0,1]$ defined by $T_b x = bx \, \bmod 1$. 
Transformation $T_b$ is Lebesgue measure preserving and ergodic, 
 and so by the pointwise ergodic theorem, we have for any $f \in L^1 (\lambda),$
\begin{equation}
\label{b-ergodic}
\lim_{N \rightarrow \infty} \frac{1}{N} \sum_{n=0}^{N-1} f(T_b^n(x)) = \int f \, d \lambda \quad \text{for a.e. } x.
\end{equation}
Let $C_b(S)$ be the set of numbers in $[0,1)$ whose first $k$ terms in $b$-expansion are $(v_1, \dots, v_k)$. 
Applying \eqref{b-ergodic} to the characteristic function  $1_{C_b(S)}$, we get \eqref{eq1.2:intro}.  

Ergodic theory also provides a convenient way of dealing with normality of continued fraction expansions. Let $T_G: [0,1] \rightarrow [0,1]$ be the Gauss map defined by $T_G(x) = \{\frac{1}{x}\}$ for $x \ne 0$ and $T_G (0) =0.$  (Here $\{ \cdot \}$ denotes the fractional part.)
Let $\mu_G$ denote the Gauss measure defined by $\mu_G (A) = \frac{1}{ \log 2} \int_A \frac{1}{1+x} dx$ for any measurable set $A \subset [0,1]$.
It is well-known that $T_G$ is $\mu_G$-preserving and ergodic. (See \cite{Kno} and \cite{Ryll}.) 
By invoking the pointwise ergodic theorem, we have that for any $f \in L^1 (\mu_G),$
\begin{equation}
\label{CFergodic}
\lim_{N \rightarrow \infty} \frac{1}{N} \sum_{n=0}^{N-1} f(T_G^n(x)) = \int f \, d \mu_G \quad \text{for a.e. } x.
\end{equation}

Consider now a regular continued fraction of a number $x \in [0,1)$:
\begin{equation}
\label{CFexpansion}
x = \cfrac{1}{c_1+\cfrac{1}{c_2+\cfrac{1}{c_3 + \cdots}}} = [c_1, c_2, c_3, ...],\footnote{We tacitly assume that $x$ is irrational. The expansions of rational numbers are finite and non-unique, but we may ignore such numbers throughout this paper.} \quad c_n \in \mathbb{N}.
\end{equation}
Given $k \in \mathbb{N}$ and  a finite string $S = (v_1, \dots , v_k) \in \mathbb{N}^k$, let $C_G(S)$ be the set of numbers in $[0,1)$ whose first $k$ terms in the continued fraction representation in \eqref{CFexpansion} are $(v_1, \dots, v_k)$. 
A number $x \in [0,1)$ is called {\em normal with respect to the continued fraction expansion}  (or CF-normal) if for any $k \in \mathbb{N}$ and for any string $S = (v_1, \dots , v_k) \in \mathbb{N}^k,$
\[ \lim_{N \rightarrow \infty} \frac{\# \{1 \leq n \leq N: c_n = v_1, \dots, c_{n+k-1} = v_k\}}{N} = \mu_G (C_G(S)). \]
It is not hard to show that \eqref{CFergodic} implies that almost every $x \in [0,1)$ is CF-normal. 

It follows that almost every $x \in [0,1)$ is simultaneously $b$-normal and CF-normal. 
In this paper we are addressing the following more subtle question: Is it true that almost every $x$ is {\em jointly $b$-normal and CF-normal}? 
By this we mean that for any $k, l \in \mathbb{N}$ and for any $S_1 = (v_1, \dots, v_k) \in \{0, 1, \dots, b-1\}^k$ and $S_2 = (w_1, \dots, w_l) \in \mathbb{N}^l$, 
\begin{equation}
\label{eq:jointnormal1}
   \lim_{N \rightarrow \infty} \frac{1}{N} \left| \{1 \leq n \leq N: a_{n+i-1} = v_i \, (1 \leq i \leq k)  \text{ and } c_{n+j-1} = w_j \, (1 \leq j \leq l) \} \right| 
=  \lambda (C_b(S_1)) \cdot \mu_G (C_G (S_2)). 
\end{equation}
Note that if $x$ is jointly $b$-normal and CF-normal, then it is $b$-normal and CF-normal and moreover
\begin{align} 
&\lim_{N \rightarrow \infty} \frac{ \# \{1 \leq n \leq N: a_{n+i-1} = v_i \, (1 \leq i \leq k)  \text{ and } c_{n+j-1} = w_j \, (1 \leq j \leq l) \}}{N} \\
 &=  \lim_{N \rightarrow \infty} \frac{ \# \{1 \leq n \leq N: a_{n+i-1} = v_i \, (1 \leq i \leq k) \}}{N} 
\times \lim_{N \rightarrow \infty} \frac{\# \{1 \leq n \leq N:  c_{n+j-1} = w_j \, (1 \leq j \leq l)  \}}{N}. 
\end{align}

The answer to this question is YES and is a consequence of the following theorem. 
\begin{Theorem}
\label{thm1-intro}
For any integer $b \geq 2$, for any $f, g \in L^{\infty} (\lambda)$,\footnote{Note that since Lebesgue measure $\lambda$ and Gauss measure $\mu_G$ are mutually absolutely continuous, we have $L^{\infty} (\lambda) = L^{\infty} (\mu_G)$.} 
\begin{equation}
\label{intro:eq:1.4}
\lim_{N \rightarrow \infty} \frac{1}{N } \sum_{n=0}^{N-1} f(T_b^{n}x)  \, g(T_G^n x) = \int f \, d \lambda \cdot \int g \, d \mu_G
 \end{equation}
for almost every $x \in [0,1]$.
\end{Theorem}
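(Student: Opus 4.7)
The plan is to reduce to a convenient dense class, prove an $L^2$-decay estimate for the twisted averages, and then upgrade to pointwise convergence via Borel--Cantelli along a subsequence combined with a maximal bound. By linearity and density, it suffices to handle $f(x) = e^{2\pi i k x}$ for $k \in \mathbb{Z}$ and $g$ in a convenient dense subalgebra of $L^\infty(\mu_G)$ (e.g.\ Lipschitz functions, or trigonometric polynomials). The case $k = 0$ reduces immediately to the pointwise ergodic theorem \eqref{CFergodic}. The remaining case is $k \neq 0$, where $\int f\, d\lambda = 0$, and the target becomes
\begin{equation*}
S_N(x) := \frac{1}{N}\sum_{n=0}^{N-1} e^{2\pi i k b^n x}\, g(T_G^n x) \xrightarrow[N \to \infty]{} 0 \quad \text{for a.e.\ } x.
\end{equation*}

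Next I would estimate the variance $\int_0^1 |S_N(x)|^2\, d\lambda(x)$ by expanding the square and analyzing the cross terms
\begin{equation*}
I_{n,m} := \int_0^1 e^{2\pi i k (b^n - b^m) x}\, g(T_G^n x)\, \overline{g(T_G^m x)}\, d\lambda(x), \qquad n > m.
\end{equation*}
The diagonal $n = m$ contributes $O(1/N)$. For the off-diagonal terms, the idea is to exploit the rapid oscillation of the character $e^{2\pi i k(b^n - b^m)x}$ (note $|b^n - b^m| \asymp b^n$) against a factor of the form $g \circ T_G^n \cdot \overline{g \circ T_G^m}$ that is piecewise regular with respect to the continued-fraction partition. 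I would aim to prove a bound of the shape $|I_{n,m}| \le C \theta^{n-m}$ for some $\theta < 1$, or at worst $|I_{n,m}| \le C (n-m)^{-\alpha}$, which in either case yields $\int |S_N|^2 d\lambda \lesssim N^{-\varepsilon}$.

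Finally, to promote the $L^2$-decay to pointwise convergence, I would apply Borel--Cantelli along a lacunary subsequence $N_j = \lfloor \rho^j \rfloor$, giving $S_{N_j} \to 0$ a.e. To control $S_N$ for $N_j \le N < N_{j+1}$, I would use the trivial bound $|S_N - (N_j/N) S_{N_j}| \le \|g\|_\infty (N - N_j)/N_j$ and pass to the limit $\rho \downarrow 1$ along a double subsequence (or alternatively invoke a maximal inequality; since $S_N$ is a modulated ergodic average, standard maximal estimates should apply).

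The main obstacle is the off-diagonal $L^2$ estimate for $I_{n,m}$. The function $g \circ T_G^n$ has unbounded total variation on $[0,1]$ (because the Gauss map has a countable Markov partition with infinitely many branches), so one cannot directly integrate by parts against the oscillating character. I plan to overcome this by decomposing $[0,1]$ into continued-fraction cylinders $[c_1,\dots,c_n]$, on each of which $T_G^n$ is a smooth monotone diffeomorphism onto $[0,1]$, and performing the oscillatory integral estimate cylinder-by-cylinder; the super-exponential contraction of cylinder lengths together with truncation of long digit strings (which contribute negligible $\lambda$-mass) should yield the required decay. An equivalent, perhaps cleaner, route is to exploit the spectral gap of the Gauss--Kuzmin--Wirsing transfer operator to reduce the mixed correlation to a sum of pure character averages $\frac{1}{N}\sum e^{2\pi i k b^n x}$, which are handled by the classical Weyl / Davenport--Erd\H{o}s--LeVeque type variance argument for $(b^n x)$.
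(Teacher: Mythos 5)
Your overall architecture (reduce to a dense class using the ergodic maximal function, prove an $L^2$ bound for $S_N$, then Borel--Cantelli along a lacunary subsequence with the trivial interpolation between $N_j$ and $N_{j+1}$) is sound and is essentially how the paper proceeds, except that the paper packages the last step via a lemma of Harman and therefore only needs the weaker variance bound $O((N-M)\log(N-M))$ over windows. The gap is in the central estimate: the proposed mechanism for bounding $I_{n,m}$ does not work for $2 \leq b \leq 10$. The entropy of the Gauss map is $h(T_G)=\pi^2/(6\log 2)\approx 2.37$, so by Shannon--McMillan--Breiman a typical continued-fraction cylinder of rank $n$ has length about $e^{-2.37\,n}$, which is \emph{far smaller} than the wavelength $|k(b^n-b^m)|^{-1}\asymp b^{-n}=e^{-n\log b}$ of the character whenever $\log b< h(T_G)$, i.e.\ for all $b\le 10$. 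On such a cylinder the character $e^{2\pi i k(b^n-b^m)x}$ is essentially constant, so there is no oscillation to integrate by parts against; the cylinder-by-cylinder bound degenerates to $\sum_C \|g\|_\infty^2\,\lambda(C)=O(1)$ with no decay in $n-m$ or in $n$. (Your observation that $g\circ T_G^n$ has bounded variation \emph{on each rank-$n$ cylinder} is correct and would give each cylinder a contribution $O(1/|K|)$, but that bound only beats the trivial one on the measure-zero-in-the-limit set of atypically long cylinders.) The ``cleaner route'' via the Gauss--Kuzmin--Wirsing spectral gap fails for the same quantitative reason: one would apply $L^m$ to $e^{2\pi i K x}$ with $K\asymp b^n$, and the spectral-gap estimate $\|L^m h-\rho\int h\,d\lambda\|\le Cr^m\|h\|_{BV}$ is swamped by $\|e^{2\pi iK\cdot}\|_{BV}\asymp b^n$. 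In short, the decorrelation of the $b$-adic and continued-fraction structures is exactly the content of the theorem; it cannot be extracted from stationary-phase-type cancellation alone, and the needed bound is not a function of $n-m$ only.

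What the paper actually uses in place of your oscillatory estimate is a counting argument driven by the \emph{gap between the entropies} $\log b$ and $\pi^2/(6\log 2)$, together with property M (exponential mixing of each map tested against Lebesgue measure, \eqref{propertyB:eqn}) and property E (a quantitative equipartition statement, \eqref{intro_eqn:propertyE}). Working on a window $M_1\le m<n\le M_2\le(1+\delta)M_1$, the set $T_1^{-m}A_1\cap T_1^{-n}A_1$ for the lower-entropy map is a union of at most $c\,e^{M_2(h_1+\epsilon)}$ ``good'' intervals; each such interval is, up to two boundary pieces, a union of rank-$M_1$ cylinders of the higher-entropy map, and the total boundary error $2c^2e^{M_2(h_1+\epsilon)-M_1(h_2-\epsilon)}$ is exponentially small precisely because $h_2>(1+\delta)h_1+(2+\delta)\epsilon+\gamma$. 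Property M then factors the measures. Note that this argument is symmetric in which entropy is larger (one simply orders the maps by entropy), whereas your oscillation heuristic could at best work in the regime $\log b>h(T_G)$, i.e.\ $b\ge 11$. If you want to salvage your plan, the off-diagonal estimate has to be replaced by an input of this kind; the rest of your outline (diagonal term $O(1/N)$, lacunary Borel--Cantelli, maximal-function reduction to dense classes) then goes through.
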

Indeed, by Theorem \ref{thm1-intro}, formula \eqref{intro:eq:1.4} is valid for $f = 1_{C_b(S_1)}$ and $g = 1_{C_G(S_2)}$ for any choice of sets $C_b(S_1)$ and $C_G(S_2)$. Since there are only countably many sets of the form $C_b(S_1)$ and $C_G(S_2)$, this gives \eqref{eq:jointnormal1}.

There are numerous examples of concrete constructions of base $b$ normal numbers and CF-normal numbers. (See, for instance, \cite{Ch}, \cite{CoEr}, \cite{DaEr}, \cite{PoPy}, \cite{AdKeSm}). It would be of interest to have concrete examples of numbers $x \in [0,1)$ which are jointly $b$-normal (for some fixed $b$) and CF-normal.

Theorem \ref{thm1-intro} is a special case of a much more general result, which involves piecewise monotone maps. To give a flavor of this more general result we will formulate now its special case which involves not only Gauss transformation $T_G$ and times $b$ transformations $T_b$, but also $\beta$-transformations $T_{\beta}$. Recall that for $\beta >1$, $\beta \notin \mathbb{N}$,
$\beta$-transformation  $T_{\beta}$ is given by $T_{\beta} x = \beta x \bmod \, 1$. These transformations were introduced by R\'{e}nyi (\cite{Re}) who in particular showed that  there exists a unique $T_{\beta}$-ergodic probability measure $\mu_{\beta}$ which satisfies $1 - \frac{1}{\beta} \leq \frac{d \mu_{\beta}}{d \lambda} \leq \frac{1}{1 -1 / \beta}$.
 
\begin{Theorem}[Theorem \ref{Cor:pointwise}]
\label{Thm1.2:pje_TimesbGaussBeta}
Let $s, t \in \mathbb{N}$.
Let $b_1, \dots, b_s$ be distinct integers with $b_i \geq 2$ for all $i = 1, 2, \dots, s$ and let $\beta_1, \dots, \beta_t$ be distinct non-integers with $\beta_i > 1$ for all $i =1, 2, \dots, t$.
\begin{enumerate}
\item For any  $f_1, \dots, f_s, g_1, \dots, g_t \in L^{\infty} (\lambda)$,
\begin{equation}
\label{eqn1:Thm1.2}
\lim_{N \rightarrow \infty} \frac{1}{N} \sum_{n=0}^{N-1}  \prod_{i=1}^s f_i ({T_{b_i}^n x}) \prod_{j=1}^t g_j ({T_{\beta_j}^n x})  = \prod_{i=1}^s \int f_i \, d \lambda \, \prod_{j=1}^t \int g_j \, d \mu_{\beta_j}
\end{equation} 
for almost every $x \in [0,1]$.  
\item Under the assumption $\log \beta_j \ne \frac{\pi^2}{6 \log 2}$ for $1 \leq j \leq t$, 
for any  $f_0,  f_1, \dots, f_s, g_1, \dots, g_t \in L^{\infty} (\lambda)$,
\begin{equation} 
\label{eqn2:Thm1.2}
\lim_{N \rightarrow \infty} \frac{1}{N} \sum_{n=0}^{N-1} f_0 (T_G^n x) \prod_{i=1}^s f_i ({T_{b_i}^n x}) \prod_{j=1}^t g_j ({T_{\beta_j}^n x})  = \int f_0 \, d \mu_G \, \prod_{i=1}^s \int f_i \, d \lambda \, \prod_{j=1}^t \int g_j \, d \mu_{\beta_j}
\end{equation}
for almost every $x \in [0,1]$. 
\end{enumerate} 
\end{Theorem}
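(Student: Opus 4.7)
The plan is to deduce Theorem \ref{Thm1.2:pje_TimesbGaussBeta} from a general pointwise joint ergodicity criterion that the paper develops for families of piecewise monotone, uniformly expanding interval maps, and then to verify its hypotheses in this specific setting. The argument has three stages.

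First, I would reduce to test functions in a convenient dense class --- concretely, functions of bounded variation (BV). Each of $T_{b_i}, T_{\beta_j}, T_G$ preserves an absolutely continuous probability measure with bounded density and has a Perron--Frobenius (transfer) operator with a spectral gap on BV: Lasota--Yorke for $T_b$ and $T_\beta$, Wirsing for $T_G$. In particular each enjoys exponential decay of correlations for BV against $L^\infty$. The uniform $L^\infty$ bound on the ergodic averages then allows approximation of arbitrary $f_i, g_j, f_0 \in L^\infty(\lambda)$ by BV functions, so it suffices to treat BV test functions.

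Second, for BV test functions I would establish the $L^2$-variance estimate
\[
\int_0^1 \bigg| \frac{1}{N} \sum_{n=0}^{N-1} F_n(x) \,-\, C \bigg|^2 \, d\lambda(x) \;=\; O(N^{-1+\varepsilon}),
\]
where $F_n(x) = \prod_i f_i(T_{b_i}^n x) \prod_j g_j(T_{\beta_j}^n x)$ (with an additional factor $f_0(T_G^n x)$ in case~(2)) and $C$ is the corresponding product of integrals. Expanding the square reduces this to an exponential decay estimate for the joint correlations $\int F_n \overline{F_m} \, d\lambda$ as $|n-m| \to \infty$. Once such a variance bound is in hand, a Gal--Koksma / Borel--Cantelli argument gives a.e.\ convergence along a subsequence $N_k = \lfloor k^{1+\eta}\rfloor$ for any fixed $\eta > 0$, and a maximal interpolation between consecutive $N_k$'s (using the uniform $L^\infty$ bound) extends convergence to all $N$.

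The heart of the matter, and the point at which the hypothesis $\log \beta_j \neq \pi^2/(6\log 2)$ in part (2) is used, is the joint decay of correlations. By a transfer-operator argument this reduces to showing that no two distinct transformations in the collection share any nontrivial peripheral spectrum of their transfer operators beyond the simple eigenvalue $1$. For distinct pairs among the $T_{b_i}$ and $T_{\beta_j}$, spectral disjointness is enforced by the different expansion rates and by the integrality/non-integrality distinction. The Gauss map has Kolmogorov--Sinai entropy $\pi^2/(6\log 2)$ by Rokhlin's formula, and the hypothesis in part (2) is exactly what is needed to exclude the entropy resonance $\log \beta_j = \pi^2/(6\log 2)$ that would obstruct joint decay of correlations between $T_G$ and $T_{\beta_j}$. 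The most delicate step is therefore the quantitative, uniform-in-observable decay estimate for the joint correlations involving the Gauss map together with maps of different expansion rates.
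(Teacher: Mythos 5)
Your overall strategy --- reduce to a dense class of test functions, prove an $L^2$-variance bound for block sums, and conclude a.e.\ convergence by a Gal--Koksma type argument --- matches the skeleton of the paper's proof: the paper reduces to indicators of cylinder sets, proves $\int \left( \sum_{M_1 \leq n \leq M_2} (F_n - C) \right)^2 d\lambda = O(M_2 - M_1)$ on blocks with $M_2 \leq (1+\delta)M_1$, and applies Harman's Lemma 1.5. The genuine gap is in your third stage, which is where all the difficulty lives. You reduce everything to decay of the joint correlations $\int F_n \overline{F_m}\, d\lambda$ and assert that ``by a transfer-operator argument this reduces to showing that no two distinct transformations share any nontrivial peripheral spectrum.'' There is no such reduction: the maps $T_{b_i}, T_{\beta_j}, T_G$ do not commute, the product $\prod_i f_i(T_i^n x)$ is not of the form $g(S^n x)$ for any single map $S$, and there is no joint transfer operator whose iterates produce these mixed correlations. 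Individual spectral gaps on BV (Lasota--Yorke, Wirsing) give decay of correlations for each map separately, but say nothing about $\int \prod_i f_i(T_i^n x) \prod_i \overline{f_i(T_i^m x)}\, d\lambda$; this is exactly the ``diagonal phenomenon'' the paper flags in the Introduction when contrasting its result with Maxfield's. Naming the obstruction an ``entropy resonance in the peripheral spectrum'' does not supply an argument, and the distinct-entropy hypothesis is in fact used in the paper in an entirely non-spectral way.

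What the paper actually does at this point: for cylinders $A_i$ of rank $l$, the set $T_q^{-m}A_q \cap T_q^{-n}A_q$ is a union of rank-$(n+l)$ cylinders of $T_q$; by property E (a quantitative Shannon--McMillan--Breiman equipartition) the ``good'' such cylinders number at most $c\,e^{M_2(h_q+\epsilon)}$, while each good rank-$M_1$ cylinder of $T_{q+1}$ has measure at most $c\,e^{-M_1(h_{q+1}-\epsilon)}$. Working on blocks $M_1 < m < n \leq M_2 \leq (1+\delta)M_1$, the boundary error in approximating the union of $T_q$-cylinders by a union of rank-$M_1$ cylinders of $T_{q+1}$ is at most $2c^2 e^{M_2(h_q+\epsilon) - M_1(h_{q+1}-\epsilon)} \leq e^{-\gamma M_1}$, precisely because $h_q < h_{q+1}$. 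This counting step, iterated over $q$ and combined with the mixing inequality of property M (which for these maps does come from Rychlik's spectral gap, via Aaronson--Nakada), is the substitute for the joint correlation decay you need. Note also that the resulting error terms contain contributions of size $O(1/M_1)$ from the bad cylinders, so the joint correlations do \emph{not} decay as a function of $|n-m|$ alone, only jointly in $|n-m|$ and $\min(m,n)$; your proposed reduction to exponential decay in $|n-m|$ would therefore have to be reformulated even if a spectral mechanism existed. As written, the third stage of your proposal is missing the key idea and would not go through.
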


\begin{Remark}
The number $\frac{\pi^2}{6 \log 2}$ is the entropy of $T_G$. Also $\log b$ and $\log \beta$ are the entropies of $T_b$ and $T_{\beta}$ respectively. So, entropies of transformations in the above theorem are all distinct; this fact plays an important role in the proof of Theorem \ref{Thm1.2:pje_TimesbGaussBeta}. When $\log \beta = \frac{\pi^2}{6 \log 2}$, we do not know whether the limit of the averages
\[  \frac{1}{N} \sum_{n=0}^{N-1} f(T_G^n x) \, g(T_{\beta}^n x), \, N \in \mathbb{N} \]
exists almost everywhere or in $L^2$.
\end{Remark}

\begin{Remark}
Putting in formula \eqref{eqn1:Thm1.2} $g_j =1, (j = 1, 2, \dots, t)$ gives us the following statement    
\begin{equation}
\label{specialcase:eq}
\lim_{N \rightarrow \infty} \frac{1}{N} \sum_{n=0}^{N-1}  \prod_{i=1}^s f_i ({b_i^n x})  = \prod_{i=1}^s \int f_i \, d \lambda \quad \text{for a.e. } x.
\end{equation}
This result (which deals with commuting transformations having the same invariant measure) can be also derived from the work of Berend (see Theorem 5.1 in \cite{Be}).
A related result due to Maxfield \cite{Max} states, in its ergodic form, that for almost every $s$-tuple $(x_1, \dots, x_s) \in [0,1]^s$,
\begin{equation}
 \lim_{N \rightarrow \infty} \frac{1}{N} \sum_{n=0}^{N-1}  \prod_{i=1}^s f_i ({b_i^n x_i})  = \prod_{i=1}^s \int f_i \, d \lambda.
 \end{equation}
Maxfield's result follows immediately - via the pointwise ergodic theorem - from the ergodicity of the measure preserving transformation $T_{b_1} \times \cdots \times T_{b_s}$ acting on $[0,1]^s$. On the other hand, formula \eqref{eqn1:Thm1.2} (and even its special case \eqref{specialcase:eq}) is quite a bit less trivial and manifests a subtle ``diagonal phenomenon" which is also behind Furstenberg's approach to Szemer\'edi's theorem (see \cite{Fur2}). 
\end{Remark}

For a non-integer $\beta > 1$, any number $x \in [0,1)$ can be written as 
\begin{equation} 
\label{beta:representation}
x = \sum_{n=1}^{\infty} \frac{d_n}{\beta^n}, \, \, d_n \in \{ 0, 1, \dots, [\beta]\}.
\end{equation}
It is known that almost every $x$ in the interval $I_{\beta} = [0, \frac{1}{\beta - 1}]$ has a continuum of different representation of  the form \eqref{beta:representation}  (\cite{ErJK} and \cite{Sid}). 
We will be working with a special form of the representation \eqref{beta:representation} that was introduced in \cite{Re} under the name {\em $\beta$-expansion} of $x$ and is defined by the condition $d_n = [\beta T_{\beta}^{n-1} (x)]$ for $n \geq 1$.  

Following \cite{IS}, we say that $x$ is {\em $\beta$-normal} if for any finite string $S = (v_1, \dots, v_k) \in \{ 0, 1, \dots, [\beta]\}^k, k \in \mathbb{N}$,
\begin{equation}
\lim_{N \rightarrow \infty} \frac{ \# \{1 \leq n \leq N: d_n = v_1, \dots, d_{n+k-1} = v_k\}}{N} = \mu_\beta (C_{\beta}(S)),
\end{equation}
where $C_{\beta}(S)$ consists of numbers in $[0,1)$, whose first $k$ terms in the $\beta$-expansion are $(v_1, \dots, v_k)$.
As in the case of $b$-expansions, the pointwise ergodic theorem implies that almost every $x \in [0,1)$ is $\beta$-normal. 

\begin{Remark}
If $b$ is an integer with $b \geq 2$, for any finite string $S$ in $\{0, 1, \dots, b-1\}^k, k \in \mathbb{N}$, there is a number $x$ such that $S$ appears in the $b$-expansion of $x$, so $C_b(S) \ne \emptyset$. However, the situation for $\beta$-expansions is somewhat different. For instance, if $\beta = \frac{3}{2}$, then it is not hard to check that $C_{\beta} (S) = \emptyset$ for $S = (1, 1)$.  
\end{Remark}

We will introduce now some definitions which will allow us to formulate a corollary of Theorem \ref{Thm1.2:pje_TimesbGaussBeta} in terms of joint normality.

Let $s, t \in \mathbb{N}$. Let $b_1, b_2, \dots, b_s$ be distinct integers with $b_i \geq 2$ for all $i = 1, 2, \dots, s$ and let $\beta_1, \beta_2, \dots, \beta_t$ be distinct non-integers with $\beta_j > 1$ for all $j =1, 2, \dots, t$. 
Consider the following representations of $x \in [0,1)$: 
\begin{enumerate}
\item $b_i$-expansion ($1 \leq i \leq s$),
\[ x = \sum_{n=1}^{\infty} \frac{a_n^{(i)}}{b_i^n}, \quad a_n^{(i)} \in \{0, 1, \dots, b_i-1\},\]
\item $\beta_j$-expansion ($1 \leq j \leq t$),
\[x = \sum_{n=1}^{\infty} \frac{d_n^{(j)}}{\beta_j^n}, \quad d_n^{(j)} \in \{ 0, 1, \dots, [\beta_j]\},\]
\item continued fraction expansion
\[ x = \cfrac{1}{c_1+\cfrac{1}{c_2+\cfrac{1}{c_3 + \cdots}}} = [c_1, c_2, c_3, ...], \quad c_n \in \mathbb{N}.\]
\end{enumerate}

For $1 \leq i \leq s$, let $V_i = (v_1^{(i)}, \cdots, v_{k_i}^{(i)})$ be a finite string in  $\{0, 1, \dots, b_i -1\}^{k_i}$, for $1 \leq j \leq t$, let $W_j = (w_1^{(j)}, \cdots, w_{l_j}^{(j)})$ be a finite string in $\{0, 1, \dots, [\beta_j]\}^{l_j}$ and let $U = (u_1, \dots, u_m)$ be a finite string in $\mathbb{N}^m$.
For $x \in [0,1)$, let  
\begin{equation*}
    \begin{split}
        F_N^1 (x, V_1, \cdots, V_s, W_1, \cdots, W_t) =
 \# \{1 \leq n \leq N: \, &a_{n+p-1}^{(i)} = v_p^{(i)}, \, 1 \leq p \leq k_i, 1 \leq i \leq s,\\ &d_{n+q-1}^{(j)} = w_q^{(j)}, \, 1 \leq q \leq l_j,  1 \leq j \leq t \}.
    \end{split}
\end{equation*}
Similarly, for $x \in [0,1)$, let  
\begin{equation*}
    \begin{split}
      F_N^2(x, V_1, \cdots, V_s, W_1, \cdots, W_t, U)=   \# \{1 \leq n \leq N: \, &a_{n+p-1}^{(i)} = v_p^{(i)}, \, 1 \leq p \leq k_i, 1 \leq i \leq s, \\
        &d_{n+q-1}^{(j)} = w_q^{(j)}, \, 1 \leq q \leq l_j,  1 \leq j \leq t, c_{n+r-1}  = u_r, \, 1 \leq r \leq m \}.
    \end{split}
\end{equation*} 

We will say that $x \in [0,1)$ is jointly $(b_1, \dots, b_s, \beta_1, \cdots, \beta_t)$-normal if for any strings $V_1, \dots, V_s, W_1, \dots, W_t$,
 \[ \lim_{N \rightarrow \infty} \frac{F_N^1 (x, V_1, \cdots, V_s, W_1, \cdots, W_t)}{N} = 
 \prod_{i=1}^s \lambda (C_{b_i} (V_i)) \, \prod_{j=1}^t \mu_{\beta_j} (C_{\beta_j} (W_j)),\]
and is jointly $(b_1, \dots, b_s, \beta_1, \cdots, \beta_t, CF)$-normal if for any strings $V_1, \dots, V_s, W_1, \dots, W_t, U$, 
    \[ \lim_{N \rightarrow \infty} \frac{F_N^2(x, V_1, \cdots, V_s, W_1, \cdots, W_t, U)}{N}
    =  \prod_{i=1}^s \lambda (C_{b_i}(V_i)) \, \prod_{j=1}^t \mu_{\beta_j} (C_{\beta_j}(W_j)) \, \mu_G (C_G(U)),\]
where $C_{b_i} (V_i), C_{\beta_j} (W_j)$ and $C_G(U)$ are the so called {\em cylinder sets}, which were defined above and which correspond, respectively, to strings  $V_i$ in $b_i$-expansion, $W_j$ in $\beta_j$-expansion, and $U$ in continued fraction expansion.

We are now in a position to formulate a corollary of Theorem \ref{Thm1.2:pje_TimesbGaussBeta}, which pertains to joint normality with respect to $b$-expansions, $\beta$-expansions and continued fraction expansion.
\begin{Corollary}[Corollary \ref{Corollary4.4}]
\label{cor:normality:basic}
Let $s, t \in \mathbb{N}$. Let $b_1, \dots, b_s$ be distinct integers with $b_i \geq 2$ for all $i = 1, 2, \dots, s$ and let $\beta_1, \dots, \beta_t$ be distinct non-integers with $\beta_j > 1$ for all $j =1, 2, \dots, t$.
Then almost every $x \in [0,1)$ is jointly $(b_1, \dots, b_s, \beta_1, \dots, \beta_t)$-normal and also, if $\log \beta_j \ne \frac{\pi^2}{6 \log 2}$ for any $j= 1, 2 \dots, t$, almost every $x \in [0,1)$ is jointly $(b_1, \dots, b_s, \beta_1, \dots, \beta_t, CF)$-normal. 
\end{Corollary}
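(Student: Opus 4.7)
The plan is to deduce Corollary \ref{cor:normality:basic} from Theorem \ref{Thm1.2:pje_TimesbGaussBeta} by specializing the pointwise joint ergodicity statement to indicator functions of cylinder sets, and then taking a countable intersection of full-measure sets so as to handle all admissible strings simultaneously.

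The first step is to record the shift-equivariance of digit extraction. If the $b_i$-expansion of $x$ is $(a_n^{(i)})_{n \geq 1}$, then the $b_i$-expansion of $T_{b_i} x$ is $(a_{n+1}^{(i)})_{n \geq 1}$, and the analogous statements hold for $T_{\beta_j}$ acting on the $\beta_j$-expansion and for $T_G$ acting on the continued fraction expansion. In particular,
\begin{equation*}
1_{C_{b_i}(V_i)}(T_{b_i}^{n-1} x) = 1 \iff a_{n+p-1}^{(i)} = v_p^{(i)} \text{ for } 1 \leq p \leq k_i,
\end{equation*}
with similar equivalences for the $\beta_j$-expansion and the continued fraction expansion. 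Reindexing $n \mapsto n-1$, these identities allow one to rewrite
\begin{equation*}
\frac{F_N^1(x, V_1, \ldots, V_s, W_1, \ldots, W_t)}{N} = \frac{1}{N} \sum_{n=0}^{N-1} \prod_{i=1}^s 1_{C_{b_i}(V_i)}(T_{b_i}^n x) \prod_{j=1}^t 1_{C_{\beta_j}(W_j)}(T_{\beta_j}^n x),
\end{equation*}
with a parallel formula for $F_N^2/N$ carrying the extra factor $1_{C_G(U)}(T_G^n x)$.

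The second step is to apply Theorem \ref{Thm1.2:pje_TimesbGaussBeta}. Indicator functions of measurable sets lie in $L^{\infty}(\lambda)$, so part (1) applies with $f_i = 1_{C_{b_i}(V_i)}$ and $g_j = 1_{C_{\beta_j}(W_j)}$, giving convergence of the above average to $\prod_i \lambda(C_{b_i}(V_i)) \prod_j \mu_{\beta_j}(C_{\beta_j}(W_j))$ for $x$ outside some null set. For the CF statement one applies part (2) with the additional choice $f_0 = 1_{C_G(U)}$, which is legitimate precisely because of the entropy hypothesis $\log \beta_j \neq \pi^2/(6 \log 2)$.

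The final step is to pass from a single fixed tuple of strings to all tuples at once. The exceptional null set produced by Theorem \ref{Thm1.2:pje_TimesbGaussBeta} depends a priori on the chosen strings, but the collection of all admissible tuples of finite strings over the alphabets $\{0, \ldots, b_i - 1\}$, $\{0, \ldots, [\beta_j]\}$, and $\mathbb{N}$ is countable. The union of the corresponding null sets is therefore still null, and for every $x$ outside this union the required limit identity holds for every tuple of strings, which is exactly joint $(b_1, \ldots, b_s, \beta_1, \ldots, \beta_t)$-normality (respectively joint $(b_1, \ldots, b_s, \beta_1, \ldots, \beta_t, CF)$-normality). I do not expect a genuine obstacle in this deduction: all the substantive work is absorbed in Theorem \ref{Thm1.2:pje_TimesbGaussBeta}, and the corollary is a translation of its conclusion into the combinatorial language of normal numbers via cylinder sets.
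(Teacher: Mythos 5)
Your proposal is correct and follows exactly the route the paper intends: specialize the pointwise joint ergodicity statement (Theorem \ref{Thm1.2:pje_TimesbGaussBeta}) to indicator functions of cylinder sets, identify the resulting ergodic averages with the digit-counting quantities $F_N^1/N$ and $F_N^2/N$ via the shift action on expansions, and intersect the countably many full-measure sets coming from the countably many tuples of strings. This is the same argument the paper sketches in the Introduction for the case of $b$-expansions and continued fractions, so there is nothing further to add.
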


In this paper we will be mostly working with the class $\mathcal{T}$ which is comprised of ergodic measure preserving systems $(X, \mathcal{B}, \mu, T)$, where $\mu$ is a probability measure on Borel $\sigma$-algebra $\mathcal{B}$ of $X = [0, 1]$ such that for some $c \geq 1$, $\frac{1}{c} \mu (A) \leq \lambda (A) \leq c \mu (A)\,  \text{for all } A \in \mathcal{B}$  and $T: [0,1] \rightarrow [0,1]$ is a $\mu$-ergodic\footnote{Note that since distinct normalized ergodic $T$-invariant measures are mutually singular, in light of our assumptions such $T$-invariant measure $\mu$ is unique.}, piecewise monotone map, which is equipped with a (finite or countable) partition  $\mathcal{A}$ of $[0,1]$ into (open, closed, or half open on either end) intervals with nonempty and disjoint interiors such that for any interval $I \in \mathcal{A}$, the map $T|_{\text{int}(I)}$ is continuous and strictly monotone.
In addition, we will assume that the following conditions are satisfied: 
\begin{enumerate}
\item $\mathcal{B} = \bigvee_{j=0}^{\infty} T^{-j} \sigma( \mathcal{A}) \mod \lambda$, 
where $\sigma (\mathcal{A})$ denotes the sub-$\sigma$-algebra generated by $\mathcal{A}$. 
(i.e. partition $\mathcal{A}$ generates the $\sigma$-algebra $\mathcal{B}$.) 

\item The entropy of the partition $\mathcal{A}$ is finite: $H(\mathcal{A}) := - \sum_{I \in \mathcal{A}} \mu(I) \log \mu (I) < \infty$.  
\end{enumerate}

\begin{Remark}
Kolmogorov-Sinai entropy $h(T)$ is given by 
\[ h(T) = \sup \{h(T, \mathcal{C}) : \mathcal{C} \text{ is a measurable partition with } H(\mathcal{C}) < \infty \}, \]
where $h (T, \mathcal{C}) = \lim\limits_{n \rightarrow \infty} \frac{1}{n} H \left( \bigvee_{j=0}^{n-1} T^{-j} \mathcal{C} \right)$. 
It is well-known (cf. \cite{Pe} or \cite{Walt}) that if $H(\mathcal{C}) < \infty$, then $h(T, \mathcal{C}) \leq H(\mathcal{C})$ and  if  $\mathcal{A}$ generates $\sigma$-algebra $\mathcal{B}$, then $h(T) = h(T, \mathcal{A})$. So it follows that any system belonging to $\mathcal{T}$ has finite entropy. 
\end{Remark}
 
For convenience, we will be denoting members of $\mathcal{T}$ by $(T, \mu, \mathcal{A})$. 
Note that for any system which appears in Theorem \ref{Thm1.2:pje_TimesbGaussBeta}, there exists a natural partition $\mathcal{A}$ for which $(T, \mu, \mathcal{A}) \in \mathcal{T}$. Here is the list of these partitions.
\begin{enumerate}
\item $T_b x = bx \, \bmod 1$ ($b \in \mathbb{N}$, $b \geq 2$):
\begin{equation}
\label{paritition:b}
\mathcal{A}_b = \left\{ \left[0, \frac{1}{b} \right), \cdots, \left[ \frac{b-1}{b}, 1 \right)  \right\}. 
\end{equation}
\item $T_{\beta} x = \beta x \, \bmod  \, 1$ ($\beta >1$, $\beta \notin \mathbb{N}$):
\begin{equation} 
\label{paritition:beta}
\mathcal{A}_{\beta} = \left\{ \left[ 0, \frac{1}{\beta} \right), \cdots, \left[ \frac{[\beta]-1}{\beta}, \frac{[\beta]}{\beta} \right),  \left[ \frac{[\beta]}{\beta}, 1 \right)  \right\}.
\end{equation}
\item $T_G x = \frac{1}{x} \, \bmod \, 1$:
\begin{equation}
\label{paritition:G}
\mathcal{A}_G = \left\{  \left( \frac{1}{n+1}, \frac{1}{n} \right]: n \in \mathbb{N} \right\}.
\end{equation}
\end{enumerate}

We are going now to formulate a general result, which contains Theorem \ref{Thm1.2:pje_TimesbGaussBeta} as a special case.  Before doing so, we have to introduce  two technical conditions regarding the sequence of partitions $\mathcal{A}^n :=  \bigvee\limits_0^{n-1} T^{-j} \mathcal{A}$, $n \in \mathbb{N}$.

The first condition deals with entropy equipartition. The classical Shanon-McMillan-Breiman theorem (it will be discussed in detail in Subsection \ref{subsec3.1}) asserts that  for a ``typical" $A \in \mathcal{A}^n$, $\mu(A)$  is close to $e^{-n h(T)}$, where $h(T)$ is the entropy of $T$. More precisely, 
for any $\epsilon >0$, there is $N = N(\epsilon)$ such that if $n \geq N$, then 
\begin{equation} 
\label{SMB_eqn}
\mu \left( \{ x\in [0,1): x \in A \text{ for some } A \in \mathcal{A}^n \text{ with } e^{- n(h(T) + \epsilon)} \leq \mu(A) \leq e^{- n (h(T) - \epsilon )}    \}  \right) \geq  1 -  \epsilon.
\end{equation}

For our goals, we will need to postulate somewhat sharper condition: 
\begin{Definition}
$(T, \mu, \mathcal{A})$ has  {\em property E} if  for any $\epsilon >0$, there is a positive real number $c_0 = c_0 (\epsilon)$ such that, for any $n \in \mathbb{N}$,
\begin{equation}
\label{intro_eqn:propertyE}  
\mu \left( \{ x\in [0,1): x \in A \text{ for some } A \in \mathcal{A}^n \text{ with } e^{- n(h(T) + \epsilon)} \leq \mu(A) \leq e^{- n (h(T) - \epsilon )}    \}  \right) \geq  1 -  \frac{c_0}{n}.
\end{equation}

\end{Definition}
We observe that partitions $\mathcal{A}_b$, $\mathcal{A}_{\beta}$, $\mathcal{A}_G$ (see \eqref{paritition:b}, \eqref{paritition:beta}, and \eqref{paritition:G}) satisfy condition \eqref{intro_eqn:propertyE}.    
Indeed, since for any integer $b \geq 2$, $h(T_b) = \log b$, condition \eqref{intro_eqn:propertyE} is trivially satisfied for times $b$ maps $T_b$. As for the Gauss transformation $T_G$ and $\beta$-transformations $T_{\beta}$, 
the  details are provided in Subsection \ref{sec:appendix}.

Property M, which is  introduced in the following definition, can be interpreted as a variant of a strong mixing condition, which intertwines $T$-invariant measure $\mu$ with the iterations $\lambda \circ T^{-n}$ of Lebesgue measure $\lambda$ as $n \rightarrow \infty$.

\begin{Definition}
$(T, \mu, \mathcal{A})$ has  {\em property M} 
if there is a sequence $(b_n)_{n=1}^{\infty}$ of non-negative real numbers with $\sum\limits_{n=1}^{\infty} b_n < \infty$  such that for any natural number $l$, for any non-negative integer $n$, if $A \in \sigma (\mathcal{A}^l)$ and  $B \in \mathcal{B}$,
\begin{equation}
\label{propertyB:eqn} 
| \lambda (A \cap T^{-(n+l)} B)  - \lambda(A) \mu(B)| \leq b_n.\footnote{Conditions of this kind appear in the classical work of Khintchine (\cite{Kh}) and Gelfond (\cite{Ge}).}
\end{equation}
\end{Definition}

We will show in Subsection \ref{sec:appendix} below that the systems appearing in Theorem \ref{Thm1.2:pje_TimesbGaussBeta} have property M (in addition to having property E). 

We are now in a position to formulate  the main result of this paper. We will call the phenomenon encompassed in the following theorem {\em pointwise joint ergodicity}. It will be treated in detail in Section \ref{joint ergodicity}.
\begin{Theorem} [Theorem \ref{j.e.:pointwise}]
\label{intro-main-thm-pje}
Let $T_i: [0,1] \rightarrow [0,1], \, 1 \leq i \leq k$, be measurable transformations such that for each $i = 1, 2 \dots, k$, there is a partition $\mathcal{A}_i$ and a $T_i$-invariant ergodic measures $\mu_i$ with $(T_i, \mu_i, \mathcal{A}_i) \in \mathcal{T}$. 
Suppose that for each $i = 1, 2 \dots, k$, $(T_i, \mu_i, \mathcal{A}_i)$ satisfies properties E and M. If entropy values $h(T_1), \cdots, h(T_k)$ are distinct,
then for any $f_1, \dots, f_k \in L^{\infty}(\lambda)$,
\begin{equation}
\label{eq:Thm_Intro_pje} 
\lim_{N \rightarrow \infty} \frac{1}{N} \sum_{n=0}^{N-1}  f_1 ({T_1^n x})   f_2 ({T_2^n x}) \cdots  f_k ({T_k^n x}) = \prod_{i=1}^k \int f_i \, d \mu_i 
\end{equation}
for almost every $x \in [0,1]$.  
\end{Theorem}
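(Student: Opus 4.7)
The strategy is to reduce to an $L^2(\lambda)$ variance estimate built on Properties~E and~M, and then upgrade to almost everywhere convergence via a standard subsequential Borel--Cantelli argument.

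\emph{Step 1 (Reduction to mean-zero step functions).} Write each $f_i = c_i + g_i$ with $c_i = \int f_i\, d\mu_i$ and expand $\prod_{i=1}^{k}\bigl(c_i + g_i(T_i^n x)\bigr)$ into $2^k$ multilinear terms. The all-constants term contributes $\prod_i c_i$, which is the desired limit, while each of the remaining $2^k-1$ terms contains at least one factor $g_{i_0}(T_{i_0}^n x)$ with $\int g_{i_0}\,d\mu_{i_0}=0$. Hence it suffices to show that the averages in \eqref{eq:Thm_Intro_pje} tend to $0$ almost everywhere whenever some $f_{i_0}$ has $\int f_{i_0}\,d\mu_{i_0}=0$. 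Condition (1) in the definition of $\mathcal{T}$ and boundedness in $L^\infty$ let one further assume each $f_i$ is $\sigma(\mathcal{A}_i^{l_i})$-measurable for some $l_i \in \NN$, so only countably many such tuples need be handled.

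\emph{Step 2 (Joint decorrelation lemma).} The analytic core is to prove, for $A_i \in \sigma(\mathcal{A}_i^{l_i})$,
\begin{equation*}
\lambda\Bigl(\bigcap_{i=1}^{k} T_i^{-n} A_i\Bigr) \;=\; \prod_{i=1}^{k}\mu_i(A_i) \;+\; \eta(n), \qquad \eta(n)\to 0,
\end{equation*}
with a quantitative (summable or polynomially decaying) error. Order so that $h(T_1) < h(T_2) < \cdots < h(T_k)$. Property~E excises, at a cost of $O(1/n)$ in measure, the entropically atypical atoms of $\mathcal{A}_j^{n+l_j}$ constituting $T_j^{-n}A_j$, leaving a union of atoms of measure $e^{-(n+l_j)h(T_j)(1+o(1))}$. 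The strict entropy gap then furnishes auxiliary scales $r_j \ll n$ at which a set measurable with respect to $\sigma(\mathcal{A}_{j-1}^{r_j})$ is "coarse" relative to the $T_j$-filtration level that resolves $T_j^{-n}A_j$; this coarseness, combined with Property~M applied to $T_{j-1}$ with $A \in \sigma(\mathcal{A}_{j-1}^{r_j})$ approximating the block of factors for indices $< j$ and $B = \bigcap_{i\ge j} T_i^{-n}A_i$, decouples the $(j-1)$-th factor. Iterating from $j=k$ down to $j=1$ produces the claimed factorization with total error $\sum b_n + \sum_{\epsilon} c_0(\epsilon)/n$ under control.

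\emph{Step 3 (Variance bound and passage to a.e.).} Expand
\begin{equation*}
V_N \;=\; \Bigl\|\tfrac{1}{N}\sum_{n=0}^{N-1}\prod_{i=1}^{k} f_i\circ T_i^{\,n}\Bigr\|_{L^2(\lambda)}^{2} \;=\; \frac{1}{N^2}\sum_{n,m=0}^{N-1}\int \prod_{i=1}^{k}\bigl[f_i(T_i^n x)\,f_i(T_i^m x)\bigr]\,d\lambda(x).
\end{equation*}
Writing the integrand as $\prod_{i} H_i^{(m-n)}(T_i^{\min(n,m)} x)$ with $H_i^{(r)} = f_i\cdot (f_i\circ T_i^{|r|})$ and applying Step~2 to these bounded step functions reduces each inner integral, up to summable error, to $\prod_i \int f_i\,(f_i\circ T_i^{|m-n|})\,d\mu_i$. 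Since $\int f_{i_0}\,d\mu_{i_0}=0$ and $T_{i_0}$ is $\mu_{i_0}$-ergodic, the von Neumann mean ergodic theorem yields $\tfrac{1}{N}\sum_{h=0}^{N-1}\int f_{i_0}\,(f_{i_0}\circ T_{i_0}^h)\,d\mu_{i_0}\to 0$, so the main double-sum contribution is $o(1)$. Tracking rates delivers $V_N = O(N^{-\delta})$ for some $\delta>0$. Chebyshev's inequality and Borel--Cantelli along the subsequence $N_r = \lfloor r^{2/\delta}\rfloor$ then give a.e.\ convergence to $0$ along $(N_r)$; bridging gaps uses the trivial $L^\infty$ bound on $\prod_i f_i$ together with the sparsity of $(N_r)$, and intersecting the countably many null sets (one per allowed tuple of step functions) completes the argument.

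\emph{Main obstacle.} The crux is Step~2. Property~M in isolation is a single-transformation mixing statement and does not directly decouple several distinct $T_i$'s; the strict entropy gap is what makes the multi-scale argument work, because it produces the intermediate levels $r_j$ at which a factor from one transformation is simultaneously coarse enough to be treated as an $\mathcal{A}$-measurable "test set" for Property~M applied to another. The remark following Theorem \ref{Thm1.2:pje_TimesbGaussBeta} regarding the borderline case $\log\beta = \pi^{2}/(6\log 2)$ indicates that this entropy separation is essential, not a mere technical convenience.
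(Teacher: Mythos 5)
Your overall strategy---a second-moment estimate built from Properties E and M and the entropy gap, upgraded to almost everywhere convergence---is the same as the paper's, and Steps 1 and 2 are sound in outline (the paper reduces to indicators of cylinders and invokes its Theorem \ref{lem:pje_gp} for the passage to general $L^\infty$ functions, which is the careful version of your reduction). The gap is in Step 3. You expand the variance over \emph{all} pairs $(m,n)\in[0,N-1]^2$ and claim that writing the integrand as $\prod_i H_i^{(r)}(T_i^{\min(m,n)}x)$ with $H_i^{(r)}=f_i\cdot(f_i\circ T_i^{|r|})$ and applying the Step 2 decorrelation gives a summable error. This fails when $\min(m,n)\ll|m-n|$. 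The function $H_i^{(r)}$ is measurable with respect to $\sigma(\mathcal{A}_i^{l_i+r})$, so the fine scale that must be resolved is about $\max(m,n)+l$, while the coarse scale available for the inductive decoupling (the rank at which a union of good cylinders of $T_{j}$ can be re-expressed, up to small error, as a union of good cylinders of $T_{j+1}$) is at most $\min(m,n)$. The boundary error in that re-expression is of order $s\cdot e^{-\min(m,n)(h_{j+1}-\epsilon)}$, where $s\le c\,e^{(\max(m,n)+l)(h_j+\epsilon)}$ is the number of good cylinders involved; this is small only if $\min(m,n)\,(h_{j+1}-\epsilon)>(\max(m,n)+l)(h_j+\epsilon)$, i.e.\ only if $\max(m,n)/\min(m,n)$ is bounded by roughly $h_{j+1}/h_j$, a ratio that can be arbitrarily close to $1$. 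So the decorrelation is simply not available for most pairs in your double sum, and $V_N=O(N^{-\delta})$ does not follow from the stated tools.

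The paper's fix is structural: it never estimates the full variance directly. It partitions $[M,N]$ into $O(\log(N-M))$ multiplicatively short blocks $[M_1,M_2]$ with $M_2\le(1+\delta)M_1$, where $\delta$ is chosen (together with $\epsilon$ and $\gamma$) so that $h_{j+1}-(1+\delta)h_j-(2+\delta)\epsilon\ge\gamma$ for all $j$; within such a block every pair $(m,n)$ with $|m-n|\ge l$ satisfies the needed scale comparability, and the block variance is $O(M_2-M_1)$. Assembling the blocks yields only $O((N-M)\log(N-M))$ for the variance of the sum over $[M,N]$, and the passage to a.e.\ convergence is then done by the quantitative lemma of Harman (Lemma \ref{Lem:a.e.conv}) with $v_n=\max(1,\log n)$, which absorbs the logarithmic loss; your Chebyshev--Borel--Cantelli scheme with bridging would also work from $V_N=O(\log N/N)$, but you cannot reach that bound without the block decomposition. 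To repair Step 3, assert the two-point decorrelation (the analogue of the paper's Claim 1) only for $m,n$ lying in a common block with $M_2\le(1+\delta)M_1$, make the choice of $\delta$ in terms of the entropy gaps explicit, and then sum over blocks before applying the a.e.\ convergence criterion.
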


\begin{Remark}
While the motivation for establishing Theorem \ref{intro-main-thm-pje} comes from the question related to joint normality, this theorem can also be viewed as a refinement of some results obtained in \cite{BeSo}, which pertain to $L^2$-joint ergodicity.
It is worth noting that the $L^2$-results obtained in \cite{BeSo} require less restrictive conditions on $T_i$ and, moreover, involve uniform Ces\`aro averages, i.e., have the form 
$$\lim\limits_{N-M \rightarrow \infty} \frac{1}{N-M} \sum_{n=M+1}^{N}   f_1 ({T_1^n x})   f_2 ({T_2^n x}) \cdots  f_k ({T_k^n x}) = \prod_{i=1}^k \int f_i \, d \mu_i.$$
\end{Remark}

 As it was already observed in the work of R\'{e}nyi (\cite{Re}) and Parry (\cite{Pa2}), ergodic theory provides a natural unifying framework for dealing with various representations of numbers. As we will see in Section \ref{Sec:FRep}, ergodic approach allows also to prove a rather general theorem which deals with various equivalent forms of normality and has as a quite special corollary the following theorem which summarizes various classical results pertaining to base $b$ normality.
 
\begin{Theorem} 
\label{intro:equiv:b-normal}
Let $b \geq 2$ be an integer. Let $x \in [0,1)$.
The following statements are equivalent:
\begin{enumerate}[(i)]
\item $x$ is normal in base $b$.
\item  $(b^n x)_{n \in \mathbb{N}}$ is uniformly distributed $\bmod \, 1$. 
\item For any fixed $m \geq 2$, $x$ is normal in base $b^m$. 
\item $b^k x$ is simply normal in base $b^m$ for all $k \geq 0$ and $m \geq 1$.
\item $x$ is simply normal in all of the bases $b, b^2, b^3, \cdots.$ 
\item There exists a sequence $(m_l)_{l \in \mathbb{N}}$ with $m_1 < m_2 < \cdots$ such that $x$ is simply normal in all of the bases $b^{m_l}$. 
\end{enumerate}
\end{Theorem}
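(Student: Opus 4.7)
The plan is to prove the cyclic chain of implications
\[
\text{(i)} \Leftrightarrow \text{(ii)}, \qquad \text{(i)} \Rightarrow \text{(iii)} \Rightarrow \text{(iv)} \Rightarrow \text{(v)} \Rightarrow \text{(vi)} \Rightarrow \text{(i)},
\]
which closes the loop and forces all six conditions to be equivalent. The equivalence (i)$\Leftrightarrow$(ii) follows directly from Weyl's criterion, since $b$-normality of $x$ is exactly equidistribution of $(T_b^n x)_{n \geq 0}$ on $[0,1)$ under Lebesgue measure. The implication (i)$\Rightarrow$(iii) is Wall's classical theorem, and it also drops out of the general equivalence theorem for abstract $\mathcal{A}$-normality proved in Section~\ref{Sec:FRep}, applied to $(T_b, \lambda, \mathcal{A}_b)$ jointly with its $m$-th iterate $(T_{b^m}, \lambda, \mathcal{A}_{b^m})$, using $\bigvee_{j=0}^{m-1} T_b^{-j}\mathcal{A}_b = \mathcal{A}_{b^m}$. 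For (iii)$\Rightarrow$(iv) one notes that condition (iii) is shift-invariant: if $x$ is $b^m$-normal, then $(T_b^{mn+k} x) = T_b^k(T_b^{mn} x)$ equidistributes under the Lebesgue-preserving map $T_b^k$, so $T_b^k x$ is $b^m$-normal; hence $T_b^k x$ satisfies (iii), which yields simple normality in every base $b^m$, $m \geq 1$ (the case $m=1$ being deduced from $b^2$-normality by summing frequencies of pairs). The implications (iv)$\Rightarrow$(v) (set $k = 0$) and (v)$\Rightarrow$(vi) (set $m_l = l$) are immediate.

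The substantive content lies in (vi)$\Rightarrow$(i). Fix a $b$-block $V = (v_1, \ldots, v_L)$ of length $L$; we show that its frequency in the $b$-expansion of $x$ tends to $b^{-L}$. Pick $m = m_l > L$ from the given subsequence, and partition the first $Nm$ $b$-digits of $x$ into $N$ consecutive blocks of length $m$, producing the first $N$ $b^m$-digits $D_1, \ldots, D_N$ of $x$. Each occurrence of $V$ at a position in $\{1, \ldots, Nm - L + 1\}$ either lies entirely inside some $D_j$ or straddles a boundary between two consecutive $D_j$'s; the number of straddling occurrences is at most $N(L-1)$. For each $D \in \{0, \ldots, b^m - 1\}$, let $c(D)$ count the internal offsets $i \in \{0, \ldots, m - L\}$ at which $V$ appears inside $D$; a direct enumeration yields $\sum_D c(D) = (m - L + 1) b^{m - L}$. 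Simple normality of $x$ in base $b^m$ then gives $\tfrac{1}{N} \sum_{j=1}^N c(D_j) \to b^{-m} \sum_D c(D) = (m - L + 1)\, b^{-L}$ as $N \to \infty$. Dividing by $m$ and combining with the straddling bound, the frequency of $V$ among the first $Nm$ positions is asymptotically sandwiched between $\tfrac{m - L + 1}{m b^L}$ and $\tfrac{m - L + 1}{m b^L} + \tfrac{L - 1}{m}$; letting $m = m_l \to \infty$ drives both bounds to $b^{-L}$, and a standard estimate handles $M$ not of the form $Nm$.

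The main obstacle is this (vi)$\Rightarrow$(i) step, specifically the management of the double limit (first $N \to \infty$ at fixed $m$, then $m = m_l \to \infty$): the freedom to choose $m = m_l$ arbitrarily large is precisely what is needed to suppress the $O(L/m)$ boundary contribution, so that simple normality along a sparse sequence of bases bootstraps to full base-$b$ normality. All of the other implications in the cycle are either classical or immediate consequences of the abstract equivalence framework developed in Section~\ref{Sec:FRep}.
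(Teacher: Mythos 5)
Your proposal is correct and follows essentially the same route as the paper: the paper obtains Theorem \ref{intro:equiv:b-normal} as a special case of Theorem \ref{equiv:normal:F-representation}, whose proof runs through the same cycle of implications, relies on the same Pyatetskii-Shapiro-type genericity criterion to pass from base $b$ to base $b^m$ in (i)$\Rightarrow$(iii), and uses the same block-counting argument for (vi)$\Rightarrow$(i). Your (vi)$\Rightarrow$(i) is the finite-alphabet specialization of the paper's argument (your $c(D)$ is the paper's $k_i$, and $\sum_D c(D)=(m-L+1)b^{m-L}$ is its integral identity), which lets you skip the $\epsilon$-truncation the paper needs for countably infinite alphabets.
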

\begin{Remark}
Borel's original characterization of base $b$ normality is (iv) (see \cite{Bor}). The equivalence of (i) and (iv) was shown by Niven and Zuckerman (\cite{NiZu}). 
The characterizations (v) and (vi) were established, respectively, by Pillai (\cite{Pil}) and Long (\cite{Lo}).  For more details, see Chapter 1, Section 8 in \cite{KN} and Appendix in \cite{BeDoMi}.
\end{Remark}

A general ergodic form of Theorem \ref{intro:equiv:b-normal} involves systems belonging to a rather large family of measure preserving systems which we will denote by $\tilde{\mathcal{T}}$ and which is defined by relaxing some of the conditions appearing in the definition of class $\mathcal{T}$. 
Class $\tilde{\mathcal{T}}$ is comprised of ergodic measure preserving systems $(X, \mathcal{B}, \mu, T)$, where $X = [0, 1]$, $\mathcal{B}$ is the Borel $\sigma$-algebra,
$\mu$ is a probability measure, which is equivalent to Lebesgue measure $\lambda$, and 
 $T: [0,1] \rightarrow [0,1]$ is a $\mu$-ergodic\footnote{Note that in light of our assumptions such $T$-invariant measure $\mu$ is unique.} map, which is equipped with a (finite or countable) partition  $\mathcal{A}$ of $[0,1]$ into (open, closed, or half open on either end) intervals with nonempty and disjoint interiors such that for any interval $I \in \mathcal{A}$, the map $T|_{\text{int}(I)}$ is continuous and strictly monotone.
In addition, we will assume that $\mathcal{B} = \bigvee_{j=0}^{\infty} T^{-j} \sigma( \mathcal{A}) \mod \lambda$, 
where $\sigma (\mathcal{A})$ denotes the sub-$\sigma$-algebra generated by $\mathcal{A}$ 
(i.e. partition $\mathcal{A}$ generates the $\sigma$-algebra $\mathcal{B}$.) 

As before, we will find it convenient to denote members of $\tilde{\mathcal{T}}$ by $(T, \mu, \mathcal{A})$. 

To formulate a general ergodic form of Theorem \ref{intro:equiv:b-normal} we utilize the notion of F-representation which was introduced by E. A. Robinson, Jr. in \cite{Rob}. 

Let  $(T, \mu, \mathcal{A}) \in \tilde{\mathcal{T}}$, where $\mathcal{A} = \{I_j: j \in D\}$, $D$ being a finite or countably infinite index set.
For $x \in X = [0,1)$, introduce a symbolic representation  $(r_n(x))_{n=1}^{\infty}$ by putting $r_n = r_n(x) = j$ if $T^{n-1} x \in I_j$. 
In \cite{Rob}, this representation is called F-representations and it is shown that if $T$ is topologically transitive (meaning that for  some $x \in X$ the forward orbit $\{T^n x: n \geq 0\}$ is dense in X) then the F-representation $(r_n(x))_{n=1}^{\infty}$ is unique for almost every $x$.
Note that if $(T, \mu, \mathcal{A}) \in \tilde{\mathcal{T}}$, then $T$ is transitive, since $T$ is ergodic with respect to $\mu$ (and $\mu$ is equivalent to $\lambda$). 

Write $\mathbf{T} = (T, \mu, \mathcal{A})$.
For a string $S = (v_1, \dots, v_k) \in D^k$, define the cylinder set $C_{\mathbf{T}}(S) (=  [v_1, \dots, v_k]_{\mathbf{T}}) := \{y \in [0,1): r_1(y) = v_1, \dots, r_n(y) =v_n\}$. 
Note that $\{  C_{\mathbf{T}}(S) : S \in D^k \} = \mathcal{A}^k  \, (\bmod \, \lambda)$ for any $k \in \mathbb{N}$.
We say that $x$ is 
\begin{itemize}
\item  {\em simply ${\mathbf{T}}$-normal} if for any $v$ in $D$, 
\[ \lim_{N \rightarrow \infty} \frac{\# \{ 1 \leq n \leq N : r_n = v \}}{N} =\mu([v]_{\mathbf{T}}).\]
\item {\em ${\mathbf{T}} $-normal} if for any $ k \in \mathbb{N}$ and any finite string $S = (v_1, \dots, v_k)$ in $D^k$, 
\[\lim_{N \rightarrow \infty} \frac{\#\{1 \leq n \leq N: r_n=v_1, \dots, r_{n+k-1} = v_k\}}{N} = \mu(C_{\mathbf{T}}(S)).\]  
\end{itemize}
For $m \in \mathbb{N}$, let ${\mathbf{T}_m} = (T^m, \mu, \mathcal{A}^m) \in \mathcal{T}$. 
Note that $x$ is {\em simply ${\mathbf{T}_m}$-normal} if $S = (v_1, \dots, v_m)$ in $D^m$, 
\[ \lim_{N \rightarrow \infty} \frac{\# \{ 1 \leq n \leq N : r_{m(n-1) +1} = v_1, \dots, r_{mn} = v_m \}}{N} =\mu(C_{\mathbf{T}}(S)).\]

Here is now a general ergodic form of Theorem \ref{intro:equiv:b-normal}.
\begin{Theorem}[Theorem \ref{equiv:normal:F-representation}]
\label{Intro:equiv:normal:F-representation} 
Let ${\mathbf{T}} = (T, \mu, \mathcal{A}) \in \tilde{\mathcal{T}}$ and assume that $T$ is totally ergodic.\footnote{ A measure preserving transformation $T$ is called {\em totally ergodic} if $T^k$ is ergodic for all $k \in \mathbb{N}$.} Let $x \in [0,1]$.
The following statements are equivalent.
\begin{enumerate}[(i)]
\item $x$ is ${\mathbf{T}}$-normal. 
\item $(T^n x)$ is equidistributed in $[0,1]$ with respect to $\mu$, that is, for any continuous function $f:[0,1] \rightarrow \mathbb{R}$,
\[ \lim_{N \rightarrow \infty} \frac{1}{N} \sum_{n=0}^{N-1} f(T^n x) = \int f \, d \mu.\]
\item For any $m \geq 2$, $x$ is ${\mathbf{T}}_m$-normal. 
\item $T^k x$ is simply ${\mathbf{T}}_m$-normal for all $k \geq 0$ and $m \geq 1$.
\item $x$ is simply ${\mathbf{T}}_m$-normal for all $m \geq 1$. 
\item There exists a sequence $(m_l)_{l \in \mathbb{N}}$ with $m_1 < m_2 < \cdots$ such that $x$ is simply ${\mathbf{T}}_{m_l}$-normal for any $l \in \mathbb{N}$.
\end{enumerate}
\end{Theorem}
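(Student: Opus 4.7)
My plan is to verify the six equivalences by traversing the cycle
\[
\text{(i)} \Rightarrow \text{(ii)} \Rightarrow \text{(iv)} \Rightarrow \text{(v)} \Rightarrow \text{(vi)} \Rightarrow \text{(i)},
\]
harvesting (iii) as a parallel consequence of the mechanism used for (ii) $\Rightarrow$ (iv). For (i) $\Leftrightarrow$ (ii) I would use that the cylinders $C_{\mathbf{T}}(S)$ are intervals, their endpoints form a dense $\mu$-null subset of $[0,1]$, and they generate $\mathcal{B}$. Writing $\nu_N := \frac{1}{N}\sum_{n=0}^{N-1}\delta_{T^n x}$, condition (i) is exactly $\nu_N(C_{\mathbf{T}}(S)) \to \mu(C_{\mathbf{T}}(S))$ for every cylinder. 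This gives convergence of distribution functions $F_N(t) \to F(t)$ at every cylinder endpoint (a dense set of continuity points of $F$, since $\mu$ has no atoms), and monotonicity upgrades this to weak convergence $\nu_N \to \mu$, i.e.\ (ii); the converse uses $\mu$-continuity of cylinder indicators to recover (i).

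The central technical step is (ii) $\Rightarrow$ (iv), where total ergodicity enters. It is convenient to pass to the symbolic coding $\phi:[0,1) \to D^{\mathbb{N}}$, which intertwines $T$ with the shift $\sigma$, pushes $\mu$ to $\tilde{\mu}:=\phi_\ast\mu$, and carries cylinders to cylinders; crucially $\sigma$ is continuous on $D^{\mathbb{N}}$. Let $\tilde{\eta}$ be any weak subsequential limit of $\frac{1}{N}\sum_{n=0}^{N-1}\delta_{\sigma^{mn}\phi(x)}$; continuity of $\sigma$ makes $\tilde{\eta}$ a $\sigma^m$-invariant probability measure, and (ii) forces
\[
\frac{1}{m}\sum_{r=0}^{m-1} \sigma^r_\ast \tilde{\eta} \;=\; \tilde{\mu}.
\]
Decompose $\tilde{\eta} = \int \tilde{\eta}_\alpha\, d\pi(\alpha)$ into $\sigma^m$-ergodic components. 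Each $\sigma$-orbit average $\frac{1}{m}\sum_{r} \sigma^r_\ast \tilde{\eta}_\alpha$ is $\sigma$-ergodic (any $\sigma$-invariant set is $\sigma^m$-invariant), and integration yields $\tilde{\mu}$, which is itself $\sigma$-ergodic; uniqueness of the $\sigma$-ergodic decomposition of $\tilde{\mu}$ forces these averages to equal $\tilde{\mu}$ $\pi$-a.e. Total ergodicity makes $\tilde{\mu}$ extremal among $\sigma^m$-invariant measures, so the convex combination $\frac{1}{m}\sum_r \sigma^r_\ast \tilde{\eta}_\alpha = \tilde{\mu}$ of the $\sigma^m$-ergodic measures $\sigma^r_\ast \tilde{\eta}_\alpha$ can occur only if $\sigma^r_\ast \tilde{\eta}_\alpha = \tilde{\mu}$ for every $r$, forcing $\tilde{\eta}_\alpha = \tilde{\mu}$ and hence $\tilde{\eta} = \tilde{\mu}$. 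Subsequential limits therefore all agree and $(\sigma^{mn}\phi(x)) \to \tilde{\mu}$ weakly. Reading this on length-$m$ cylinders gives simply $\mathbf{T}_m$-normality at $x$; running the same argument with $\sigma^k\phi(x)$ replacing $\phi(x)$ yields (iv), and reading length-$mK$ cylinders yields (iii). The implications (iv) $\Rightarrow$ (v) $\Rightarrow$ (vi) are immediate (take $k=0$ and $m_l = l$).

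For (vi) $\Rightarrow$ (i) I would use a shift-blocking count. Fix $S = (v_1,\ldots,v_k) \in D^k$ and define $\psi: D^{m_l} \to \mathbb{N}$ to count the number of occurrences of $S$ as a substring in a length-$m_l$ word. Simply $\mathbf{T}_{m_l}$-normality, extended by linearity over length-$m_l$ cylinders, gives
\[
\frac{1}{N}\sum_{n=0}^{N-1} \psi\bigl(r_{m_l n+1}(x), \ldots, r_{m_l(n+1)}(x)\bigr) \longrightarrow \int \psi\, d\mu \;=\; (m_l - k + 1)\,\mu(C_{\mathbf{T}}(S)),
\]
where the right side tallies the $m_l - k + 1$ starting positions of $S$ inside a length-$m_l$ window via shift-invariance of $\mu$. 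The left side differs from the total count of $S$ in $(r_1, \ldots, r_{m_l N})$ by at most $N(k-1)$ (occurrences straddling block boundaries), so dividing by $m_l N$ produces a frequency within $(k-1)/m_l$ of $\mu(C_{\mathbf{T}}(S))$. Sending $l\to\infty$ kills the block-boundary defect, and a standard reduction $N = m_l \lfloor N/m_l \rfloor + O(m_l)$ extends the conclusion to arbitrary $N$ at cost $O(m_l/N)$, establishing (i).

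I expect the main obstacle to be the (ii) $\Rightarrow$ (iv) step, where total ergodicity (a purely measure-theoretic hypothesis) must be converted into a pointwise statement along a specific generic orbit; the weak-limit / $\sigma^m$-ergodic-decomposition argument above is the bridge between these viewpoints. Passing to the symbolic model to secure continuity of the dynamics is essential, since the underlying map $T$ (e.g.\ the Gauss map) has discontinuities that obstruct a direct argument in $[0,1]$. Without total ergodicity the implication genuinely fails: any nontrivial periodic factor produces $\mu$-generic $x$ for which $(T^{mn}x)$ does not equidistribute $\mu$.
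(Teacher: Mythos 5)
Your proposal is correct in substance, but its central step travels a genuinely different road from the paper's. The paper never leaves $[0,1]$: it proves a dynamical Pyatetskii--Shapiro criterion (Lemma \ref{criteria-generic}: if $\limsup_N \frac{1}{N}\sum_{n<N} f(T^nx) \le C\int f\,d\mu$ for all non-negative continuous $f$, then $x$ is $(T,\mu)$-generic, shown by checking that any weak* limit $\nu$ of the empirical measures is $T$-invariant and satisfies $\nu \ll \mu$, whence $\nu=\mu$ by ergodicity and the pointwise ergodic theorem), and then gets the equivalence of $(T,\mu)$- and $(T^m,\mu)$-genericity from the elementary domination $\frac{1}{N}\sum_{n<N} f(T^{mn}x) \le m\cdot\frac{1}{mN}\sum_{n<mN} f(T^nx)$ and its converse; $\mu$-continuity of $T$ and Riemann integrability stand in for the continuity you obtain by passing to the symbolic model. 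Your route instead codes into $D^{\mathbb N}$ and identifies weak* limits of $\frac{1}{N}\sum_{n<N}\delta_{\sigma^{mn}\phi(x)}$ via the relation $\frac{1}{m}\sum_{r}\sigma^r_*\tilde\eta=\tilde\mu$, the $\sigma^m$-ergodic decomposition, and extremality of $\tilde\mu$ in the simplex of $\sigma^m$-invariant measures. This makes the role of total ergodicity more transparent, at the cost of two points you should not gloss over when $D$ is countably infinite (the Gauss map is the motivating case): first, $D^{\mathbb N}$ is not compact, so the existence of weak subsequential limits requires tightness of $(\frac{1}{N}\sum_{n<N}\delta_{\sigma^{mn}\phi(x)})_N$ --- this does follow, e.g.\ from the domination of these measures by $m$ times the full empirical measures, which converge and hence are tight, but it must be said; second, transferring (ii) to the symbolic model requires first upgrading equidistribution against continuous functions on $[0,1]$ to cylinder indicators (i.e.\ essentially invoking (i)), since $\phi$ is not continuous. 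Likewise, in (vi) $\Rightarrow$ (i) your phrase ``extended by linearity over length-$m_l$ cylinders'' hides the only delicate point for an infinite alphabet: $\psi$ is a countably infinite combination $\sum_i k_i 1_{C(A_i)}$, and interchanging the limit in $N$ with this sum needs either the paper's truncation at $M$ terms with $\sum_{i>M}\mu(C(A_i))<\epsilon$ or a two-sided Fatou argument using $0\le k_i\le m_l-k+1$; the remaining block-boundary bookkeeping in your argument matches the paper's. None of these is a fatal gap, but each deserves a sentence in a complete write-up.
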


One also has an analogue of Theorem  \ref{Intro:equiv:normal:F-representation} for the notion of joint normality. Before formulating this result, we have to introduce some definitions.

Let ${\mathbf{T}}^{(i)} = (T_i, \mu_i, \mathcal{A}_i) \in \tilde{\mathcal{T}}, \, i = 1, 2 \dots, k$, where $\mathcal{A}_i = \{I_{j}^{(i)}: j \in D_i\}$ and  $D_i$ is an index set, which may be finite or countably infinite.   
Let $(r_n^{(i)}(x))_{n=1}^{\infty}$ be F-representation of $x$ corresponding to $\mathbf{T}^{(i)}$, that is, $r_n^{(i)} = r_n^{(i)}(x) = j$ if $T_i^{n-1} x \in I_{j}^{(i)}$. 
We say that $x$ is 
\begin{itemize}
\item {\em jointly simply $({\mathbf{T}}^{(1)}, \dots, {\mathbf{T}}^{(k)})$-normal} if  for any $v_1, \dots, v_{k}$, where $v_i \in D_i$ $(1 \leq i \leq k)$,
\[\lim_{N \rightarrow \infty} \frac{\#\{1 \leq n \leq N: r_{n}^{(1)}=v_1, \dots, r_n^{(k)} = v_k\}}{N} = \prod_{i=1}^k \mu_i ([v_i]_{{\mathbf{T}}_i}).\]
\item {\em jointly $({\mathbf{T}}^{(1)}, \dots, {\mathbf{T}}^{(k)})$-normal} if for any finite strings $S_i = (v_1^{(i)}, \dots, v_{m_i}^{(i)})$ $(1 \leq i \leq k)$,
\[\lim_{N \rightarrow \infty} \frac{\#\{1 \leq n \leq N: r_n^{(i)}=v_1^{(i)}, \dots, r_{n+m_i-1}^{(i)} = v_{m_i}^{(i)} \text{ for } 1 \leq i \leq k \}}{N} = \prod_{i=1}^k \mu_i(C_{{\mathbf{T}}_i}(S_i)).\]  
\end{itemize}
As before, we will use the notation ${\mathbf{T}}_m^{(i)} = (T_i^m, \mu_i, \mathcal{A}_i^m)$. 
Note that $x$ is {\em jointly simply $({\mathbf{T}}_m^{(1)}, \dots, {\mathbf{T}}_m^{(k)})$-normal} if  for any finite strings $S_i = (v_1^{(i)}, \dots, v_{m}^{(i)}) \in D_i^{m}$ $(1 \leq i \leq k)$,
\[\lim_{N \rightarrow \infty} \frac{\#\{1 \leq n \leq N: r_{m (n-1) +1}^{(i)}=v_1^{(i)}, \dots, r_{ mn }^{(i)} = v_{m}^{(i)} \text{ for } 1 \leq i \leq k \}}{N} = \prod_{i=1}^k \mu_i(C_{{\mathbf{T}}_i}(S_i)).\]

\begin{Theorem}[Theorem \ref{equiv:jointnormal:F-representation}]
\label{intro:equiv:jointnormal:F-representation} 
Let ${\mathbf{T}}^{(i)}= (T_i, \mu_i, \mathcal{A}_i) \in \tilde{\mathcal{T}}, \, i = 1, 2 \dots, k$ and assume that $T_1 \times \cdots \times T_k$ is totally ergodic with respect to $\mu_1 \times \cdots \times \mu_k$.  Let $x \in [0,1]$.
The following statements are equivalent.
\begin{enumerate}[(i)]
\item $x$ is jointly $({\mathbf{T}}^{(1)}, \dots, {\mathbf{T}}^{(k)})$-normal. 
\item$(T_1^n x, \dots, T_k^n x)$ is equidistributed in $[0,1]^k$ with respect to $\mu_1 \times \cdots \times \mu_k$: for any continuous function $f$ on $[0,1]^k \rightarrow \mathbb{R}$,
\[ \lim_{N \rightarrow \infty} \frac{1}{N} \sum_{n=0}^{N-1} f(T_1^n x, \dots, T_k^n x) = \int f \, d (\mu_1 \times \cdots \times  \mu_k).\]
\item For any $m \geq 2$, $x$ is jointly $({\mathbf{T}}_m^{(1)}, \dots, {\mathbf{T}}_m^{(k)})$-normal. 
\item$x$ is jointly simply $({\mathbf{T}}_m^{(1)}, \dots, {\mathbf{T}}_m^{(k)})$-normal for all $m \in \mathbb{N}$. 
\item There exists a sequence $(m_l)_{l \in \mathbb{N}}$ with $m_1 < m_2 < \cdots$ such that $x$ is jointly simply $({\mathbf{T}}_{m_l}^{(1)}, \dots, {\mathbf{T}}_{m_l}^{(k)})$-normal for any $l \in \mathbb{N}$.
\end{enumerate}
\end{Theorem}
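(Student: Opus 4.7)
The plan is to establish the cycle $(i) \Leftrightarrow (ii)$, $(ii) \Rightarrow (iii) \Rightarrow (v) \Rightarrow (i)$, and $(i) \Rightarrow (iv) \Rightarrow (v)$, thereby linking all five statements. Throughout, let $S = T_1 \times \cdots \times T_k$ act on $[0,1]^k$ preserving the product measure $\mu = \mu_1 \times \cdots \times \mu_k$, and write $\Phi_n = S^n y$ for $y = (x, \ldots, x)$. The family $\mathcal{C}$ of cylinder products $\prod_{i=1}^k C_{\mathbf{T}^{(i)}}(S_i)$ is a $\pi$-system generating the Borel $\sigma$-algebra of $[0,1]^k$ (since each $\mathcal{A}_i$ generates $\mathcal{B}$), consisting of $\mu$-continuity sets (their boundaries lie in finite unions of coordinate hyperplanes and each $\mu_i$ is atomless). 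Portmanteau therefore identifies joint $\mathbf{T}$-normality in (i) with weak-$*$ convergence of the empirical measures $\frac{1}{N}\sum_{n=0}^{N-1} \delta_{\Phi_n}$ to $\mu$, establishing $(i) \Leftrightarrow (ii)$.

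For $(ii) \Rightarrow (iii)$, applying this equivalence to the product system $\bigotimes_i \mathbf{T}_m^{(i)}$ reduces the task to equidistribution of the $S^m$-orbit of $y$ with respect to $\mu$. Set $\sigma_{N,j} = \frac{1}{N}\sum_{n=0}^{N-1} \delta_{S^{mn+j} y}$ for $0 \leq j < m$. By (ii), $\frac{1}{m}\sum_j \sigma_{N,j}$ coincides with the full Ces\`aro average $\frac{1}{mN}\sum_{n'=0}^{mN-1} \delta_{S^{n'} y}$, which tends to $\mu$ weakly. Pass to a subsequence $N_l$ along which each $\sigma_{N_l, j}$ converges weakly to some probability measure $\nu_j$; then $\mu = \frac{1}{m}\sum_j \nu_j$, which forces $\nu_j \leq m\mu$ and hence $\nu_j \ll \mu$. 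Since the discontinuity set of $S^m$ is $\mu$-null and therefore $\nu_j$-null, Portmanteau applied to the bounded, $\nu_j$-a.e.\ continuous function $f \circ S^m$, combined with the combinatorial identity $S^m_* \sigma_{N, j} - \sigma_{N, j} = \frac{1}{N}(\delta_{S^{mN+j} y} - \delta_{S^j y}) \to 0$, yields $S^m_* \nu_j = \nu_j$. Total ergodicity of $S$ with respect to $\mu$ makes $\mu$ an extreme point of the simplex of $S^m$-invariant probability measures, so the convex decomposition $\mu = \frac{1}{m}\sum_j \nu_j$ forces $\nu_j = \mu$ for every $j$; taking $j = 0$ gives $\sigma_{N, 0} \to \mu$.

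$(iii) \Rightarrow (v)$ is immediate by taking $m_l = l+1$, since jointly simply $\mathbf{T}_{m_l}$-normality is the length-one-string specialization of joint $\mathbf{T}_{m_l}$-normality. For $(v) \Rightarrow (i)$, given strings $I^{(i)}$ of length $r_i$ in $D_i$, set $r = \max_i r_i$ and pick $m_l \geq r$. For each common safe offset $p \in \{1, \ldots, m_l - r + 1\}$, summing the jointly simply $\mathbf{T}_{m_l}$-normality assertion over all length-$m_l$ string-tuples $(W^{(1)}, \ldots, W^{(k)})$ whose $i$-th coordinate extends $I^{(i)}$ at position $p$ gives, using independence of the per-coordinate sums and $T_i$-invariance of $\mu_i$, the asymptotic density $\prod_i \mu_i(C_{\mathbf{T}^{(i)}}(I^{(i)}))$ for joint occurrences of $(I^{(1)}, \ldots, I^{(k)})$ at positions $\equiv p \pmod{m_l}$. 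Summing over safe $p$ captures a $(m_l - r + 1)/m_l$-fraction of positions in $[1, N]$; the remaining $(r-1)/m_l$-fraction contributes between $0$ and $(r-1)/m_l$ to the density. Sandwiching $\liminf$ and $\limsup$ of the overall density between $(1 - (r-1)/m_l)\prod_i \mu_i(C_{\mathbf{T}^{(i)}}(I^{(i)}))$ and $(1 - (r-1)/m_l)\prod_i \mu_i(C_{\mathbf{T}^{(i)}}(I^{(i)})) + (r-1)/m_l$, and sending $m_l \to \infty$, yields (i). Finally $(i) \Rightarrow (iv)$: the $m = 1$ case is the length-one specialization of (i), and for $m \geq 2$ we invoke $(i) \Rightarrow (ii) \Rightarrow (iii)$ and restrict to length-one strings in $\mathcal{A}_i^m$; $(iv) \Rightarrow (v)$ follows with $m_l = l$.

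The main obstacle I expect is the step $(ii) \Rightarrow (iii)$: establishing $S^m$-invariance of the weak-$*$ limits $\nu_j$ in the absence of continuity of $S^m$. The saving observation is that the convex decomposition $\mu = \frac{1}{m}\sum_j \nu_j$ automatically produces the absolute continuity $\nu_j \ll \mu$, supplying just enough regularity to apply Portmanteau to $f \circ S^m$. The combinatorics in $(v) \Rightarrow (i)$ is technically routine but needs careful bookkeeping around safe/unsafe block positions when the $r_i$ differ across coordinates.
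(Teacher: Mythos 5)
Your argument is correct, and its overall architecture matches the paper's, which proves this theorem by declaring it ``totally analogous'' to the scalar case and resting on two lemmas for the product system $S=T_1\times\cdots\times T_k$: a Pyatetskii-Shapiro--type genericity criterion (if $\limsup_N \frac{1}{N}\sum_{n<N}\prod_i f_i(T_i^nx)\le C\prod_i\int f_i\,d\mu_i$ for all nonnegative continuous $f_i$, then $(x,\dots,x)$ is generic) and the statement that, for totally ergodic admissible systems, genericity for $S$ and for $S^m$ are equivalent. Where you genuinely diverge is in the proof of the passage to powers. The paper bounds the $S^m$-empirical averages by $m$ times the $S$-averages and feeds this into the criterion, whose own proof extracts a weak* limit $\nu\le C\mu$, shows $\nu$ is invariant using $\mu$-continuity of the map, and identifies $\nu=\mu$ via the pointwise ergodic theorem and dominated convergence. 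You instead decompose $\mu=\frac{1}{m}\sum_j\nu_j$ into the $m$ offset-subsequence limits, get $\nu_j\le m\mu$ for free from positivity, prove $S^m$-invariance of each $\nu_j$ by the same $\mu$-continuity device, and conclude $\nu_j=\mu$ from the extreme-point characterization of the ergodic measure $\mu$ among $S^m$-invariant measures. The two arguments are close cousins --- both live or die on $\nu\ll\mu$ together with $\mu$-a.e.\ continuity of $S^m$ --- but yours is self-contained and avoids isolating the criterion as a separate lemma, at the cost of giving only the direction you need (the paper's lemma is an equivalence; your implication cycle does not require the converse, so this is harmless).

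Two spots deserve more care than you give them. In $(v)\Rightarrow(i)$, when some alphabet $D_i$ is countably infinite (e.g.\ the Gauss map), ``summing the simple-normality assertion over all length-$m_l$ tuples extending $I^{(i)}$ at position $p$'' interchanges a limit in $N$ with an infinite sum; this is exactly where the paper's corresponding argument (the implication (vi)$\Rightarrow$(i) in the scalar theorem) spends most of its effort, truncating to finitely many tuples capturing mass $1-\epsilon$ and controlling the tail by the fact that the total occupation over all tuples converges to the full mass --- a Scheff\'e-type argument. Similarly, in $(i)\Rightarrow(ii)$ you need the cylinder products to be not merely $\mu$-continuity sets but a convergence-determining class; this uses that the partitions $\mathcal{A}_i$ generate $\mathcal{B}$ mod $\lambda$, so the cylinder atoms refine to points almost everywhere (the paper hides this in ``the standard approximation argument''). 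Neither issue is fatal, but both should be written out.
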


\begin{Remark}
It is worth noting an important difference between Theorems \ref{Intro:equiv:normal:F-representation} and \ref{intro:equiv:jointnormal:F-representation}  when $k >1$.
Namely,  while in light of the pointwise ergodic theorem, for any $\mathbf{T} =(T, \mu, \mathcal{A}) \in \tilde{\mathcal{T}}$, almost every $x$ in $X$ is $\mathbf{T}$-normal, the phenomenon of joint $({\mathbf{T}}^{(1)}, \dots, {\mathbf{T}}^{(k)})$-normality is much more delicate.
For example, the transformation $T_1 \times \cdots \times T_k$ can be totally ergodic even if for some $i \ne j$, $T_i = T_j$. 
However, in this case there are no jointly $({\mathbf{T}}^{(1)}, \dots, {\mathbf{T}}^{(k)})$-normal points. 
On the other hand, Theorem \ref{intro-main-thm-pje}, which deals with the phenomenon of pointwise joint ergodicity, implies that if each $\mathbf{T}^{(i)} = (T_i, \mu_i, \mathcal{A}_i) \in \mathcal{T}, (1 \leq i \leq k),$ has properties E and M and $h(T_1), \dots, h(T_k)$ are distinct, then  almost every $x$ is jointly $({\mathbf{T}}^{(1)}, \dots, {\mathbf{T}}^{(k)})$-normal. 
Similar results hold for mutually disjoint systems (see Section \ref{sec:disjoint} for the details). 
\end{Remark}

The structure of this paper is as follows.
In Section \ref{joint ergodicity}, we discuss the notion of  pointwise joint ergodicity and establish a characterization of pointwise joint ergodicity in terms of equidistribution. The results obtained in  this section will be instrumental for the proof of the main theorem of this paper (Theorem \ref{intro-main-thm-pje} above).
In Section \ref{sec:3}, we discuss properties E and M and describe numerous examples of systems which possess properties E and M and belong to the family $\mathcal{T}$. These examples are used in the subsequent sections to illustrate our results pertaining to joint normality. 
Section \ref{sec:pointwisejointergodicity} is devoted to the proof of Theorem \ref{intro-main-thm-pje}. 
In Section \ref{sec:disjoint}, we establish additional results on pointwise joint ergodicity which utilize the notion of disjointness. In Section \ref{Sec:FRep}, we deal with various equivalent forms of normality and joint normality, which were briefly discussed in the Introduction. 
In particular, we provide the proof of Theorem \ref{Intro:equiv:normal:F-representation}. 
We also discuss a variant of Theorem \ref{Intro:equiv:normal:F-representation} for joint normality. 

\section*{Acknowledgement}
Younghwan Son is supported by the NRF of Korea (NRF-2020R1A2C1A01005446).

\section{pointwise joint ergodicity}
\label{joint ergodicity}

In this section, we introduce the notion of pointwise joint ergodicity and present a useful characterization of pointwise joint ergodicity of transformations $T_1, \dots, T_k$ acting on measurable space $(X, \mathcal{B})$ in terms of the equidistribution in $X^k$ of the orbits $(T_1^n x, \dots, T_k^n x)$, $n = 0, 1, , 2 \dots.$

\begin{Definition} 
\label{Def:PJE}
Let $T_1, \dots, T_k: X \rightarrow X$ be measurable transformations on a measurable space $(X, \mathcal{B})$. Assume that for each $i \in \{1, 2, \dots, k\}$ there is a $T_i$-invariant probability measure $\mu_i$ on $\mathcal{B}$ so that for any $i,j \in \{1, 2, \dots, k\}$, the measures $\mu_i$ and $\mu_j$ are equivalent meaning that they are mutually absolutely continuous. 
We say that $T_1, \dots, T_k$ are {\em pointwise jointly ergodic} with respect to $(\mu_1, \dots, \mu_k)$ if for any $f_1, \dots, f_k \in L^{\infty}$,
$$\lim\limits_{N \rightarrow \infty} \frac{1}{N } \sum\limits_{n=0}^{N-1}  f_1(T_1^{n}x) f_2( T_2^n x)  \cdots f_k(T_k^n x) = \prod_{i=1}^k \int f_i \, d \mu_i \quad \text{a.e. } x. \footnote{Since $\mu_1, \dots, \mu_k$ are pairwise equivalent, we simply write ``$\text{a.e. } x$" instead of ``$\mu_i\text{-a.e. } x$ for $i = 1, ,2 \dots, k$".}$$
\end{Definition}
If measures $\mu_1, \dots, \mu_k$ are understood, we just say $T_1, \dots, T_k$ are pointwise jointly ergodic.

The following useful characterization of pointwise joint ergodicity is motivated by Lemma 2.1 in \cite{Be}. 
It will be utilized in the proof of our main theorem (Theorem \ref{intro-main-thm-pje}) in Section \ref{sec:pointwisejointergodicity}.

\begin{Theorem}
\label{lem:pje_gp}
Let $X$ be a compact metric space and let $\mathcal{B}$ be the Borel $\sigma$-algebra of $X$.
Suppose that $T_1, \dots, T_k: X \rightarrow X$ are measurable transformations on $(X, \mathcal{B})$ such that for each $i \in \{ 1, 2, \dots, k\}$ there is a $T_i$-invariant probability measure $\mu_i$ and for any $i, j \in \{ 1, 2, \dots, k\}$, the measures $\mu_i$ and $\mu_j$ are mutually absolutely continuous. Then the following conditions are equivalent.
\begin{enumerate}[(i)]
\item $T_1, \dots, T_k$ are pointwise jointly ergodic with respect to $(\mu_1, \dots, \mu_k)$.
\item For a.e. $x \in X$, the sequence of $k$-tuples $(T_1^n x, \dots, T_k^n x), \, n = 0, 1, 2, \dots,$ is equidistributed in $X^k$ with respect to $\mu_1 \times \cdots \times \mu_k$:
\[ \frac{1}{N} \sum_{n=0}^{N-1} \delta_{T_1^n x} \times \cdots \times \delta_{T_k^n x} \rightarrow \mu_1 \times \cdots \times \mu_k \text{ in weak* topology}. \]
\end{enumerate}
\end{Theorem}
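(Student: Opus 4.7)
The proof proceeds by a standard approximation argument in two stages: first reducing statements about all continuous test functions on $X^k$ to tensor products via Stone--Weierstrass, and then extending from continuous to $L^\infty$ functions using Lusin's theorem together with the individual pointwise ergodic theorems for each $T_i$.

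For the implication $(i) \Rightarrow (ii)$: I would fix a countable dense sequence $(g_j)_{j \in \mathbb{N}}$ in $C(X)$, which exists because $X$ is a compact metric space. The pointwise joint ergodicity hypothesis, applied to each $k$-tuple drawn from this sequence (a countable collection), produces for each tuple a full-measure set on which the Birkhoff-type averages converge to the expected product of integrals. Intersecting yields a single full-measure set $E$ on which simultaneous convergence holds for every tuple $(g_{j_1}, \ldots, g_{j_k})$. For $x \in E$, the averages $\frac{1}{N}\sum_{n<N} f_1(T_1^n x)\cdots f_k(T_k^n x)$ depend continuously on $(f_1, \ldots, f_k) \in C(X)^k$ in the product sup norm, uniformly in $N$, so convergence to $\prod_i \int f_i\,d\mu_i$ extends by density from the countable tuples to all of $C(X)^k$. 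Since the linear span of tensor products $f_1 \otimes \cdots \otimes f_k$ with $f_i \in C(X)$ is an algebra in $C(X^k)$ that separates points and contains the constants, Stone--Weierstrass shows it is uniformly dense in $C(X^k)$; this upgrades the convergence on tensor products to equidistribution of $(T_1^n x, \ldots, T_k^n x)$ with respect to $\mu_1 \times \cdots \times \mu_k$.

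For the converse $(ii) \Rightarrow (i)$: testing the weak* convergence against tensor products of continuous functions gives (i) immediately for continuous $f_1, \ldots, f_k$. Taking all but one factor equal to $1$ yields the pointwise ergodic convergence $\frac{1}{N}\sum_{n<N} g(T_i^n x) \to \int g\,d\mu_i$ a.e.\ for every continuous $g$; combined with the $T_i$-invariance of $\mu_i$ this forces ergodicity of each $T_i$ with respect to $\mu_i$, so Birkhoff's theorem then yields a.e.\ convergence for all $g \in L^1(\mu_i)$. To handle arbitrary $f_1, \ldots, f_k \in L^\infty$ with $\|f_i\|_\infty \le M$, I would apply Lusin's theorem and the Tietze extension theorem to obtain $\tilde f_i \in C(X)$ with $\|\tilde f_i\|_\infty \le M$ and $\mu_i(\{f_i \ne \tilde f_i\}) < \epsilon$. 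Using the telescoping identity
\[ \prod_{i=1}^k f_i - \prod_{i=1}^k \tilde f_i = \sum_{i=1}^k \tilde f_1 \cdots \tilde f_{i-1}\,(f_i - \tilde f_i)\,f_{i+1} \cdots f_k, \]
the discrepancy between the Birkhoff averages for $(f_i)$ and those for $(\tilde f_i)$ is dominated by $M^{k-1} \sum_{i=1}^k \frac{1}{N}\sum_{n<N} |f_i - \tilde f_i|(T_i^n x)$, which by the individual ergodic theorem for $T_i$ tends a.e.\ to at most $M^{k-1} \sum_i \int |f_i - \tilde f_i|\,d\mu_i \le 2kM^k\epsilon$. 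Combining with the convergence for the continuous $(\tilde f_i)$ established in the previous step and letting $\epsilon \to 0$ yields (i).

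I expect the main obstacle to lie in the $L^\infty$-extension step of $(ii) \Rightarrow (i)$: the hypothesis (ii) is only a statement about continuous test functions, so one must bootstrap to a genuine diagonal convergence for bounded measurable functions, which requires independently extracting the pointwise ergodic theorem for each $T_i$ from the equidistribution and then carefully controlling the telescoped error using Lusin approximation on each coordinate separately.
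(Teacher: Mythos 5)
Your proposal is correct and follows essentially the same route as the paper: for $(i)\Rightarrow(ii)$ both arguments reduce to a countable family built from tensor products of continuous functions (the paper takes a countable dense set of finite sums of tensor products in $C(X^k)$, you take tuples from a countable dense set in $C(X)$ plus Stone--Weierstrass, which is the same density fact), and for $(ii)\Rightarrow(i)$ both first extract ergodicity of each $T_i$ from the equidistribution, then approximate the $L^\infty$ functions by continuous ones in $L^1(\mu_i)$ and control the error via the telescoping identity and the individual pointwise ergodic theorem. The only detail worth making explicit is that the final "let $\epsilon\to 0$" must be run over a countable sequence $\epsilon=1/m$ with an intersection of the corresponding full-measure sets, exactly as the paper does with its sets $A_m$ and $B=\bigcap_m A_m$.
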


\begin{proof} 
$(i) \Rightarrow (ii)$: Let $\mu = \mu_1 \times \cdots \times \mu_k$.
Denote $C(Y)$ by the space of continuous functions on a compact space $Y$.
We will use the classical fact that the functions of the form $f_1 \otimes  \cdots \otimes f_k (x_1, \dots, x_k) = \prod_{i=1}^k f_i(x_i)$, where $f_i \in C(X)$, are linearly dense in the space $C(X^k)$. 
Moreover, there is a countable set of functions $g_i \in C(X)$ such that their tensor product is linearly dense in $C(X^k)$. 
Thus one can find a countable dense subset $\{F_m: m \in \mathbb{N}\}$ of $C(X^k)$, where each $F_m$ is a finite sum of functions of the form $f_1 \otimes  \cdots \otimes f_k$. 

Condition (i) implies that for each $m \in \mathbb{N}$, there exists a full measure set $A_m$  such that if $x \in A_m$, then 
\begin{equation}
\label{eqn:Thm2.3:2.1}
\lim_{N \rightarrow \infty} \frac{1}{N} \sum_{n=0}^{N-1} F_m (T_1^n x, \cdots, T_k^n x) =  \int F_m \, d \mu.
\end{equation}
Let $A = \bigcap\limits_{m=1}^{\infty} A_m$.
It suffices to show that if $F$ is  a continuous function on $X^k$, then for any $x \in A$,
\begin{equation*}
\label{eqn:Thm2.3:2.2}
 \lim_{N \rightarrow \infty}  \frac{1}{N} \sum_{n=0}^{N-1} F (T_1^n x, \cdots, T_k^n x) = \int F \, d \mu.
 \end{equation*}
Let $\epsilon >0$ and let $F_m$ satisfy $\| F_m - F \|_{\sup} < \epsilon$. Then, clearly,
$$ \int |F_m - F| \, d \mu < \epsilon$$ 
and 
$$\frac{1}{N} \sum_{n=0}^{N-1} \left| F (T_1^n x, \cdots, T_k^n x) - F_m (T_1^n x, \cdots, T_k^n x) \right| < \epsilon.$$
So we have 
\begin{align*} 
\left| \frac{1}{N} \sum_{n=0}^{N-1} F (T_1^n x, \cdots, T_k^n x) - \int F \, d \mu \right|   
&\leq   \frac{1}{N} \sum_{n=0}^{N-1} \left| F (T_1^n x, \cdots, T_k^n x) - F_m (T_1^n x, \cdots, T_k^n x) \right| \\
 &\quad + \left| \frac{1}{N} \sum_{n=0}^{N-1} F_m (T_1^n x, \cdots, T_k^n x) - \int F_m \, d \mu \right|  
+ \int |F_m - F| \, d \mu \\
&\leq \left| \frac{1}{N} \sum_{n=0}^{N-1} F_m (T_1^n x, \cdots, T_k^n x) - \int F_m \, d \mu \right|  + 2 \epsilon.
\end{align*}
Letting $N \rightarrow \infty$ (and using \eqref{eqn:Thm2.3:2.1}) we get
\[ \limsup_{N \rightarrow \infty} \left| \frac{1}{N} \sum_{n=0}^{N-1} F (T_1^n x, \cdots, T_k^n x) - \int F \, d \mu \right| \leq 2 \epsilon.\]
Since $\epsilon$ is arbitrary, the desired result follows.

$(ii) \Rightarrow (i)$: Note first that condition (ii) implies that each $T_i$ is $\mu_i$-ergodic. Indeed, it follows from condition (ii) that there exists a set $X_0$ of full measure such that for any $x \in X_0$, for any $f \in C(X)$, and for any $i = 1, 2 \dots, k$,
\begin{equation*}
\label{eqn:erg}
\frac{1}{N} \sum_{n=1}^N f(T_i^n x) \rightarrow \int f \, d \mu_i.
\end{equation*}
So, by dominated convergence theorem, 
\begin{equation}
\label{eqn:erg:mean}
\frac{1}{N} \sum_{n=1}^N f(T_i^n x) \rightarrow \int f \, d \mu_i \text{ in } L^2 (\mu_i).
\end{equation}
It follows by the standard approximation argument that \eqref{eqn:erg:mean} holds for any $f \in L^2(\mu_i)$, which implies that each $T_i$ is $\mu_i$-ergodic.

We need to prove that for any bounded measurable functions $f_1, \dots, f_k$,
\[ \lim_{N \rightarrow \infty} \frac{1}{N} \sum_{n=0}^{N-1} \prod_{i=1}^k f_i(T_i^n x) = \prod_{i=1}^k \int f_i \, d \mu_i \quad \text{for a.e. } x.\]
Without loss of generality, we assume that $|f_i(x)| \leq 1$ for all $x \in X$ and for $i = 1, 2, \dots, k$. For any given $m \in \mathbb{N}$, there are continuous functions $g_1^{(m)}, \dots, g_k^{(m)}$ such that $|g_i^{(m)} (x)| \leq 1$ for all $x \in X$ and $\int |f_i - g_i^{(m)} | d \mu_i < \frac{1}{m}$. 

By condition (ii), there is a set of full measure $A_m^1$ such that if $x \in A_m^1$, then 
\begin{equation}
\label{eq:con2}
 \lim_{N \rightarrow \infty} \frac{1}{N} \sum_{n=0}^{N-1} \prod_{i=1}^k g_i^{(m)}(T_i^n x) = \prod_{i=1}^k \int g_i^{(m)} \, d \mu_i, 
 \end{equation} 
and by pointwise ergodic theorem, there is a set of full measure $A_m^2$ such that if $x \in A_m^2$, then for any $i = 1, 2, \dots, k$,
\begin{equation}
\label{eq:pje:sec2} 
\lim_{N \rightarrow \infty} \frac{1}{N}\sum_{n=0}^{N-1} |f_i(T_i^n x) - g_i^{(m)}(T_i^n x)| = \int |f_i - g_i^{(m)}| \, d \mu_i.
\end{equation}

Let $A_m = A_m^1 \cap A_m^2$.
We will show that if $x \in A_m$, then 
\begin{equation} 
\label{eq:2.5} 
\limsup_{N \rightarrow \infty}  \left|\frac{1}{N} \sum_{n=0}^{N-1} \prod_{i=1}^k f_i(T_i^n x) - \prod_{i=1}^k \int f_i \, d \mu_i \right| \leq \frac{2k}{m}.
\end{equation}
By triangle inequality,
\begin{align}
\label{eq:2.6}
\left|\frac{1}{N} \sum_{n=0}^{N-1} \prod_{i=1}^k f_i(T_i^n x) - \prod_{i=1}^k \int f_i \, d \mu_i \right| &\leq  
\frac{1}{N} \sum_{n=0}^{N-1} \left| \prod_{i=1}^k f_i(T_i^n x) - \prod_{i=1}^k g_i^{(m)}(T_i^n x) \right|  \\
&+  \left| \frac{1}{N} \sum_{n=0}^{N-1} \prod_{i=1}^k g_i^{(m)}(T_i^n x) - \prod_{i=1}^k \int g_i^{(m)} \, d \mu_i \right| \notag \\
&+ \left|\prod_{i=1}^k  \int g_i^{(m)} \, d \mu_i - \prod_{i=1}^k \int f_i \, d \mu_i \right| \notag
\end{align}

Let us estimate each of the three terms on the right hand side of \eqref{eq:2.6}.
To estimate the first term on the right hand side of \eqref{eq:2.6}, we use the following identity 
\begin{equation}
\label{identity:2.4}
    \prod_{i=1}^k a_i - \prod_{i=1}^k b_i = 
   (a_1 - b_1) b_2 \cdots b_k + a_1 (a_2 - b_2) b_3 \cdots b_k + \cdots + a_1 \cdots a_{k-1} (a_k - b_k).
\end{equation}

We have
$$ \left| \prod_{i=1}^k f_i(T_i^n x) - \prod_{i=1}^k g_i^{(m)}(T_i^n x) \right| \leq \sum_{i=1}^k |f_i(T_i^n x) - g_i^{(m)}(T_i^n x)|,$$ 
so  
\[ \frac{1}{N} \sum_{n=0}^{N-1} \left| \prod_{i=1}^k f_i(T_i^n x) - \prod_{i=1}^k g_i^{(m)}(T_i^n x) \right| \leq \sum_{i=1}^k \frac{1}{N} \sum_{n=0}^{N-1} |f_i(T_i^n x) - g_i^{(m)}(T_i^n x)|. \]
Thus, if $x \in A_m$, by \eqref{eq:pje:sec2},
\[ \limsup_{N \rightarrow \infty}\frac{1}{N} \sum_{n=0}^{N-1} \left| \prod_{i=1}^k f_i(T_i^n x) - \prod_{i=1}^k g_i^{(m)}(T_i^n x) \right|  \leq \sum_{i=1}^k \int |f_i - g_i^{(m)}| \, d \mu_i \leq  \frac{k}{m}.  \]
As for the second term on the right hand side of \eqref{eq:2.6},  we have by \eqref{eq:con2} that if $x \in A_m$,
\[ \lim_{N \rightarrow \infty} \left| \frac{1}{N} \sum_{n=0}^{N-1} \prod_{i=1}^k g_i^{(m)}(T_i^n x) - \prod_{i=1}^k \int g_i^{(m)} \, d \mu_i \right| = 0.\]
Finally applying \eqref{identity:2.4} to the last term on the right hand side of \eqref{eq:2.6}, we have that 
$$\left|\prod_{i=1}^k  \int g_i^{(m)} \, d \mu_i - \prod_{i=1}^k \int f_i \, d \mu_i \right| \leq \sum_{i=1}^k \int |g_i^{(m)} - f_i| \, d \mu_i \leq \frac{k}{m}.$$  
Putting all the things together, we get \eqref{eq:2.5}. 

Let $B = \bigcap_{m=1}^{\infty} A_m$. 
Clearly $B$ is of full measure (with respect to each $\mu_i$). It follows from \eqref{eq:2.5} that for any $x \in B$, we have 
\[ \lim_{N \rightarrow \infty}  \frac{1}{N} \sum_{n=0}^{N-1} \prod_{i=1}^k f_i(T_i^n x) = \prod_{i=1}^k \int f_i \, d \mu_i, \] 
which concludes the proof.
\end{proof}


\section{Number-theoretic transformations possessing properties E and M}
\label{sec:3}
In this section we discuss in detail properties E and M, which appear in the formulation our main theorem (Theorem \ref{j.e.:pointwise} in Section \ref{sec:pointwisejointergodicity}).
We also describe numerous examples of systems which possess properties E and M and belong to the family $\mathcal{T}$, which was introduced in Section \ref{sec:Intro}. These examples will be used in the subsequent sections to illustrate our results pertaining to pointwise joint ergodicity and joint normality. 

\subsection{Entropy and property E}
\label{subsec3.1}
\mbox{}

In this short section we formulate, for reader's convenience, the classical Shannon-McMillan-Breiman theorem as well as a corollary of it which serve as a motivation for introducing property E.

\begin{Theorem}[Shannon-McMillan-Breiman, see for example Section 6.2 in \cite{Pe}]
\label{entropy:SMB}
Let $(X, \mathcal{B}, \mu, T)$ be a probability measure preserving system and let $h(T)$ denote the Kolmogorov-Sinai entropy of $T$. 
Let $\mathcal{A}$ be a  countable measurable partition of $X$ with $- \sum_{A \in \mathcal{A}} \mu(A) \log \mu (A) < \infty$ and $  \bigvee_{n=0}^{\infty} T^{-n} \sigma( \mathcal{A} ) = \mathcal{B}  \mod \mu$. 
For $x \in X$, let $\mathcal{A}(x)$ be the element of $\mathcal{A}$ which contains $x$. Let $\mathcal{A}^n :=  \bigvee\limits_0^{n-1} T^{-j} \mathcal{A}$. Then
\[ -  \frac{1}{n} \log \mu (\mathcal{A}^n (x) ) \rightarrow h(T) \,\, \text{a.e. and in } L^1. \]
\end{Theorem}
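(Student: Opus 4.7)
The plan is to follow Breiman's classical argument, which rests on three ingredients: a chain-rule decomposition of the information function, Doob's martingale convergence theorem, and the pointwise ergodic theorem combined with a maximal-function estimate to handle non-stationarity.

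The first step is to derive the telescoping identity
$$-\log \mu(\mathcal{A}^n(x)) = \sum_{k=0}^{n-1} g_{n-1-k}(T^k x),$$
where $g_j(x) := -\log \mu\bigl(\mathcal{A}(x) \,\big|\, \bigvee_{i=1}^{j} T^{-i}\mathcal{A}\bigr)(x)$ for $j \geq 1$ and $g_0(x) := -\log \mu(\mathcal{A}(x))$. This comes from iterating the chain-rule identity $\mu(\mathcal{A}^n(x)) = \mu(\mathcal{A}^{n-1}(Tx)) \cdot \mu\bigl(\mathcal{A}(x) \,\big|\, \bigvee_{i=1}^{n-1} T^{-i}\mathcal{A}\bigr)(x)$, itself immediate from $\mathcal{A}^n(x) = \mathcal{A}(x) \cap T^{-1}\mathcal{A}^{n-1}(Tx)$.

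Since the $\sigma$-algebras $\mathcal{F}_j := \bigvee_{i=1}^{j} T^{-i}\mathcal{A}$ increase to $\mathcal{F}_\infty := \bigvee_{i=1}^{\infty} T^{-i}\mathcal{A}$, Doob's martingale convergence theorem, applied atom-by-atom to the bounded martingales $\mu(\mathcal{A}(x) \mid \mathcal{F}_j)$, yields $g_j \to g_\infty$ a.e.\ and in $L^1$, where $g_\infty(x) := -\log \mu(\mathcal{A}(x) \mid \mathcal{F}_\infty)(x)$. By the definition of conditional entropy, $\int g_\infty\, d\mu = H(\mathcal{A} \mid \mathcal{F}_\infty) = h(T, \mathcal{A})$, and since $\mathcal{A}$ generates $\mathcal{B}$, the Kolmogorov--Sinai identity identifies this with $h(T)$.

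The final step is to split
$$\frac{1}{n}\bigl(-\log \mu(\mathcal{A}^n(x))\bigr) = \frac{1}{n}\sum_{k=0}^{n-1} g_\infty(T^k x) + \frac{1}{n}\sum_{k=0}^{n-1} \bigl(g_{n-1-k} - g_\infty\bigr)(T^k x).$$
Under the (implicit) ergodicity hypothesis, the first average tends a.e.\ to $\int g_\infty\, d\mu = h(T)$ by the pointwise ergodic theorem. The main obstacle is the ``non-stationary'' second average, whose summand varies with $k$; the crucial input for controlling it is integrability of the maximal function $G^*(x) := \sup_{j \geq 0} g_j(x)$. A stopping-time argument applied to the martingale $\mu(A \mid \mathcal{F}_j)$ gives $\mu(\{G^* > t\} \cap A) \leq e^{-t}$ for each $A \in \mathcal{A}$, hence $\int G^*\, d\mu \leq H(\mathcal{A}) + 1 < \infty$ by the finite-entropy hypothesis. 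Given $\epsilon > 0$, choose $K$ so large that $|g_j - g_\infty| < \epsilon$ on the complement of a set $E$ with $\mu(E) < \epsilon$ for all $j \geq K$. The summands with $n - 1 - k \geq K$ are then bounded by $\epsilon + 2 G^*(T^k x)\mathbf{1}_E(T^k x)$, whose Ces\`aro average is $O(\epsilon)$ by Birkhoff's theorem together with the absolute continuity of $\int G^*\mathbf{1}_E\, d\mu$ in $\mu(E)$; the remaining $O(K)$ boundary terms contribute $\frac{1}{n}\sum_{k=n-K}^{n-1} G^*(T^k x)$, which tends to zero a.e.\ as a difference of two convergent Ces\`aro averages of $G^*$. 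Letting $\epsilon \downarrow 0$ gives a.e.\ convergence to $h(T)$; the $L^1$ convergence follows from a.e.\ convergence together with the $L^1$-bounded envelope $\frac{1}{n}\sum_{k=0}^{n-1} G^*(T^k x)$ supplied by the mean ergodic theorem.
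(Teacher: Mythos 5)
The paper does not actually prove this statement; it quotes it as the classical Shannon--McMillan--Breiman theorem with a pointer to Section~6.2 of Petersen's book, and your argument is precisely the standard Breiman proof found there: the chain-rule decomposition of the information function, martingale convergence of the conditional probabilities, the Chung--Neveu maximal inequality giving $\int \sup_j g_j \, d\mu \le H(\mathcal{A})+1$, and the ergodic theorem plus the tail estimate to absorb the non-stationary error. Your observation that ergodicity is implicitly assumed is correct --- without it the limit is the conditional expectation of $g_\infty$ on the invariant $\sigma$-algebra rather than the constant $h(T)$ --- and this is harmless here since every system in the paper's class $\mathcal{T}$ is ergodic.
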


The following corollary of Shannon-McMillan-Breiman theorem, which is often referred to as the entropy equipartition property, states roughly that 
for most sets $A \in \mathcal{A}^n$, $\mu(A)$ is approximately $e^{- n h(T)}$.

\begin{Corollary}
\label{entropy:intervals}
Given $\epsilon > 0$, there is $n_0$ such that for $n \geq n_0$ the elements of $\mathcal{A}^n$ can be divided into two classes 
$\mathfrak{g}^n = \mathfrak{g}^{n} (\epsilon)$ and $\mathfrak{b}^n= \mathfrak{b}^n (\epsilon)$ (``good" and ``bad" atoms) such that 
\begin{enumerate}
\item $\mu \left( \bigcup_{B \in \mathfrak{b}^n} B \right) < \epsilon$
\item For $A \in \mathfrak{g}^n$,
\begin{equation}
\label{entropy:equipartition} 
e^{- n (h(T) + \epsilon)} < \mu(A) < e^{-n (h(T) - \epsilon)}.
\end{equation}
\end{enumerate}
Thus we have that for $n \geq n_0$
\begin{equation} 
 \mu \left( \{ x\in [0,1): x \in A \text{ for some } A \in \mathcal{A}^n \text{ with } e^{- n(h(T) + \epsilon)} \leq \mu(A) \leq e^{- n (h(T) - \epsilon )}    \}  \right) \geq  1 -  \epsilon.
\end{equation}
\end{Corollary}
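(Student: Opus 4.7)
The plan is to deduce the corollary directly from the Shannon--McMillan--Breiman theorem (Theorem \ref{entropy:SMB}) by translating almost-everywhere convergence into a quantitative statement about atoms of $\mathcal{A}^n$. The key observation is that the function
\[
f_n(x) := -\tfrac{1}{n} \log \mu(\mathcal{A}^n(x))
\]
is constant on each atom of $\mathcal{A}^n$: if $A \in \mathcal{A}^n$ and $x \in A$, then $\mathcal{A}^n(x) = A$ and hence $f_n(x) = -\tfrac{1}{n} \log \mu(A)$. Therefore any measurable set defined through the values of $f_n$ is automatically a union of atoms of $\mathcal{A}^n$.

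With this in mind, I would fix $\epsilon > 0$ and define the ``good set''
\[
G_n := \bigl\{ x \in X : \bigl| f_n(x) - h(T) \bigr| < \epsilon \bigr\},
\]
which by the above remark is a union of atoms of $\mathcal{A}^n$; call these atoms $\mathfrak{g}^n = \mathfrak{g}^n(\epsilon)$, and let $\mathfrak{b}^n$ be the remaining atoms (those contained in $G_n^c$). By Theorem \ref{entropy:SMB}, $f_n \to h(T)$ a.e., so in particular $f_n \to h(T)$ in $\mu$-measure on the probability space $(X,\mathcal{B},\mu)$; this gives $\mu(G_n^c) \to 0$. Hence there exists $n_0 = n_0(\epsilon)$ such that $\mu(G_n^c) < \epsilon$ for all $n \geq n_0$, i.e. $\mu\bigl(\bigcup_{B \in \mathfrak{b}^n} B\bigr) < \epsilon$, which is condition (1).

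For condition (2), pick any $A \in \mathfrak{g}^n$ and any $x \in A$; then $|f_n(x) - h(T)| < \epsilon$, i.e. $h(T) - \epsilon < -\tfrac{1}{n}\log\mu(A) < h(T)+\epsilon$. Exponentiating and rearranging yields exactly
\[
e^{-n(h(T)+\epsilon)} < \mu(A) < e^{-n(h(T)-\epsilon)},
\]
which is inequality \eqref{entropy:equipartition}. The final displayed inequality of the corollary is then immediate: the set in question contains $G_n = \bigcup_{A \in \mathfrak{g}^n} A$, whose measure is $\geq 1 - \epsilon$ for $n \geq n_0$.

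There is no genuine obstacle here; the only subtlety worth flagging is the passage from a.e.\ convergence in Shannon--McMillan--Breiman to a uniform measure estimate on $G_n^c$, which is simply the standard fact that on a probability space almost-everywhere convergence implies convergence in measure (alternatively, one can invoke Egorov's theorem to extract a set of measure $\geq 1 - \epsilon$ on which the convergence is uniform, and then pick $n_0$ so that on that set $|f_n - h(T)| < \epsilon$ for $n \geq n_0$). No use is made of the $L^1$ half of the SMB conclusion; almost-everywhere convergence alone suffices.
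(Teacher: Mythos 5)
Your proof is correct and is exactly the standard derivation the authors have in mind: the paper states this as an immediate consequence of the Shannon--McMillan--Breiman theorem without supplying details, and your argument (the level sets of $-\tfrac{1}{n}\log\mu(\mathcal{A}^n(x))$ are unions of atoms, and a.e.\ convergence on a probability space gives convergence in measure, hence $\mu(G_n^c)<\epsilon$ for large $n$) fills in precisely those details. No gaps.
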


Note that, given $(T, \mu, \mathcal{A}) \in \mathcal{T}$ and $n \in \mathbb{N}$, $\mathcal{A}^n =  \bigvee\limits_0^{n-1} T^{-j} \mathcal{A}$ is also a finite or countable partition of $[0,1]$ into intervals; we will call these intervals {\em cylinders of rank $n$}.
Since $\frac{1}{c} \mu \leq \lambda \leq  c \mu$ for some $c \geq 1$, Corollary \ref{entropy:intervals} says that if a cylinder $A$ of rank $n$ belongs to the class $\mathfrak{g}^n$ of good atoms, then
\[ \frac{1}{c} e^{- n (h(T) + \epsilon)} \leq \lambda(A) \leq c e^{-n (h(T) - \epsilon)}. \]

The property E which we will recall here for the convenience of the reader can be viewed as a formalization of the requirement that most cylinders of rank $n$ belong to the class $\mathfrak{g}^n$ of good atoms.
\begin{Definition}
$(T, \mu, \mathcal{A}) \in \mathcal{T}$ has  {\em property E} if  for any $\epsilon >0$, there is a positive real number $c_0 = c_0 (\epsilon)$ such that, for any $n \in \mathbb{N}$,
\begin{equation}
\label{intro_eqn:propertyA}  
 \mu \left( \{ x\in [0,1): x \in A \text{ for some } A \in \mathcal{A}^n \text{ with } e^{- n(h(T) + \epsilon)} \leq \mu(A) \leq e^{- n (h(T) - \epsilon )}    \}  \right) \geq  1 -  \frac{c_0}{n}.
\end{equation}
\end{Definition}

\subsection{Property M and RU maps}
\label{sec3.2}
\mbox{}

In this subsection, we introduce the notion of a RU map and show that any weakly mixing RU map has property M. We also show that if $(T, \mu, \mathcal{A}) \in \mathcal{T}$ has property M, then $T$ is exact. 

We start with recalling the definition of property M.
\begin{Definition}
$(T, \mu, \mathcal{A}) \in \mathcal{T}$ has {\em property M} if there is a sequence $(b_n)_{n=1}^{\infty}$ of non-negative real numbers with $\sum\limits_{n=1}^{\infty} b_n < \infty$  such that, 
for any natural number $l$ and for any non-negative integer $n$, if $A \in \sigma (\mathcal{A}^l)$ and  $B \in \mathcal{B}$, then
\begin{equation}
\label{eq:propertyM}
| \lambda (A \cap T^{-(n+l)} B)  - \lambda(A) \mu(B)| \leq b_n.
\end{equation}
\end{Definition}

Define variation of $f$ on $I= [0,1]$ by 
\[ \text{Var} (f) := \sup \sum_i |f(x_i) - f(x_{i-1})|,\]
where the supremum ranges over all $0 = x_0 < x_1 < \cdots < x_n =1$. 

For every $f \in L^1(\lambda)$, set
$$\| f \|_{BV} := \| f \|_{\infty} + V(f), \, \text{where} \, V(f) = \inf \{\text{Var} (f^*): f^* = f \, \,\, \text{a.e. with respect to } \lambda\}. $$

Let $BV = :\{f \in L^1(\lambda): \|f\|_{BV} < \infty  \}$.
The space $BV$ is a Banach space with the norm $\|  \cdot \|_{BV}$. 

\begin{Definition}[cf. \cite{ADSZ},  p.19 and \cite{AaNa}, p.3] 
\label{def:RU}
A map $T:[0,1] \rightarrow [0,1]$ is said to be a {\em RU map}
if there exists a partition $\mathcal{A}$ of $[0,1]$ into (open, closed, or half open on either end) intervals with nonempty and disjoint interiors such that
\begin{enumerate}
    \item  $T$ is uniformly expanding: 
        \[ \inf \{ |T'(x)| : x \in \text{int}(A), A \in \mathcal{A} \}  > 1\]
    \item $T$ satisfies {\em Rychlik's condition}:
\[ \sum_{A \in \mathcal{A}} \| 1_{TA} v_A' \|_{BV} < \infty,\] 
where $v_{A}$, $A \in \mathcal{A}$, denotes the inverse of $T_{|A}: A \rightarrow TA$. 
\end{enumerate}
\end{Definition}

\begin{Remark}
 Notice that RU maps are presumed to be equipped with a partition $\mathcal{A}$ of $[0,1]$ (so that the properties (1) and (2) in Definition \ref{def:RU} are satisfied). 
 We will find it convenient to denote RU maps belonging to the class $\mathcal{T}$ by triples $(T, \mu, \mathcal{A})$ with the understanding that the partition $\mathcal{A}$ in the triple  $(T, \mu, \mathcal{A})$ corresponds to the partition which is stipulated in the Definition \ref{def:RU}.
\end{Remark}

The following result is motivated by Theorem 1(a) in the paper by Aaronson and Nakada (\cite{AaNa}).  
\begin{Theorem}
\label{RUmap:propertyM}
If $(T, \mu, \mathcal{A}) \in \mathcal{T}$ is a weakly mixing RU map, then it has property M. More precisely, there is a real number $r$ with $0 < r <1$ such that for any $A \in \sigma (\mathcal{A}^l)$ and $B \in \mathcal{B}$, 
\[ |\lambda(A \cap T^{-n-l} B) - \lambda(A) \mu (B)| \leq  \mu(B) \, O(r^n). \]
\end{Theorem}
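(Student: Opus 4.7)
The plan is to apply the spectral theory of the Perron--Frobenius (transfer) operator, following the approach of Aaronson and Nakada. Let $\hat T\colon L^1(\lambda)\to L^1(\lambda)$ denote the transfer operator dual to $T$, characterized by
\[
\int \hat T f \cdot g\, d\lambda \;=\; \int f\cdot (g\circ T)\, d\lambda \qquad (f\in L^1(\lambda),\, g\in L^\infty(\lambda)).
\]
Since $(T,\mu,\mathcal A)\in\mathcal T$, the density $h := d\mu/d\lambda$ is bounded above and below, and satisfies $\hat T h = h$. Rychlik's theorem for RU maps gives a Lasota--Yorke inequality for $\hat T$ acting on $BV$, hence quasi-compactness of $\hat T$ on $BV$. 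Because $T$ is assumed weakly mixing, the peripheral spectrum of $\hat T$ on $BV$ consists only of the simple eigenvalue $1$ with eigenfunction $h$, while the remainder of the spectrum lies inside a disc of radius $r<1$. Consequently there exists $C>0$ and $r\in(0,1)$ such that, for every $f\in BV$ with $\int f\, d\lambda = 0$,
\[
\|\hat T^n f\|_\infty \;\le\; \|\hat T^n f\|_{BV} \;\le\; C r^n \|f\|_{BV}.
\]

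With this in hand I would rewrite the target quantity via duality. Using $\hat T^n h = h$ (so $\lambda(A)h = \hat T^n(\lambda(A)h)$) and the identity $\lambda(A\cap T^{-(n+l)}B) = \int 1_B\,\hat T^{n+l}(1_A)\,d\lambda$,
\[
\lambda\bigl(A\cap T^{-(n+l)}B\bigr) - \lambda(A)\mu(B)
\;=\; \int 1_B\cdot \hat T^n g_{A,l}\, d\lambda,
\qquad \text{where } g_{A,l} := \hat T^l(1_A) - \lambda(A)h.
\]
A one-line computation shows $\int g_{A,l}\, d\lambda = \lambda(A) - \lambda(A) = 0$, so $g_{A,l}$ lies in the spectral-gap subspace and the decay estimate above is applicable. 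Bounding the integrand in $L^\infty$ and using $\lambda \le c\mu$ from the definition of $\mathcal T$ then yields
\[
\bigl|\lambda(A\cap T^{-(n+l)}B) - \lambda(A)\mu(B)\bigr|
\;\le\; \|\hat T^n g_{A,l}\|_\infty\,\lambda(B)
\;\le\; c\, C\, \|g_{A,l}\|_{BV}\, r^n\, \mu(B),
\]
which gives exactly the form claimed, provided $\|g_{A,l}\|_{BV}$ is bounded uniformly in $l$ and in $A \in \sigma(\mathcal A^l)$.

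Establishing this uniform bound is the main obstacle, and where I would spend most of the work. Writing $A$ as a disjoint union of rank-$l$ cylinders $C\in\mathcal A^l$, one has the explicit formula $\hat T^l(1_C) = v_C'\cdot 1_{T^lC}$, where $v_C = (T^l|_C)^{-1}$, whence by subadditivity of $\|\cdot\|_{BV}$,
\[
\|\hat T^l(1_A)\|_{BV}
\;\le\; \sum_{\substack{C\in\mathcal A^l \\ C\subseteq A}} \|v_C'\cdot 1_{T^l C}\|_{BV}
\;\le\; \sum_{C\in\mathcal A^l} \|v_C'\cdot 1_{T^l C}\|_{BV}.
\]
The key technical claim is that the last sum, which is the Rychlik quantity for the iterate $T^l$, is bounded uniformly in $l$; this follows from the Lasota--Yorke inequality applied to $\hat T^l 1$ together with the fact (consequence of weak mixing and quasi-compactness) that $\hat T^l 1\to h$ in $BV$, so that $\sup_l\|\hat T^l 1\|_{BV} <\infty$. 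Combining this with $\|h\|_{BV}<\infty$ gives $\|g_{A,l}\|_{BV}\le M$ for a constant $M$ depending only on the system, completing the proof.

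The genuinely delicate point is the iterated Rychlik-condition bound, since naive iteration loses control due to the exponential growth in the cardinality of $\mathcal A^l$ (especially for countable $\mathcal A$, e.g.\ the Gauss partition). The argument has to exploit the cancellations built into the $BV$ norm and the Lasota--Yorke contraction on the variation seminorm rather than a crude triangle-inequality estimate; this is precisely the content of Rychlik's spectral theorem, and the reason the hypothesis of RU (rather than merely piecewise expanding) is needed.
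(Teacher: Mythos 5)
Your overall route is the same as the paper's: express $\lambda(A \cap T^{-(n+l)}B) - \lambda(A)\mu(B)$ via the transfer operator as $\int 1_B \,\hat T^n\bigl(\hat T^l 1_A - \lambda(A)h\bigr)\,d\lambda$, apply Rychlik's exponential decay $\|\hat T^n f - h\int f\,d\lambda\|_{BV} \le C r^n \|f\|_{BV}$ for weakly mixing RU maps, and reduce everything to a bound on $\|\hat T^l 1_A\|_{BV}$ that is uniform in $l$ and in $A \in \sigma(\mathcal A^l)$. You correctly isolate this uniform bound as the crux.

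However, your justification of that bound has a genuine gap. You majorize $\|\hat T^l 1_A\|_{BV}$ by the branch-wise sum $\sum_{C\in\mathcal A^l}\|v_C'\,1_{T^lC}\|_{BV}$ and then claim this sum is bounded uniformly in $l$ because $\hat T^l 1 \to h$ in $BV$, so that $\sup_l\|\hat T^l 1\|_{BV}<\infty$. This implication goes the wrong way: since $\hat T^l 1 = \sum_{C\in\mathcal A^l} v_C'\,1_{T^lC}$, the triangle inequality gives $\|\hat T^l 1\|_{BV} \le \sum_{C}\|v_C'\,1_{T^lC}\|_{BV}$, and there is no reverse inequality --- for countable partitions with heavily overlapping images $T^lC$ (e.g.\ the Gauss map, where $T^lC=[0,1]$ for every full branch) the sum of the individual $BV$ norms can vastly exceed the $BV$ norm of the sum, so controlling $\|\hat T^l 1\|_{BV}$ controls nothing. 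What is actually needed is Rychlik's separate proposition (formula (1) on p.\ 4 of Aaronson--Nakada, quoted as \eqref{Rychlik:Prop1} in the paper): $\sum_{C\in\mathcal A^l}\|1_{T^lC}\,v_C'\,(f\circ v_C)\|_{BV}\le c_0\|f\|_{BV}$ for all $l\ge 1$, which applied with $f\equiv 1$ gives $\|\hat T^l 1_A\|_{BV}\le c_0$ directly (and in fact more cheaply than your route, since one restricts the sum to the cylinders contained in $A$ and never needs the mean-zero reformulation or the $BV$ norm of $h$). This uniform branch-sum estimate is an independent input of Rychlik's theory, not a consequence of quasi-compactness or of the convergence $\hat T^l 1\to h$; once you cite it, the rest of your argument closes exactly as in the paper.
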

The proof of Theorem \ref{RUmap:propertyM} is the almost same as that of Theorem 1(a) in the cited above paper by Aaronson and Nakada.
We provide the proof for reader's convenience.
Before embarking on the proof, we introduce some auxiliary facts. 
For $(T, \mu, \mathcal{A}) \in \mathcal{T}$,  let $\hat{T}: L^1(\lambda) \rightarrow L^1(\lambda)$ be the corresponding Frobenius-Perron operator. 
Then 
\[ \hat{T}^n f = \sum_{A \in \mathcal{A}^n}  1_{T^n A} \, v_A' \, f \circ v_A,\]
where $v_A: T^n A \rightarrow A$ is the inverse of the restriction of $T^n$ to $A$. 

We will need the following two results, which were obtained by Rychlik (\cite{Ry}) (see also formulas (1) and (2) on p.4 in \cite{AaNa}):
\begin{enumerate}
\item If $(T, \mu, \mathcal{A}) \in \mathcal{T}$ is a RU map, then there is $c_0 >0$ such that for any $f \in BV$,
\begin{equation}
\label{Rychlik:Prop1}
\sum_{A \in \mathcal{A}^l} \| 1_{T^l A} \, v_A' \, f \circ v_A \|_{BV}  \leq  c_0 \|f\|_{BV}, \quad  l \geq 1.
\end{equation}
\item If $T$ is a weakly mixing RU map,  then 
there exist $C >0$ and $r \in (0,1)$ so that for any $f \in BV$
\begin{equation}
\label{Rychlik} 
\| \hat{T}^n f - h \int_X f \, d \lambda \|_{BV} \leq C r^n \|f \|_{BV}, \quad  n \geq 1,
\end{equation}
where $h = \frac{d \mu}{d \lambda}.$
\end{enumerate}

\begin{proof}[Proof of Theorem \ref{RUmap:propertyM}]
Let $A \in \sigma(\mathcal{A}^{l})$. Write $A = \bigcup_i A_i$, where $A_i \in \mathcal{A}^l$. 
Note that
\[ \hat{T}^{l} 1_A = \sum_{I \in \mathcal{A}^l} 1_{T^l I} \, v_{I}' \, 1_A \circ v_{I} = \sum_{A_i} 1_{T^l A_i} \, v_{A_i}' \, 1 \circ v_{A_i}, \] 
where $v_{A_i}: T^l A_i \rightarrow A_i$ is given by $v_{A_i} := ({T^l}_{|A_i})^{-1}$. 
Then, by \eqref{Rychlik:Prop1} we get 
\begin{align*}
\| \hat{T}^l 1_A \|_{BV} 
&\leq  \sum_{A_i} \| 1_{T^l A_i} \, v_{A_i}' \, 1 \circ v_{A_i} \|_{BV} \\
&\leq \sum_{I \in \mathcal{A}^l} \| 1_{T^l I} \, v_{I}' \, 1 \circ v_{I} \|_{BV} \leq  c_0 \|1 \|_{BV} =  c_0
\end{align*}
for some $c_ 0 >0$.
Notice that 
$$\lambda (A \cap T^{-(n+l)}B) = \int ( \hat{T}^n (\hat{T}^l 1_A) ) \, 1_B \, d \lambda \quad \text{and} \quad 
 \lambda (A) \mu(B) = \int \left( h \int \hat{T}^{l} 1_A \, d \lambda \right) 1_B \,d \lambda.$$
Using \eqref{Rychlik} with $f = \hat{T}^l 1_A$, we obtain 
\[  \| \hat{T}^n (\hat{T}^l 1_A) - h \int \hat{T}^{l} 1_A \, d \lambda\|_{\infty} \leq  \| \hat{T}^n (\hat{T}^l 1_A) - h \int \hat{T}^{l} 1_A \, d \lambda\|_{BV} \leq Cr^n \| \hat{T}^l 1_A \|_{BV} \leq  c_0 C r^n. \]
Then,
\begin{align*}
 &|\lambda (A \cap T^{-(n+l)}B) - \lambda(A) \mu(B)| 
 = \left| \int \left( \hat{T}^n (\hat{T}^l 1_A) - h \int \hat{T}^{l} 1_A \, d \lambda \right) 1_B \, d \lambda \right| \\  
 &\quad  \leq  \| \hat{T}^n (\hat{T}^l 1_A) - h \int \hat{T}^{l} 1_A \, d \lambda\|_{\infty} \lambda (B) \leq c_0 C r^n \lambda(B) = \mu(B) O(r^n).
\end{align*}
\end{proof}

We conclude this section by proving that if $(T, \mu, \mathcal{A}) \in \mathcal{T}$ has property M, then it is exact.

A (non-invertible) measure preserving transformation $T$ on a probability space $(X, \mathcal{B}, \mu)$ is called  {\em exact endomorphism} if 
\[ \bigcap_{n=0}^{\infty} T^{-n} \mathcal{B} = \{ \emptyset, X\} \,\, (\bmod \, \mu).\]

Rokhlin (\cite{Rok}) showed that if a measure preserving transformation $T$ on a probability space $(X, \mathcal{B}, \mu)$ is exact, then $T$ has completely positive entropy (meaning that each of its non-trivial factors has positive entropy). 

\begin{Theorem}
\label{exactness}
If $(T, \mu, \mathcal{A}) \in \mathcal{T}$ has property M, then $T$ is exact. 
\end{Theorem}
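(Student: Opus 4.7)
The plan is to show directly that the tail $\sigma$-algebra $\bigcap_{n=0}^\infty T^{-n}\mathcal{B}$ is trivial modulo $\mu$, using property M to turn the ``asymptotic independence'' estimate into an exact independence statement for tail events.

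First I would fix $B \in \bigcap_{n=0}^\infty T^{-n}\mathcal{B}$ and, for each $n \geq 0$, pick $B_n \in \mathcal{B}$ with $B = T^{-n}B_n$ up to $\mu$-null sets. Since $T$ preserves $\mu$, we have $\mu(B_n) = \mu(T^{-n}B_n) = \mu(B)$ for every $n$. Now, for any $l \in \mathbb{N}$ and any $A \in \sigma(\mathcal{A}^l)$, apply property M to the pair $(A, B_{n+l})$ to obtain
\begin{equation*}
\bigl| \lambda(A \cap T^{-(n+l)} B_{n+l}) - \lambda(A)\,\mu(B_{n+l}) \bigr| \leq b_n.
\end{equation*}
Since $T^{-(n+l)} B_{n+l} = B$ and $\mu(B_{n+l}) = \mu(B)$, this simplifies to $|\lambda(A \cap B) - \lambda(A)\mu(B)| \leq b_n$. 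Letting $n \to \infty$ and using $b_n \to 0$ (which follows from summability) yields $\lambda(A \cap B) = \lambda(A)\mu(B)$ for every $A \in \sigma(\mathcal{A}^l)$.

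Next I would extend this identity to all $A \in \mathcal{B}$. The collection $\bigcup_{l \geq 1}\sigma(\mathcal{A}^l)$ is an algebra (the family $\mathcal{A}^l$ is increasing in $l$) and, by the defining condition $\mathcal{B} = \bigvee_{j=0}^\infty T^{-j}\sigma(\mathcal{A}) \bmod \lambda$ of the class $\mathcal{T}$, it generates $\mathcal{B}$ modulo $\lambda$ (equivalently modulo $\mu$, since $\lambda$ and $\mu$ are equivalent). Both sides of the identity $A \mapsto \lambda(A \cap B)$ and $A \mapsto \lambda(A)\mu(B)$ are finite measures, so by a standard monotone-class / Dynkin $\pi$-$\lambda$ argument they agree on all of $\mathcal{B}$.

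Finally, taking $A = B$ gives $\lambda(B) = \lambda(B)\mu(B)$, hence $\lambda(B) = 0$ or $\mu(B) = 1$. Because $\mu$ and $\lambda$ are equivalent, $\lambda(B)=0$ is equivalent to $\mu(B)=0$, so $B$ is trivial modulo $\mu$ and $T$ is exact. The only mildly delicate step is the passage from $\sigma(\mathcal{A}^l)$ to $\mathcal{B}$, but this is handled routinely by the generating hypothesis built into the definition of $\mathcal{T}$; the heart of the argument is simply that property M forces each tail event to be $\lambda$-independent of arbitrarily fine cylinder partitions.
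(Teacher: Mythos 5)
Your proof is correct, and it takes a genuinely different (and arguably cleaner) route than the paper's. The paper argues by contradiction: it approximates a putative nontrivial tail event $B$ from inside by cylinder sets $B_n \in \sigma(\mathcal{A}^n)$ with $\lambda(B_n \triangle B) \to 0$, writes $B^c = T^{-2n}C_n$, applies property M to the pair $(B_n, C_n)$ to force $\mu(C_n) \to 0$, and then uses measure preservation together with a Ces\`{a}ro average of the $\mu(T^{-2n}C_n)$ to conclude $\mu(B^c)=0$. You instead keep $A$ as an arbitrary cylinder event and put the tail representation into the \emph{second} slot of property M, obtaining the exact independence statement $\lambda(A \cap B) = \lambda(A)\,\mu(B)$ for all $A \in \bigcup_l \sigma(\mathcal{A}^l)$, which you then upgrade to all of $\mathcal{B}$ by a $\pi$-$\lambda$ argument and close by taking $A = B$. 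What your approach buys is a stronger intermediate statement (every tail event is $\lambda$-independent of every Borel set, and in particular $\lambda(B)=\mu(B)$ by taking $A = X$) and no need for the approximation $\lambda(B_n \triangle B)\to 0$ or the Ces\`{a}ro step; what the paper's approach buys is that it avoids the monotone-class extension entirely by choosing the approximating sets to already lie in the cylinder algebra. Both hinge on the same core mechanism: feeding a preimage representative of a tail event into property M and letting the gap $n \to \infty$.
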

\begin{proof}
Assume that $T$ is not exact. 
Then there is a measurable set $B \in \bigcap_{n=0}^{\infty} T^{-n} \mathcal{B}$ with $0 < \mu(B) < 1$ (and so $0 < \lambda(B) < 1$).
One can find a sequence of measurable sets $B_n \in \bigvee_{i=0}^{n-1} \sigma (T^{-i} \mathcal{A})$ such that $\lambda(B_n \triangle B) \rightarrow 0$ and $\lambda (B_n) \geq \epsilon_0$ for some $\epsilon_0 > 0$ if $n$ is sufficiently large.  
Since $B \in \bigcap_{n=0}^{\infty} T^{-n} \mathcal{B}$, for each $n \in \mathbb{N}$ there exists a measurable set $C_n \in \mathcal{B}$ such that $T^{-2n} C_n = B^c$. 
Note that 
\[ \lambda (B_n \cap T^{-2n} C_n) = \lambda (B_n \cap B^c) \leq \lambda (B_n \triangle B) \rightarrow 0.\]
By \eqref{eq:propertyM}, we have that 
\[ -b_n + \lambda (B_n \cap T^{-2n} C_n) \leq \lambda(B_n) \mu(C_n) \leq b_n + \lambda(B_n \cap T^{-2n} C_n).\]
Since $\lambda(B_n) \geq \epsilon_0$ for sufficiently large $n$, $\mu(C_n) \rightarrow 0$.
Then we have 
\[ 0 = \lim_{N \rightarrow \infty} \frac{1}{N} \sum_{n=1}^n \mu (C_n) = \lim_{N \rightarrow \infty} \frac{1}{N} \sum_{n=1}^n \mu (T^{-2n}C_n) = \mu(B^c),\]
so $\mu(B) =1$, which contradicts the assumption that $\mu(B) < 1$ 
\end{proof}

\subsection{Examples}
\label{sec:appendix}
In this subsection, we discuss some examples of number-theoretic maps which, along with an appropriate partition, belong to the class $\mathcal{T}$ and possess properties E and M. 

\subsubsection{Times $b$ maps: $T_b x = bx  \mod 1$, $b \geq 2$ an integer} 
Note that the map $T_b x = b x \, \bmod \, 1$ is $\lambda$-preserving and ergodic. 
It is well-known that the entropy $h(T_b)$ is  $\log b$. \\
Let 
\[ \mathcal{A}_b = \left\{ \left[0, \frac{1}{b} \right), \cdots, \left[ \frac{b-1}{b}, 1 \right)  \right\}. \]
Then clearly $(T_b, \lambda, \mathcal{A}_b) \in \mathcal{T}$. \\
It is also not hard to check that 
\begin{itemize}
\item for any $n \in \mathbb{N}$, if $I$ is a cylinder of rank $n$, then $\lambda(I) = \frac{1}{b^n}$, so the set $\mathfrak{b}^n$ of bad atoms is empty, which clearly implies that  $(T_b, \lambda, \mathcal{A}_b)$ has property E.
\item  if $A \in \sigma (\mathcal{A}_b^l)$, $l \in \mathbb{N},$ and $B \in \mathcal{B}$, then 
\[ \lambda (A \cap T_b^{-(n+l)} B) = \lambda (A) \lambda (B) \, \, \text{for any } n \geq 0,\]
so  $(T_b, \lambda, \mathcal{A}_b)$ has property M.
\end{itemize}

\subsubsection{Gauss map: $T_G x = \frac{1}{x} \, \bmod 1$}
It is well-known that Gauss map $T_G$ is ergodic with respect to Gauss measure $d \mu_G = \frac{1}{\log 2} \frac{dx}{1+x}.$  
Let
\begin{equation}
\mathcal{A}_G = \left\{  \left( \frac{1}{n+1}, \frac{1}{n} \right]: n \in \mathbb{N} \right\}.
\end{equation}
It is not hard to see that $(T_G, \mu_G, \mathcal{A}_G) \in \mathcal{T}$. 

To show that $(T_G, \mu_G, \mathcal{A}_G)$ has property E, recall first some basic facts about continued fractions.
Let $x \in [0,1)$ be an irrational number. Let $x = [x_1, x_2, x_3, \cdots ]$ be the continued fraction expansion of $x$. Let $p_n =p_n(x), q_n = q_n(x)$ denote the numerator and denominator of the $n$th convergent:
$$\frac{p_n}{q_n} = [x_1, x_2, \cdots, x_n].$$
Let $I_n(x)$ denote the cylinder set of rank $n$ containing $x$. The length of $I_n(x)$ is given by
\[ \lambda(I_n(x)) = \left| \frac{p_n}{q_n} - \frac{p_n + p_{n+1}}{q_n + q_{n+1}} \right| = \frac{1}{q_n (q_n + q_{n+1})}.\]
It is known (see, for instance, Section 14 in \cite{Kh} or formula (4.18) of Chapter 1 in \cite{Bi}) that for a.e. $x$,
\begin{equation}
\label{KLconst}
 \lim_{n \rightarrow \infty} \frac{1}{n} \log q_n(x) = \frac{\pi^2}{ 12 \log 2}.
 \end{equation}
Then Shannon-McMillan-Breiman Theorem (Theorem \ref{entropy:SMB}) implies that for a.e. $x$,
\[ h(T_G) =  \lim_{n \rightarrow \infty} - \frac{\log \mu_G (I_n(x))}{n}  
=  \lim_{n \rightarrow \infty} - \frac{\log \lambda(I_n(x))}{n}  = \lim_{n \rightarrow \infty} \frac{1}{n} \log q_n(x) (q_n(x) + q_{n-1} (x)) 
= \frac{\pi^2}{ 6 \log 2}.\]
The fact that $(T_G, \mu_G, \mathcal{A}_G)$ has property E is a consequence of the following theorem.
\begin{Theorem}[cf. Theorem 2 in \cite{FaWuShLi} or Theorem B in \cite{Ta}]
For any $\delta >0$, there exist $N > 0, B>0, \alpha >0$ such that for all $n \geq N$,
\begin{equation}
\label{eq:10}
\lambda \left\{ x: \left| \frac{1}{n} \log q_n(x) - \frac{\pi^2}{12 \log 2} \right| \geq \delta \right\} \leq B \exp (- \alpha n).
\end{equation}
\end{Theorem}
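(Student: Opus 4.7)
The plan is to reduce the claim to a large deviation principle for Birkhoff sums of the observable $\phi(x) = -\log x$ under the Gauss map $T_G$, and then obtain that LDP via the thermodynamic formalism for the Gauss-Kuzmin-Wirsing transfer operator.

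First I would establish the deterministic comparison
\[
\log q_n(x) = -\sum_{k=0}^{n-1}\log(T_G^k x) + O(1),
\]
with error bounded uniformly in $n$ and $x$. This rests on the chain-rule identity $|(T_G^n)'(x)| = \prod_{k=0}^{n-1}(T_G^k x)^{-2}$ (since $|T_G'(x)| = 1/x^2$) combined with the classical bounded-distortion estimate $|(T_G^n)'(x)| \asymp q_n(x)^2$ on each cylinder of rank $n$. A direct integration gives
\[
\int \phi\,d\mu_G \;=\; \frac{1}{\log 2}\int_0^1 \frac{-\log x}{1+x}\,dx \;=\; \frac{\pi^2}{12\log 2},
\]
so the theorem reduces to proving
\[
\lambda\bigl\{\, x : \bigl|\tfrac{1}{n}S_n\phi(x) - \tfrac{\pi^2}{12\log 2}\bigr| \geq \delta/2 \,\bigr\} \;\leq\; B e^{-\alpha n}
\]
for the Birkhoff sums $S_n\phi(x) = \sum_{k=0}^{n-1}\phi(T_G^k x)$.

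For the latter bound I would introduce the twisted transfer operator
\[
\mathcal{L}_t f(x) \;=\; \sum_{m=1}^{\infty}\frac{1}{(m+x)^{2-t}}\, f\!\left(\tfrac{1}{m+x}\right),
\]
whose iterates implement the exponential moments via $\int e^{tS_n\phi}\,d\lambda = \int \mathcal{L}_t^n 1\,d\lambda$. On Mayer's space of functions holomorphic and bounded on a disk containing $[0,1]$, the operator $\mathcal{L}_0$ is nuclear with a simple leading eigenvalue $1$ and a spectral gap; analytic perturbation theory then yields, for $t$ in a real neighborhood of $0$, a simple dominant eigenvalue $\rho(t)$ depending analytically on $t$. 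Consequently the pressure $\Lambda(t) := \lim_{n \to \infty}\frac{1}{n}\log \int e^{tS_n\phi}\,d\lambda = \log\rho(t)$ is real-analytic with $\Lambda(0)=0$, $\Lambda'(0) = \int\phi\,d\mu_G$, and $\Lambda''(0) > 0$ (the latter because $\phi$ is unbounded and thus cannot be cohomologous to a bounded constant). An exponential Chebyshev argument, or equivalently the Gärtner-Ellis theorem, then produces the required exponential bound with $\alpha = \alpha(\delta)$ arising as the Legendre transform of $\Lambda$ at $\pm\delta/2$.

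The principal technical obstacle is the unboundedness of $\phi = -\log x$ near $0$, which blocks a direct appeal to standard Ruelle-Perron-Frobenius theory on spaces of bounded continuous functions; it is for this reason that one works in Mayer's holomorphic framework, where $\mathcal{L}_t$ remains well-defined and holomorphic in $t$ on a complex neighborhood of $0$ and the spectral gap persists under perturbation. An alternative route that avoids holomorphic machinery is to truncate $\phi$ at level $\log N$, carry out the LDP for the truncated sum by the same transfer-operator method (now with $\phi$ bounded), and control the probability that some partial quotient $a_k(x)$ exceeds $N$ for $k \leq n$ using the cylinder estimate $\mu_G\{a_k = m\} \asymp m^{-2}$ together with the exponential mixing of $T_G$ supplied by property M; with $N$ chosen to grow slowly with $n$, the tail contribution is absorbed into the exponential rate.
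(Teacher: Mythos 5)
The paper does not actually prove this statement: it is imported verbatim from the literature (Theorem 2 of \cite{FaWuShLi}, Theorem B of \cite{Ta}), so there is no internal proof to compare yours against. Your proposal reconstructs what is essentially the standard argument behind those references, and it is correct in outline. The reduction $\log q_n(x) = -\sum_{k=0}^{n-1}\log(T_G^k x) + O(1)$ follows from the telescoping identity $\prod_{k=0}^{n-1} T_G^k x = |q_{n-1}x - p_{n-1}|$, which lies in $\left(\tfrac{1}{q_n+q_{n-1}}, \tfrac{1}{q_n}\right)$, so the error is at most $\log 2$ uniformly in $n$ and $x$; the value $\frac{1}{\log 2}\int_0^1\frac{-\log x}{1+x}\,dx = \frac{\pi^2}{12\log 2}$ is the L\'evy constant; and the exponential Chebyshev bound through the leading eigenvalue $\rho(t)$ of Mayer's operator $\mathcal{L}_t$ yields a positive rate because $t(\mu+\delta)-\Lambda(t) = t\delta + O(t^2) > 0$ for small $t>0$ (and symmetrically for the lower tail with small $t<0$, where the weight $(m+x)^{t}$ still converges since one only needs $t<1$). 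Two remarks. First, the assertion $\Lambda''(0)>0$ is not needed for a Chernoff-type upper bound, and your justification of it is not a proof as stated: non-degeneracy of the asymptotic variance requires ruling out that $\phi-\int\phi\,d\mu_G$ is a coboundary with a possibly \emph{unbounded} transfer function (one standard fix evaluates Birkhoff sums at the periodic points $[\overline{m}]$, where $\tfrac1n S_n\phi \to \log\frac{m+\sqrt{m^2+4}}{2}$ takes more than one value); since you never use $\Lambda''(0)>0$, simply omit it. Second, you are right that $\phi(z)=-\log z$ does not itself belong to Mayer's holomorphic space, so analyticity of $t\mapsto\mathcal{L}_t$ and of $\rho(t)$ must be established directly from the holomorphy of the weights $(m+z)^{t-2}$ on $\{\Re(m+z)>0\}$ rather than by perturbing with a multiplication operator; your write-up implicitly does this, and making it explicit closes the only delicate point of the argument.
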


The following result implies that $T_G$ has property M.
\begin{Theorem}[cf. \cite{Kh}, \cite{Ku}, \cite{Le}]
\label{Khinchine}
 There is a constant $\gamma$ such that for $A \in \sigma (\mathcal{A}_G^l)$, $B \in \mathcal{B}$ and  for $n \in \mathbb{N}$
\begin{equation}
\label{CF:propertyB}  
\left| \lambda(A \cap T_G^{-n-l} B) - \lambda(A) \mu_G(B)  \right| =   \lambda(A) \, \mu_G(B) \, O(e^{- \gamma n}).
\end{equation}
\end{Theorem}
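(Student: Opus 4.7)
The plan is to deduce the estimate from the spectral gap of the Gauss--Kuzmin--Wirsing transfer operator. Let $\hat{T}_G:L^1(\lambda)\to L^1(\lambda)$ denote the Frobenius--Perron operator of $T_G$, and write $h = d\mu_G/d\lambda$. By duality, both quantities in \eqref{CF:propertyB} can be realized as integrals against $\mathbf{1}_B$:
\[
\lambda(A\cap T_G^{-(n+l)}B) - \lambda(A)\mu_G(B) \;=\; \int_B \bigl( \hat{T}_G^n f_A - \lambda(A)\, h \bigr) \, d\lambda, \qquad f_A := \hat{T}_G^l \mathbf{1}_A,
\]
so the task reduces to bounding $\hat{T}_G^n f_A - \lambda(A)\, h$ pointwise by $C\rho^n\, \lambda(A)\, h$ for some $\rho\in(0,1)$ and an absolute constant $C$, uniformly in $A$ and $l$.

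The first technical ingredient is the classical distortion estimate for continued-fraction cylinders. For an atom of $\mathcal{A}_G^l$ determined by partial quotients $(c_1,\dots,c_l)$, the inverse-branch formula gives
\[
f_A(y) \;=\; \frac{1}{(q_l + q_{l-1}\, y)^{2}},
\]
where $p_j/q_j = [c_1,\dots,c_j]$ are the convergents; R\'enyi's distortion lemma (see Section 14 of \cite{Kh}) shows that $f_A(y_1)/f_A(y_2)$ is bounded by an absolute constant, independent of $(c_1,\dots,c_l)$ and of $l$, so $f_A/\lambda(A)$ is uniformly comparable to $h$ with uniformly bounded log-Lipschitz modulus; summing over atoms extends this to every $A\in\sigma(\mathcal{A}_G^l)$. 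The second ingredient is the quantitative Gauss--Kuzmin theorem in \emph{multiplicative} form: there exist $C>0$ and $\rho\in(0,1)$ such that whenever $g$ is a probability density on $[0,1]$ with $g/h$ in a fixed bounded log-Lipschitz set,
\[
\bigl| \hat{T}_G^n g(y) - h(y) \bigr| \;\le\; C\rho^n\, h(y) \quad \text{for all } y \in [0,1].
\]
Applying this to $g = f_A/\lambda(A)$ and integrating against $\mathbf{1}_B$ yields \eqref{CF:propertyB} with $\gamma = -\log\rho$.

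The main obstacle is obtaining this multiplicative form of the Gauss--Kuzmin estimate, since a plain sup-norm spectral gap would only deliver $\lambda(A)\lambda(B)\cdot O(\rho^n)$ rather than the sharper $\lambda(A)\mu_G(B)\cdot O(\rho^n)$ we want. The upgrade is classical: the second iterate $T_G^2$ is a uniformly expanding RU map (the first iterate fails only because $|T_G'|=1$ at the right endpoint of $(1/2,1]$), so Rychlik's theorem---the same input that drives Theorem \ref{RUmap:propertyM}---supplies a spectral gap for $\hat{T}_G$ on $BV$; a standard positivity/cone argument (equivalently, contraction of the Hilbert projective metric on the cone of densities uniformly comparable to $h$, as in L\'evy's refinement \cite{Le} of Kuzmin's original theorem \cite{Ku}) then converts the additive spectral gap into the desired multiplicative form.
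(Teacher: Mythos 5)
The paper does not actually prove this statement: it is quoted as a classical result with references to Khintchine, Kuzmin and L\'evy, so your transfer--operator argument is a genuinely different (and more self-contained) route, and in outline it is correct. The two essential ingredients are exactly right: the duality identity reducing everything to $\hat{T}_G^n f_A - \lambda(A)h$ with $f_A = \hat{T}_G^l \mathbf{1}_A$, and the R\'enyi distortion bound $f_A(y) = (q_l+q_{l-1}y)^{-2}$, which gives $f_A/\lambda(A) \in [1/2,2]$ with uniformly bounded variation and hence $\|f_A\|_{BV} = O(\lambda(A))$, the estimate that a bare appeal to Rychlik's inequality \eqref{Rychlik:Prop1} (as in the proof of Theorem \ref{RUmap:propertyM}) would not deliver. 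One imprecision: you locate the ``main obstacle'' in upgrading $\lambda(A)\lambda(B)$ to $\lambda(A)\mu_G(B)$, but since $\tfrac{1}{2\log 2} \le h \le \tfrac{1}{\log 2}$ these two quantities are comparable within absolute constants, so the additive sup-norm gap applied to $f_A/\lambda(A)$ already suffices and the Hilbert-metric/cone machinery is overkill; the genuine extra content of your argument over Theorem \ref{RUmap:propertyM} is the factor $\lambda(A)$, not the factor $\mu_G(B)$. The remaining point worth flagging is the one you note in passing: $T_G$ itself fails uniform expansion (so Theorem \ref{RUmap:propertyM} does not apply directly, which is presumably why the paper cites the classics instead), and passing through $T_G^2$ requires power-boundedness of $\hat{T}_G$ on $BV$ plus exactness of $T_G$ to exclude peripheral spectrum; both are standard but should be said. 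With those details filled in, your proof is a valid modern replacement for the classical citation, and it parallels the paper's own treatment of RU maps while strengthening the conclusion to include the factor $\lambda(A)$.
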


\subsubsection{$\beta$-transformations: $T_{\beta} x = \beta x \,  \bmod 1$, $\beta >1$ a non-integer} 
\label{betamap}
R\'{e}nyi showed that there is  a $T_{\beta}$-preserving and ergodic measure $\mu_{\beta}$ such that $1 - \frac{1}{\beta} \leq \frac{d \mu_{\beta}}{d \lambda} \leq \frac{1}{1 -1 / \beta}$. (See \cite{Re} and also see \cite{Pa2} and \cite{Ge}.) \\
Let
\[ \mathcal{A}_{\beta} = \left\{ \left[ 0, \frac{1}{\beta} \right), \cdots, \left[ \frac{[\beta]-1}{\beta}, \frac{[\beta]}{\beta} \right),  \left[ \frac{[\beta]}{\beta}, 1 \right)  \right\}. \]
Clearly $(T_{\beta}, \mu_{\beta}, \mathcal{A}_{\beta}) \in \mathcal{T}$. 

It is known (\cite{Pa1}) that $ h(T_{\beta}) = \log \beta$.
For $x \in[0,1)$, let $I_n(x)$ denote the cylinder of rank $n$ containing $x$.  
Then Shanon-McMillan-Breiman theorem implies that for a.e. $x$,
\[ \lim_{n \rightarrow \infty} - \frac{\log \lambda(I_n(x))}{n} = \lim_{n \rightarrow \infty} - \frac{\log \mu_{\beta} (I_n(x))}{n} =h(T_{\beta}) = \log \beta.\]

The fact that $(T_{\beta}, \mu_{\beta}, \mathcal{A}_{\beta})$ has property E is the consequence of the following theorem.
\begin{Theorem}
\label{proof_propertyA_beta}
For $\epsilon >0$,
\[ \lambda \left\{x:  \left| - \frac{\log \lambda(I_n(x))}{n} - \log \beta \right| > \epsilon \right\} \leq \frac{\beta}{\beta -1 } e^{- \epsilon n}.\]
\end{Theorem}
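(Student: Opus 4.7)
The plan is to combine uniform expansion (which makes the lower tail vacuous) with Markov's inequality (which converts the upper tail into a cylinder count) and a short recursive estimate for $|\mathcal{A}_\beta^n|$. First, since $|T_\beta'|\equiv\beta$ on each continuity interval, the restriction $T_\beta^n|_A$ is a bijection with Jacobian $\beta^n$ onto $T_\beta^n(A)\subseteq[0,1)$, so $\lambda(A)\le\beta^{-n}$ for every $A\in\mathcal{A}_\beta^n$. Consequently $-\log\lambda(I_n(x))/n\ge\log\beta$ identically in $x$, and the event in the theorem reduces to the one-sided tail
\[
F_n:=\{x:\lambda(I_n(x))<\beta^{-n}e^{-\epsilon n}\}.
\]

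Next I would apply Markov's inequality to the piecewise-constant function $\phi_n(x):=(\beta^n\lambda(I_n(x)))^{-1}$. Since $F_n=\{\phi_n>e^{\epsilon n}\}$ and $\int\phi_n\,d\lambda=\beta^{-n}\sum_{A\in\mathcal{A}_\beta^n}1=\beta^{-n}|\mathcal{A}_\beta^n|$, I obtain
\[
\lambda(F_n)\le e^{-\epsilon n}\beta^{-n}|\mathcal{A}_\beta^n|,
\]
so the theorem reduces to the counting estimate $|\mathcal{A}_\beta^n|\le\beta^{n+1}/(\beta-1)$.

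To prove this counting bound I use the structure of $\beta$-cylinders. Every non-empty $A\in\mathcal{A}_\beta^n$ satisfies $T_\beta^n(A)=[0,r_A)$ for some $r_A\in(0,1]$, so $\lambda(A)=\beta^{-n}r_A$ and $\sum_A r_A=\beta^n$. Write $N_n:=|\mathcal{A}_\beta^n|$, let $f_n$ denote the number of \emph{full} cylinders (those with $r_A=1$) at level $n$, and set $s_n:=\sum_{r_A<1}(1-r_A)=N_n-\beta^n$ for the total ``defect from fullness'' at level $n$. A short case analysis on how admissible digits $d\in\{0,1,\dots,[\beta r_A]\}$ refine $A$ shows that every parent cylinder at level $n$ contributes exactly one non-full child at level $n+1$: a full parent produces a child with new parameter $r=\beta-[\beta]$, and a non-full parent with parameter $r_A$ produces a child with $r=T_\beta r_A$. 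Since $1-(\beta-[\beta])<1$ and $1-T_\beta r_A\le1$, summing defects over the $N_n$ new non-full children yields
\[
s_{n+1}\le f_n\cdot(1-(\beta-[\beta]))+(N_n-f_n)\le f_n+(N_n-f_n)=N_n=\beta^n+s_n.
\]
Iterating from $s_0=0$ gives $s_n\le(\beta^n-1)/(\beta-1)$, hence $N_n\le\beta^{n+1}/(\beta-1)$. Plugging this into the Markov bound gives exactly $\lambda(F_n)\le\frac{\beta}{\beta-1}e^{-\epsilon n}$.

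The main obstacle is the counting step. The lower bound $N_n\ge\beta^n$ is immediate from $r_A\le1$ and $\sum_A r_A=\beta^n$, but the matching upper bound is delicate because individual widths $r_A$ of non-full cylinders can be arbitrarily small; the recursive argument dodges this by bounding the aggregate defect $s_n$ rather than any individual $r_A$, and the key structural fact that exactly one non-full child appears per parent is what keeps the recursion linear and permits the geometric-series estimate.
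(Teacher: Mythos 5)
Your proof is correct and follows essentially the same route as the paper: both reduce the two-sided event to the upper tail via $\lambda(A)\le\beta^{-n}$ and then bound the measure of the small cylinders by (number of rank-$n$ cylinders)$\times\beta^{-n}e^{-\epsilon n}$ (your Markov step for $\phi_n$ is just this union bound in disguise). The only difference is that the paper cites R\'enyi's bound $|\mathcal{A}_\beta^n|\le\frac{\beta}{\beta-1}\beta^n$ directly, whereas you supply a correct self-contained proof of it via the defect recursion $s_{n+1}\le N_n$ (noting only that each parent has \emph{at most} one non-full child, since the last child can be empty when $\beta r_A\in\mathbb{Z}$).
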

\begin{proof}
It is obvious that if $A$ is a cylinder of rank $n$, then $\lambda (A) \leq \frac{1}{\beta^n}$. In \cite{Re}, R\'{e}nyi  showed that number of cylinders of rank $n$ is less than or equal to $\frac{\beta}{\beta -1} \beta^n$. Then,
\[ \lambda \{x: x \in A \text{ for some cylinder }  A \text{ of rank } n  \text{ such that } \lambda(A) < \beta^{-n} e^{- \epsilon n} \} \leq \frac{\beta}{\beta -1} e^{- \epsilon n}.\]
\end{proof}
It is not hard to check that $(T_{\beta}, \mu_{\beta}, \mathcal{A}_{\beta})$ is a RU map. Also, it is known that $T_{\beta}$ is strongly mixing (see Section 4.5 in \cite{Rok1} or Theorem 1C in \cite{Ph1}), hence weakly mixing, so $(T_{\beta}, \mu_{\beta}, \mathcal{A}_{\beta})$ has property M by Theorem \ref{RUmap:propertyM}.

\subsubsection{Linear $\bmod \, 1$ transformations.} 

Times $b$ maps $T_b$ and $\beta$-transformations $T_{\beta}$ belong to a general family of maps $T_{\beta,\gamma}$:
$$T_{\beta, \gamma} x = \beta x + \gamma \bmod 1, \,\, (\beta >1, 0 \leq \gamma<1).$$
It is known that if either $\beta \geq 2, 0 \leq \gamma <1$ or $1 < \beta < 2, \gamma = 0$, then there is a $T_{\beta, \gamma}$-invariant and ergodic measure  $\mu_{\beta, \gamma}$ such that $1/c \leq \frac{d \mu_{\beta, \gamma}}{ d \lambda} \leq c$ for some $c \geq 1$. (See \cite{Hal} and \cite{Wil}.) 
Also, it is known  (\cite{Pa1}) that $ h(T_{\beta, \gamma}) = \log \beta.$
On the other hand, Parry showed that if $1 < \beta <2$ and $\gamma \ne 0$, then  $T_{\beta, \gamma}$ may not have  an invariant measure, which is equivalent to the Lebesgue measure. (See \cite{Pa2}). In what follows we will assume that either $\beta \geq 2, 0 \leq \gamma <1$, or $1 < \beta < 2, \gamma = 0$. 

Let 
\[ \mathcal{A}_{\beta, \gamma} = \left\{ \left[ 0, \frac{1 - \gamma}{\beta} \right),  \left[ \frac{1 - \gamma}{\beta},  \frac{2 - \gamma}{\beta} \right)\cdots, \left[ \frac{[\beta + \gamma] -1 - \gamma}{\beta}, \frac{[\beta + \gamma] - \gamma}{\beta} \right),  \left[ \frac{[\beta + \gamma] - \gamma}{\beta}, 1 \right)  \right\}. \]
One readily checks that $(T_{\beta, \gamma}, \mu_{\beta, \gamma}, \mathcal{A}_{\beta, \gamma}) \in \mathcal{T}$. 

Now let us show that $(T_{\beta, \gamma}, \mu_{\beta, \gamma}, \mathcal{A}_{\beta, \gamma})$ has properties E and M. 
(Note that we only need to consider the case that $\beta \geq 2, 0 \leq \gamma <1$.)

It is not hard to check that $(T_{\beta, \gamma}, \mu_{\beta, \gamma}, \mathcal{A}_{\beta, \gamma})$ is a RU map. Also it is known that $T_{\beta, \gamma}$ is strong mixing (indeed, it is exact (\cite{Wil})), hence weakly mixing,  so $(T_{\beta, \gamma}, \mu_{\beta, \gamma}, \mathcal{A}_{\beta, \gamma})$ has property M by Theorem \ref{RUmap:propertyM}.

One also can check that if $\beta \geq 2$, the number of cylinders of rank $n$ for $(T_{\beta, \gamma}, \mu_{\beta, \gamma}, \mathcal{A}_{\beta, \gamma})$  is $O(\beta^n)$; indeed, for $\beta =2$, the number of cylinders of rank $n$ is less than or equal to $\sum_{i=1}^n (2^i +1) = 2^{n+1} -1$ and for other cases ($\beta >2$), this follows from  Lemma 2.1 in \cite{Wil}. Then by arguing as in the proof of Theorem \ref{proof_propertyA_beta} in Subsection \ref{betamap}, one concludes that 
 $(T_{\beta, \gamma}, \mu_{\beta, \gamma}, \mathcal{A}_{\beta, \gamma})$ has property E.
 

\section{The proof of main theorem}
\label{sec:pointwisejointergodicity}
In this section we prove the main theorem of this article, which we formulate again for reader's convenience. 
Note that the properties E and M, which appear in the formulation, were introduced in Section \ref{sec:Intro}. (See also Section \ref{sec:3}.)

\begin{Theorem}[Theorem \ref{intro-main-thm-pje}]
\label{j.e.:pointwise}
Let $T_i: [0,1] \rightarrow [0,1], 1 \leq i \leq k$, be measurable transformations such that for each $i = 1, 2 \dots, k$, there is a partition $\mathcal{A}_i$ and a $T_i$-invariant ergodic measures $\mu_i$ with $(T_i, \mu_i, \mathcal{A}_i) \in \mathcal{T}$. 
Suppose that for each $i = 1, 2 \dots, k$, $(T_i, \mu_i, \mathcal{A}_i)$ satisfies properties E and M. If entropy values $h(T_1), \cdots, h(T_k)$ are distinct,
then $T_1, \dots, T_k$ are pointwise jointly ergodic with respect to $(\mu_1, \dots, \mu_k)$.  
\end{Theorem}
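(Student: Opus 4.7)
The plan is to verify condition (ii) of Theorem \ref{lem:pje_gp}: for $\lambda$-a.e.\ $x$, the orbit $(T_1^n x,\ldots,T_k^n x)_{n\ge 0}$ should be equidistributed in $[0,1]^k$ with respect to $\mu_1\times\cdots\times\mu_k$. Using the same density argument that appeared in the proof of $(ii)\Rightarrow(i)$ of Theorem \ref{lem:pje_gp} — approximate a continuous $F=\prod_i f_i$ on $[0,1]^k$ by step functions $\prod_i g_i$, with $g_i$ constant on the atoms of $\mathcal A_i^l$ (which lie $L^1(\mu_i)$-dense since $\mathcal A_i$ generates $\mathcal B$ under the dynamics) and control the remainder by the single-$T_i$ pointwise ergodic theorem — and a countable union over the choices $(l_i,A_i)$, the theorem reduces to the pointwise statement
\[
S_N(x):=\frac1N\sum_{n=0}^{N-1}\prod_{i=1}^k \mathbf 1_{A_i}(T_i^n x)\;\longrightarrow\; P:=\prod_{i=1}^k\mu_i(A_i)
\qquad\text{for $\lambda$-a.e.\ }x,
\]
for each fixed choice of $l_i\in\mathbb N$ and cylinders $A_i\in\mathcal A_i^{l_i}$.

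The next step is to run the $L^2$ variance/Borel--Cantelli method. I need two things: (a) mean convergence $\int S_N\,d\lambda\to P$, equivalently $\lambda\bigl(\bigcap_i T_i^{-n}A_i\bigr)\to P$; and (b) a quantitative variance bound $\int(S_N-P)^2\,d\lambda=O(N^{-\delta})$ for some $\delta>0$. Given (b), Chebyshev and Borel--Cantelli along the subsequence $N_j=\lfloor j^{2/\delta}\rfloor$ give $S_{N_j}(x)\to P$ for $\lambda$-a.e.\ $x$. Because $0\le Y_n:=\prod_i\mathbf 1_{A_i}(T_i^n x)\le 1$, the partial sums $NS_N$ are monotone nondecreasing in $N$, so the sandwich $N_jS_{N_j}\le NS_N\le N_{j+1}S_{N_{j+1}}$ with $N_{j+1}/N_j\to 1$ upgrades the subsequential convergence to convergence along all $N$.

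Both (a) and (b) boil down to controlling multi-time intersections $\lambda\bigl(\bigcap_i T_i^{-n_i}A_i\bigr)$. After relabelling so that $h(T_1)<h(T_2)<\cdots<h(T_k)$, I would peel off the highest-entropy factor by induction on $k$. Property E for $T_k$ decomposes $T_k^{-n}A_k$ into ``good'' cylinders in $\mathcal A_k^{n+l_k}$ each of $\lambda$-measure $\asymp e^{-(n+l_k)h(T_k)}$, with a bad residue of measure $O(1/n)$; the remaining factor $E:=\bigcap_{i<k}T_i^{-n}A_i$ is, by property E for the lower-entropy systems, built from cylinders on strictly coarser scales $\asymp e^{-nh(T_i)}$ for $i<k$. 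The scale separation $e^{-(n+l_k)h(T_k)}\ll e^{-nh(T_{k-1})}$ coming from the distinct-entropy hypothesis, together with property M for $T_k$, should yield a factorization of the form
\(\lambda(T_k^{-n}A_k\cap E)\approx\mu_k(A_k)\,\lambda(E),\)
driving the induction step. An analogous computation with two time indices $n,m$ handles the correlations $\mathrm{cov}_\lambda(Y_n,Y_m)$ needed for (b).

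The main obstacle is precisely this factorization. Property M for $T_k$ mixes a rank-$l$ cylinder $A\in\sigma(\mathcal A_k^l)$ against a set of the form $T_k^{-(n+l)}B$, i.e.\ against sets lying in the \emph{$T_k$-future}; our macroscopic set $E$, however, is \emph{not} of this form. Bridging this requires a discrepancy-style argument: the union of fine $T_k$-cylinders making up $T_k^{-n}A_k$ must be shown to be so evenly spread out in $[0,1]$ that intersection with any macroscopic set coming from the coarser systems yields essentially a product, the boundary effects — those $T_k$-cylinders that straddle the boundary of some $T_j$-cylinder, $j<k$ — being controlled by the strict scale separation together with an entropy bound on the number of coarse cells. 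I expect this interplay between properties E, M, and the distinct-entropy hypothesis to be the most delicate part of the argument.
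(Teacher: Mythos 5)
Your architecture is the paper's: reduce via Theorem \ref{lem:pje_gp} and a density argument to the statement for indicators of cylinders, prove that statement by a second-moment/Borel--Cantelli argument (the paper packages this step as Lemma \ref{Lem:a.e.conv}, a lemma of Harman, but your subsequence-plus-monotonicity version is an equivalent substitute), and estimate the multi-time intersections by incorporating the transformations one at a time in order of increasing entropy. The ``main obstacle'' you single out --- that property M only mixes a set from the $T_{q+1}$-past against a set of the form $T_{q+1}^{-(n+l)}B$, whereas the macroscopic set coming from the lower-entropy systems is not of either form --- is resolved in the paper exactly as you guess: the macroscopic set (a union of at most $s(q)\le ce^{M_2(h_q+\epsilon)}$ intervals, each of length at least $\frac1c e^{-M_2(h_q+\epsilon)}$) is approximated from inside by a union of rank-$M_1$ cylinders of the finer system $T_{q+1}$, which \emph{does} lie in $\sigma(\mathcal{A}_{q+1}^{M_1})$, i.e.\ in the $T_{q+1}$-past; the straddling cells are controlled by the product $s(q)\cdot ce^{-M_1(h_{q+1}-\epsilon)}\le e^{-\gamma M_1}$, and property M is then applied with time gaps $m-M_1-l$ and $n-m-l$. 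This is precisely Claims 3 and 4 of the paper's proof.

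The one place your sketch, as literally written, would break down is the covariance summation. The scale-separation inequality that makes the refinement step work is, in essence, $M_1(h_{q+1}-\epsilon)\ge (n+l)(h_q+\epsilon)+\gamma M_1$ with the intermediate rank constrained by $M_1\le m-l$; for a pair $m<n$ this forces the ratio $n/m$ to be bounded (roughly by $h_{q+1}/h_q$). Consequently one cannot estimate $\mathrm{cov}_\lambda(Y_m,Y_n)$ for \emph{all} pairs $1\le m<n\le N$ and sum, as your step (b) proposes: the pairs with $n\gg m$ are inaccessible to this method, and there are of order $N^2$ of them. The paper's remedy is to prove the second-moment bound only over blocks $[M_1,M_2]$ with $M_2\le(1+\delta)M_1$ (so that every pair in a block has bounded ratio and a common base point $M_1$ against which the $b_{m-M_1-l}$ and $e^{-\gamma M_1}$ errors are measured), to partition $[1,N]$ into $O(\log N)$ such blocks, and to pay logarithmic factors; this yields $\int(S_N-P)^2\,d\lambda=O(N^{-1}\log^2 N)$ rather than $O(N^{-1})$, which is still summable along your subsequence $N_j$, so the remainder of your argument goes through unchanged once this blocking is inserted.
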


The following classical lemma will play an instrumental role in the proof of Theorem \ref{j.e.:pointwise}. 

\begin{Lemma}[See \cite{Har}, Lemma 1.5 and a historical remark preceding its formulation.]
\label{Lem:a.e.conv}
Let $(X, \mathcal{B}, \mu)$ be a probability space. Let $f_n(x)$ be a sequence of non-negative measurable functions on $X$. Let $u_n, v_n$ be sequences of real numbers such that \[ 0 \leq u_n \leq v_n.\] 
Write $V(N) := \sum\limits_{n=1}^N v_n$ and assume that $V(N) \rightarrow \infty$ as $N \rightarrow \infty$.
Suppose that there exists a constant $K$ such that for arbitrary large integers $M_1,M_2 \, (1 \leq M_1 < M_2)$,
\begin{equation} 
\label{eq:6}
\int_X \left(  \sum_{M_1 \leq n < M_2} (f_n(x) - u_n) \right)^2 \, d \mu \leq K \sum_{M_1 \leq n < M_2} v_n. 
\end{equation}
Then for any $\epsilon >0$ and for a.e. $x \in X$, 
\begin{equation}
\label{eq:7}
 \sum_{1 \leq n \leq N} f_n(x) = \sum_{1 \leq n \leq N} u_n + O \left(V (N)^{1/2} (\log (V(N) + 2))^{2/3 + \epsilon} + \max_{1 \leq n \leq N} u_n \right).
 \end{equation}
\end{Lemma}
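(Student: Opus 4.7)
Write $S_N(x) := \sum_{n=1}^N (f_n(x) - u_n)$ and $U_N := \sum_{n=1}^N u_n$, so the hypothesis \eqref{eq:6} simply asserts that $\|S_{M_2} - S_{M_1}\|_{L^2(\mu)}^2 \leq K(V_{M_2} - V_{M_1})$ for all $1 \leq M_1 < M_2$. The task reduces to showing that for a.e.\ $x$,
\[
S_N(x) = O\bigl(V(N)^{1/2} (\log(V(N)+2))^{2/3+\epsilon}\bigr) + O\bigl(\max_{n \leq N} u_n\bigr).
\]
The plan is the classical \emph{variance method}: first establish the bound along a geometric subsequence via Chebyshev and Borel--Cantelli, then bridge consecutive subsequence points with a Rademacher--Menshov-type maximal inequality.

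\textbf{Step 1 (subsequence bound).} Choose indices $N_k$ with $V(N_k) \in [2^k,\, 2^{k+1})$, which is possible because $V(N) \to \infty$. Applying \eqref{eq:6} with $M_1 = 0,\, M_2 = N_k$ gives $\int S_{N_k}^2\, d\mu \leq K V(N_k) \leq 2K \cdot 2^k$. Let $\psi(k) := (\log(V(N_k)+2))^{2/3+\epsilon}$, so $\psi(k)^2 \asymp k^{4/3+2\epsilon}$. By Chebyshev,
\[
\mu\bigl\{|S_{N_k}| > V(N_k)^{1/2}\psi(k)\bigr\} \leq \frac{K V(N_k)}{V(N_k)\psi(k)^2} = \frac{K}{\psi(k)^2},
\]
and $\sum_k k^{-(4/3+2\epsilon)} < \infty$. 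Borel--Cantelli yields $|S_{N_k}(x)| \leq V(N_k)^{1/2}\psi(k)$ for a.e.\ $x$ and all large $k$.

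\textbf{Step 2 (maximal inequality for gaps).} The heart of the argument, and what I expect to be the main obstacle, is to derive from \eqref{eq:6} the Rademacher--Menshov-style estimate
\begin{equation*}
\int \max_{N_k \leq N \leq N_{k+1}} (S_N - S_{N_k})^2 \, d\mu \;\leq\; C K\bigl(V(N_{k+1}) - V(N_k)\bigr)\bigl(\log(N_{k+1} - N_k + 2)\bigr)^2.
\end{equation*}
The standard proof proceeds by dyadic decomposition: split the block $(N_k, N_{k+1}]$ into dyadic sub-blocks at $\lceil \log_2(N_{k+1}-N_k)\rceil$ scales, estimate the $L^2$ norm of the partial-sum increment on each scale using \eqref{eq:6} on each sub-block, and combine via Minkowski's inequality, which produces the $(\log)^2$ factor. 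With this in hand, Chebyshev and Borel--Cantelli (applied analogously to Step 1, with the extra $(\log)$ factor absorbed in the enlargement $\psi(k) \to \psi(k)\cdot O(k)$, still summable because $2/3+\epsilon$ leaves slack) give that a.e.\
\[
\max_{N_k \leq N \leq N_{k+1}} |S_N(x) - S_{N_k}(x)| = O\bigl(V(N_k)^{1/2}\psi(k)\bigr).
\]

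\textbf{Step 3 (interpolation and role of $\max u_n$).} For arbitrary $N$, locate $k$ with $N_k \leq N \leq N_{k+1}$. Monotonicity of $\sum f_n$ (since $f_n \geq 0$) lets me sandwich $\sum_{n\leq N} f_n$ between $\sum_{n \leq N_k} f_n$ and $\sum_{n \leq N_{k+1}} f_n$, and the corresponding differences of $U_N$'s are controlled by $V(N_{k+1}) - V(N_k) = O(V(N))$ together with the worst individual jump, which is what produces the harmless additive term $\max_{n \leq N} u_n$ in \eqref{eq:7}. Combining Steps 1 and 2 with this sandwich then gives $|S_N(x)| = O(V(N)^{1/2}(\log(V(N)+2))^{2/3+\epsilon}) + O(\max_{n \leq N} u_n)$ a.e., which is the required conclusion.
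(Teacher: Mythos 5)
The paper itself gives no proof of this lemma --- it is quoted directly from Harman's book (Lemma 1.5 there) --- so there is no internal argument to compare against; what follows is an assessment of your sketch on its own terms. Your architecture (Chebyshev and Borel--Cantelli along a subsequence, plus a Rademacher--Menshov maximal inequality across the gaps) is the standard G\'al--Koksma route and is essentially how Harman proves it, but two of your steps fail as written. The decomposition is done at the wrong scale: in Step 2 the Rademacher--Menshov inequality over the index block $(N_k,N_{k+1}]$ produces a factor $(\log(N_{k+1}-N_k))^2$, and $N_{k+1}-N_k$ is \emph{not} controlled by $V(N_{k+1})-V(N_k)$ (there may be enormously many indices with tiny $v_n$), so this logarithm need not be $O(k)$. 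Likewise the sandwich in Step 3 between $N_k$ and $N_{k+1}$ is useless: the deterministic drift $\sum_{N_k<n\le N_{k+1}}u_n$ can be as large as $V(N_{k+1})-V(N_k)\asymp V(N)$, which swamps the target error term. The correct device is to set $m(j)=\max\{m:V(m)\le j\}$ for integers $j$ and run the dyadic/maximal-inequality machinery over the integers $j$, so that the number of ``terms'' in the $k$-th block really is $\asymp 2^k$; the bridging is then done over a single unit increment of $V$, where
$\sum_{m(j)<n\le m(j+1)}u_n\le u_{m(j)+1}+\bigl(V(m(j+1))-V(m(j)+1)\bigr)\le \max_n u_n+1$ ---
this is exactly where the hypotheses $f_n\ge 0$ and $u_n\le v_n$ and the additive term $\max_{n\le N}u_n$ enter. (Your $N_k$ with $V(N_k)\in[2^k,2^{k+1})$ also need not exist when individual $v_n$ are large, e.g.\ $v_n=4^n$.)

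There is also a problem with the exponent. The method you describe cannot yield $(\log V)^{2/3+\epsilon}$: once the maximal inequality contributes a factor $\log(2^k)=O(k)$ and Chebyshev--Borel--Cantelli demands an extra $k^{1/2+\epsilon}$ for summability, the threshold is forced up to $V^{1/2}k^{3/2+\epsilon}=V^{1/2}(\log V)^{3/2+\epsilon}$. Your claim that the extra logarithm is ``absorbed'' because ``$2/3+\epsilon$ leaves slack'' is incorrect --- multiplying $\psi(k)$ by $O(k)$ changes the final exponent from $2/3+\epsilon$ to $5/3+\epsilon$, not back to $2/3+\epsilon$. Harman's Lemma 1.5 in fact asserts the exponent $3/2+\varepsilon$; the $2/3$ in the paper's transcription appears to be a transposition, and any fixed power of the logarithm suffices for the application in Section 4, where one only needs the error to be $o(N)$ with $V(N)\asymp N\log N$. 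So to turn your sketch into a proof you must (i) redo Steps 2--3 over unit increments of $V$ rather than over index blocks, and (ii) either settle for the exponent $3/2+\epsilon$ (which is all the cited source provides and all the paper needs) or supply a genuinely different argument for $2/3+\epsilon$.
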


\begin{proof}[Proof of Theorem \ref{j.e.:pointwise}]
Recall that $\mathcal{A}_i^m$ is a finite or countable partition of the unit interval $[0,1]$ and the elements of $\mathcal{A}_i^m$ are intervals, which are called cylinders of rank $m$ (corresponding to $(T_i, \mu_i, \mathcal{A}_i)$).
It is enough to show that for any $l \in \mathbb{N}$ and for any $A_i \in \mathcal{A}_i^{l}$, $i = 1, 2, \dots, k$, 
\begin{equation}
\label{eq:a.e.conv} 
\lim_{N \rightarrow \infty} \frac{1}{N} \sum_{n=0}^{N-1} \prod_{i=1}^k 1_{A_i} (T_i^n x) = \prod_{i=1}^k \mu_i(A_i) \quad \text{ for almost every } x.
\end{equation}
To see that the validity for \eqref{eq:a.e.conv} implies the result in question observe first that since the family of cylinders is countably infinite, \eqref{eq:a.e.conv} implies that for almost every $x$, for any $A_i \in \mathcal{A}_i^{l}$, $i = 1, 2, \dots, k$, 
\begin{equation}
\label{eq:a.e.conv2} 
\lim_{N \rightarrow \infty} \frac{1}{N} \sum_{n=0}^{N-1} \prod_{i=1}^k 1_{A_i} (T_i^n x) = \prod_{i=1}^k \mu_i(A_i).
\end{equation}
Now, since $\mathcal{B} = \bigvee_{m=1}^{\infty} \sigma (\mathcal{A}_i^m)$ for all $i = 1, 2, \dots, k$, 
any interval $J \subset [0,1]$ can be approximated by cylinders. 
Hence, by the standard approximation argument,
 the fact that \eqref{eq:a.e.conv2} holds for any $A_i \in \mathcal{A}_i^l, i = 1, 2, \dots, k, l \in \mathbb{N}$ implies that it holds for any intervals $J_1, \dots, J_k$:
\begin{equation*}
\lim_{N \rightarrow \infty} \frac{1}{N} \sum_{n=0}^{N-1} \prod_{i=1}^k 1_{J_i} (T_i^n x) = \prod_{i=1}^k \mu_i(J_i). 
\end{equation*}
This, in turn, implies that for almost every $x$, $(T_1^n x, \dots, T_k^n x) $ is equidistributed with respect to $\mu_1 \times \cdots \times \mu_k$. The pointwise joint ergodicity of $T_1, \dots, T_k$ follows then from Theorem \ref{lem:pje_gp}.

We will derive \eqref{eq:a.e.conv} by applying Lemma \ref{Lem:a.e.conv} to 
\[ f_n(x) = \prod_{i=1}^k 1_{A_i} (T_i^n x), \quad u_n = \prod_{i=1}^k \mu_i(A_i), \quad v_n =  \max (1,  \log n). \]
(Note that $f_n(x)$ and $v_n$ depend on $n$, but $u_n$ is a constant sequence, so we can ignore the last term $\max\limits_{1 \leq n \leq N} u_n$ in \eqref{eq:7}.)

Write $h_i = h(T_i)$ for simplicity. Without loss of generality we assume that $h_1 < h_2 < \cdots < h_k$. Choose  $\epsilon, \delta, \gamma > 0$ small enough so that for $i = 2, \dots, k$, 
\begin{equation}
\label{gamma}
h_i - (1+\delta)h_{i-1}  - (2+ \delta) \epsilon \geq \gamma.
\end{equation}

Since $(T_i, \mu_i, \mathcal{A}_i) \in \mathcal{T}$ for $i=1, 2, \dots, k$, there exists $c \geq 1$ such that $\frac{1}{c} \mu_i \leq \lambda \leq c \mu_i$ for all $i = 1, 2, \dots, k$.
For $M_1, M_2 \in \mathbb{N}$ with $M_1 < M_2 \leq (1+\delta)M_1$, let 
$$I_{M_1, M_2} = \{(m,n): M_1+l \leq m < n \leq M_2 -l, n- m \geq l \} \subset \mathbb{N}^2.$$

We will show that  \eqref{eq:a.e.conv} is the consequence of the following two claims:
 
Claim 1. For $(m,n) \in I_{M_1, M_2}$,
\begin{equation}
\label{estimate:pje1}
\lambda \left( \bigcap_{i=1}^k \left(T_i^{-m} A_i \bigcap T_i^{-n} A_i \right) \right) = \prod_{i=1}^k \mu_i(A_i)^2 + c_{m,n},
\end{equation}
where $\sum\limits_{(m,n) \in I_{M_1, M_2}} |c_{m,n}| = O(M_2 - M_1)$

Claim 2. For $M_1+l \leq m \leq M_2 -l$,
\begin{equation}
\label{estimate:pje2}
\lambda \left(\bigcap_{i=1}^k T_i^{-m} A_i \right) = \prod_{i=1}^k \mu_i(A_i) + d_{m},
\end{equation}
where $\sum\limits_{m = M_1 +l}^{M_2 - l} |d_{m}| = O(1)$. 

Before proving the above claims, let us show that \eqref{eq:a.e.conv} follows from \eqref{estimate:pje1} and \eqref{estimate:pje2}.
First, notice that \eqref{estimate:pje1} and \eqref{estimate:pje2} imply that
\begin{align*} 
&\int_{0}^1 \left( \sum_{n=M_1}^{M_2} \left( \prod_{i=1}^k 1_{A_i} (T_i^{n} x) - \prod_{i=1}^k \mu_i(A_i)\right)  \right)^2 \, d \lambda \\
&= 2 \int_0^1 \sum_{(m,n) \in I_{M_1, M_2}} \left( \prod_{i=1}^k 1_{A_i} (T_i^{m} x) - \prod_{i=1}^k \mu_i(A_i)\right) \left( \prod_{i=1}^k 1_{A_i} (T_i^{n} x) - \prod_{i=1}^k \mu_i(A_i)\right) \, d \lambda   + O (M_2 -M_1) \\
&= O \left( \sum_{(m,n) \in I_{M_1, M_2}} |c_{m,n}| \right)  + O \left( \sum_{m = M_1+l}^{M_2-l} |d_m| \right) (M_2 - M_1) + O(M_2 - M_1) \\
& = O (M_2 -M_1).
\end{align*}

Given any $M < N$, partition the interval $[M, N]$ into $O(\log (N-M))$ intervals of the form $[M_1, M_2]$ with $M_2 \leq (1+\delta)M_1$. Then we have 
\begin{equation}
\label{goal:thm-pointwise}
 \int_{0}^1 \left( \sum_{n=M+1}^N  \left(  \prod_{i=1}^k 1_{A_i} (T_i^{n} x) - \prod_{i=1}^k \mu_i(A_i) \right) \right)^2 \, d \lambda = O ((N -M) \log (N-M)). 
 \end{equation}
Note also that 
\begin{align*}
    (N-M) \log (N-M) &\leq (N-M)\log N = N \log N - M \log N \\ 
    &\leq N \log N - M \log M = O \left(\sum\limits_{n=M}^N \log n \right) 
    = O \left(\sum\limits_{n=M}^N v_n \right).
\end{align*}
 So Lemma \ref{Lem:a.e.conv} implies
 \begin{equation*}
 \label{eq4.9}
 \sum_{1 \leq n \leq N} \prod_{i=1}^k 1_{A_i} (T_i^n x) = \sum_{1 \leq n \leq N} \prod_{i=1}^k \mu_i(A_i) + O \left( (N \log N)^{1/2} (\log (N \log N + 2))^{2/3 + \epsilon}  \right),
 \end{equation*}
 which, in turn, gives \eqref{eq:a.e.conv}.  

Now let us prove Claim 1 (the proof of Claim 2 is analogous and is omitted). 
We start with two preliminary observations. In what follows, we tacitly assume that $\epsilon, \delta, \gamma$ satisfy \eqref{gamma}.
\begin{enumerate}
\item By Corollary \ref{entropy:intervals}, for any $i=1, 2, \dots, k$ and for all $n \in \mathbb{N}$, we have a partition
\[ \mathcal{A}_i^n   = \mathfrak{g}_i^n  \cup \mathfrak{b}_i^{n} \]
such that 
\begin{enumerate}
\item $\mathfrak{g}_i^n = \{ I \in \mathcal{A}_i^n : \frac{1}{c} e^{- n (h_i + \epsilon)} \leq \lambda (I) \leq c e^{-n (h_i - \epsilon)} \}$
\item $\mathfrak{b}_i^n = \mathcal{A}_i^n  \setminus \mathfrak{g}_i^n$.
\end{enumerate}
Since $(T_i, \mu_i, \mathcal{A}_i)$ has property E, there is a real number $a = a(\epsilon) > 0$ so that for $i = 1, 2, \dots, k$,
\begin{equation}
\label{constant-a}
\lambda \left( \bigcup_{B \in \mathfrak{b}_i^n} B \right) \leq \frac{a}{n}.
\end{equation} 
\item Since $(T_i, \mu_i, \mathcal{A}_i)$ has property M, there exists a sequence $(b_n)_{n=1}^{\infty}$ of non-negative real numbers  with $\sum\limits_{n=1}^{\infty} b_n < \infty$, such that for $i = 1, 2, \dots, k$,
 if $A \in \sigma (\mathcal{A}_i^l)$ and $B \in \mathcal{B}$,
\[ | \lambda (A \cap T_i^{-(n+l)} B)  - \lambda(A) \mu_i(B)| \leq b_n \text{ for any } n \in \mathbb{N}.\]
\end{enumerate}

To show \eqref{estimate:pje1}, we will need the following result: 

Claim 3: For $(m,n) \in I_{M_1, M_2}$, there exist sets $I_1 (k)$ and $I_2(k)$ such that 
\[  \bigcap_{i=1}^k (T_i^{-m} A_i \cap T_i^{-n} A_i) =  I_1(k) \cup I_2(k) \]
where
\begin{enumerate}
\item $I_1(k)$ is union of some cylinders in $\mathfrak{g}_k^{n+l}$ and satisfies 
$$ \lambda (I_1(k))  = \prod\limits_{i=1}^k \mu_i (A_i)^2 + O \left( b_{m-M_1-l} + b_{n-m-l}  + \frac{1}{M_1} + e^{- \gamma M_1} \right),$$
\item $I_2(k)$ is a set, which is disjoint from $I_1(k)$, and satisfies $\lambda (I_2(k)) = O \left( \frac{1}{M_1} +  e^{- \gamma M_1} \right)$.
\end{enumerate}

Note that if Claim 3 holds, then we have  
\[|c_{m,n}| = O \left(   b_{m-M_1-l} + b_{n-m-l}  + \frac{1}{M_1} +  e^{- \gamma M_1}  \right)\] 
and so \[ \sum_{(m,n) \in I_{M_1, M_2}} |c_{m,n}|= O(M_2 - M_1),\]
which implies Claim 1.
We prove Claim 3 by induction. 
Let us first show that there exist sets $I_1 (1)$ and $I_2(1)$ such that 
\[ T_1^{-m} A_1 \cap T_1^{-n} A_1 =  I_1(1) \cup I_2 (1),\]
where 
\begin{itemize}
\item $I_1(1)$ is union of some intervals in $ \mathfrak{g}_1^{n+l}$ and satisfies
$$\lambda(I_1(1)) = \mu_1(A_1)^2 + O \left(b_{m-M_1-l} + b_{n-m-l}  + \frac{1}{M_1} \right)$$
\item $I_2(1)$ is  a set, which is disjoint from $I_1(1)$, and satisfies $\lambda(I_2) =  O \left(\frac{1}{M_1} \right)$.
\end{itemize}

Indeed, note that $T_1^{-m} A_1 \cap T_1^{-n} A_1 \in \sigma(\mathcal{A}_1^{n+l})$, so
\begin{itemize}
\item $I_1(1)$ is union of some intervals in $\mathfrak{g}_1^{n+l}$, more precisely, $I_1(1) = \bigcup_{i=1}^{s(1)} J_i(1)$ such that $J_i(1) \in  \mathfrak{g}_1^{n+l}$ for $1 \leq i \leq s(1)$, 
\item $I_2(1)$ is union of some intervals in $\mathfrak{b}_1^{n+l}$, so $\lambda (I_2 (1)) \leq \frac{a}{n+l} = O \left(\frac{1}{M_1} \right)$. 
\end{itemize}
Moreover, by property M of $(T_1, \mu_1, \mathcal{A}_1)$, we have that
\begin{align*}
\lambda (I_1(1)) &= \lambda (T_1^{-m} A_1 \cap T_1^{-n} A_1) + O \left( \lambda (I_2(1)) \right) \\
&= \lambda (T_1^{-m}A_1 \cap T_1^{- ((n-m-l) +(m+l))} A_1) + O \left( \frac{1}{M_1} \right)\\
&= \mu_1(T_1^{-m}A_1) \cdot \mu_1(A_1) + O \left( b_{n-m-l} + \frac{1}{M_1} \right) \quad \quad (\text{by property M}) \\
&= \mu_1 (X \cap T_1^{-(m -M_1 -l) + (M_1 +l) } A_1) \cdot \mu_1(A_1) + O \left( b_{n-m-l} + \frac{1}{M_1} \right) \\
&= \mu_1(A_1)^2 + O  \left( b_{m-M_1-l} + b_{n-m-l} + \frac{1}{M_1} \right) \quad \quad  (\text{by property M})
\end{align*}

To continue the induction, suppose that for $1 \leq q < k$, there are sets $I_1(q)$ and $I_2(q)$ such that
\[  \bigcap_{i=1}^q (T_i^{-m} A_i \cap T_i^{-n} A_i) =  I_1(q) \cup I_2(q) \]
where for some $s(q)$
\begin{itemize}
\item $I_1(q) = \bigcup\limits_{i=1}^{s(q)} J_i(q)$, where $J_i(q) \in \mathfrak{g}_q^{n+l}$ for $1 \leq i \leq s(q)$, and 
$$ \lambda (I_1(q))  = \prod\limits_{i=1}^q \mu_i (A_i)^2 + O \left( b_{m-M_1-l} + b_{n-m-l}  + \frac{1}{M_1} + e^{- \gamma M_1} \right),$$
\item $I_2(q)$ is a set, which is disjoint from $I_1(q)$, and satisfies $\lambda (I_2(q)) = O \left( \frac{1}{M_1} +  e^{- \gamma M_1} \right)$.
\end{itemize}
We have to show that there are sets $I_1(q+1)$ and $I_2(q+1)$ such that 
\[  \bigcap_{i=1}^{q+1} (T_i^{-m} A_i \cap T_i^{-n} A_i) =  I_1(q+1) \cup I_2(q+1) \]
where
\begin{itemize}
\item $I_1(q+1) = \bigcup\limits_{i=1}^{s(q+1)} J_i(q+1)$, where $J_i(q+1) \in \mathfrak{g}_{q+1}^{n+l}$ for $1 \leq i \leq s(q+1)$, and
$$\lambda (I_1(q+1)) = \prod\limits_{i=1}^{q+1} \mu_i (A_i)^2 + O \left( b_{m-M_1-l} + b_{n-m-l}  + \frac{1}{M_1} +  e^{- \gamma M_1} \right),$$
\item $I_2(q+1)$ is a set, which is disjoint from $I_1(q+1)$, and satisfies $\lambda (I_2(q+1)) = O \left( \frac{1}{M_1} +  e^{- \gamma M_1} \right)$,
\end{itemize}

To conclude the proof, we need first to establish the following statement. 

Claim 4.
Given $I_1 (q) = \bigcup\limits_{i=1}^{s(q)} J_i(q)$ as above, one can write
\[ I_1(q) = H_1(q) \cup H_2(q),\]
where  
\begin{itemize}
\item $H_1(q) $ is union of some cylinders in $ \mathfrak{g}_{q+1}^{M_1}$,
\item $\lambda (H_2(q)) = O (\frac{1}{M_1} + e^{-\gamma M_1})$.
\end{itemize}

{\em Proof of  Claim 4.} 
Notice that  $J_i(q) \in \mathfrak{g}_q^{n+l}$, so  
\begin{equation*}
\lambda (J_i(q)) \geq \frac{1}{c} \mu(J_i(q)) \geq \frac{1}{c} e^{-(n+l) (h_q + \epsilon)} \geq \frac{1}{c} e^{- M_2 (h_q + \epsilon)}.
\end{equation*}
Since $\sum\limits_{i=1}^{s(q)} \lambda (J_i(q))  = \lambda \left( \bigcup\limits_{i=1}^{s(q)} (J_i(q)) \right) \leq 1$, we have $s(q) \frac{1}{c} e^{- M_2 (h_q + \epsilon)} \leq 1$, which implies
\begin{equation}
\label{estimate:$s(q)$:sec4} 
s(q) \leq c e^{M_2 (h_q + \epsilon)}.
\end{equation}

Assume that $\mathfrak{g}_{q+1}^{M_1}$ is comprised of intervals $U_1, \dots, U_s$, where  $U_i$ are subintervals of the interval $[0,1]$ having disjoint interiors. If $\mathfrak{b}_{q+1}^{M_1} = \emptyset$, then $U_1, \dots, U_s$ form a partition of the unit interval $[0,1]$.
Otherwise, there exist intervals $V_1, \dots, V_t$ such that $U_1, \dots, U_s, V_1, \dots V_t$ form a partition $\mathcal{P}$ of $[0,1]$. 
For each interval $J_i(q)$,  there exist a finite family of intervals $I_{i,j}$ $(1 \leq j \leq t_i)$ and two intervals $K_{i,1}$ and $K_{i,2}$ in $\mathcal{P}$ such that 
\[ \bigcup_{j=1}^{t_i} I_{i,j} \subset \overline{J_i(q)} \subset \left( \bigcup_{j=1}^{t_i} \overline{I_{i,j}} \right) \bigcup \left( \overline{ K_{i,1}} \cup \overline{K_{i,2}} \right). \]

Let $H_1(q)$ be the union of intervals $I_{i,j}, 1 \leq i \leq s(q), 1 \leq j \leq t_i$ which belong to $\mathfrak{g}_{q+1}^{M_1}$. Let $H_2(q) = I_1(q) \setminus H_1(q)$. 
Let us show that 
$$\lambda (H_2(q)) = O \left( \frac{1}{M_1} + e^{-\gamma M_1} \right).$$
Note that $H_2(q)$ is covered by at most $2s(q)$  intervals from $\mathfrak{g}_{q+1}^{M_1}$ and - possibly - by  some intervals from $ \mathfrak{b}_{q+1}^{M_1}$. 
Since $\lambda (J) \leq c e^{-M_1 (h_{q+1} - \epsilon)}$ for any cylinder $J \in \mathfrak{g}_{q+1}^{M_1}$,
\begin{equation}
\label{eq:estH}
\lambda(H_2(q)) \leq \lambda \left(\bigcup_{B \in \mathfrak{b}_{q+1}^{M_1}} B \right) + 2 s(q) c e^{-M_1 (h_{q+1} - \epsilon)}.
\end{equation}
By \eqref{gamma},
\begin{equation*}
-M_1 (h_{q+1} - \epsilon) +M_2 (h_q + \epsilon) \leq -M_1 (h_{q+1} - (1+ \delta)h_q - (2+ \delta) \epsilon) \leq - \gamma M_1,
\end{equation*}
and so by \eqref{estimate:$s(q)$:sec4}
\begin{equation}
\label{eq4.13}
2 s(q) c e^{-M_1 (h_{q+1} - \epsilon)} \leq 2 c^2 e^{- M_1 (h_{q+1} - \epsilon) + M_2 (h_q + \epsilon)}  \leq e^{- \gamma M_1}
\end{equation}
By property E, 
\begin{equation}
\label{eq4.12} 
\lambda \left( \bigcup_{B \in  \mathfrak{b}_{q+1}^{M_1}} B \right) \leq \frac{a}{M_1}.
\end{equation}
Putting \eqref{eq:estH}, \eqref{eq4.13}, and \eqref{eq4.12} together, we get 
$$
\lambda(H_2(q)) \leq  \frac{a}{M_1} + 2 c^2 e^{- M_1 (h_{q+1} - \epsilon) + M_2 (h_q + \epsilon)} = O \left( \frac{1}{M_1} + e^{-\gamma M_1} \right).  
$$
This completes the proof of Claim 4.

By Claim 4,  
\[\bigcap_{i=1}^q (T_i^{-m} A_i \cap T_i^{-n} A_i) = H_1(q) \cup H_2(q) \cup I_2(q), \]
where $H_1(q) \in  \sigma (\mathcal{A}_{q+1}^{M_1})$ and $\lambda (H_2(q) \cup I_2(q)) = O(\frac{1}{M_1} + e^{- \gamma M_1})$.

Note that $H_1(q) \cap T_{q+1}^{-m} A_{q+1}  \cap T_{q+1}^{-n} A_{q+1}  \in \sigma(\mathcal{A}_{q+1}^{n+l})$, thus one can write 
\begin{equation}
\label{eqn:$I_1(q+1)$}
H_1(q) \cap T_{q+1}^{-m} A_{q+1} \cap T_{q+1}^{-n} A_{q+1} = I_1(q+1) \cup R,
\end{equation}
where $I_1(q+1) = \bigcup_{i=1}^{s(q+1)}J_i(q+1)$ with $J_i(q+1) \in \mathfrak{g}_{q+1}^{n+l}$ for $1 \leq i \leq s(q+1)$ and $R$ is union of some intervals in $\mathfrak{b}_{q+1}^{n+l}$, so 
\begin{equation}
\label{eqn:R} 
\lambda (R) \leq  \frac{a}{n+l}  = O \left( \frac{1}{M_1} \right).
\end{equation}
Let $$I_2(q+1) = \bigcap_{i=1}^{q+1} (T_i^{-m} A_i \cap T_i^{-n} A_i)  \setminus I_1(q+1),$$ 
so $I_1 (q+1) \bigcup I_2(q+1) = \bigcap\limits_{i=1}^{q+1} (T_i^{-m} A_i \cap T_i^{-n} A_i).$ 
Note that
\[ I_2(q+1) \subset R \cup H_2(q) \cup I_2(q),\]
so $\lambda (I_2(q+1)) = O\left ( \frac{1}{M_1} + e^{-\gamma M_1} \right)$.

Now, utilizing the fact that $(T_{q+1}, \mu_{q+1}, \mathcal{A}_{q+1})$ has property M, we obtain 
\begin{align} 
\label{eqn:comp}
\lambda \left( H_1(q) \cap T_{q+1}^{-m} A_{q+1} \cap T_{q+1}^{-n} A_{q+1} \right) 
&= \lambda \left(H_1(q) \cap T_{q+1}^{-m} A_{q+1} \right) \mu_{q+1} (A_{q+1}) + O(b_{n-m-l})  \notag \\
&= \lambda(H_1(q)) \mu_{q+1} (A_{q+1})^2 + O(b_{m-M_1-l} + b_{n-m-l}).
\end{align}
(We use that $H_1(q) \cap T_{q+1}^{-m} A_{q+1} \in \mathcal{A}_{q+1}^{m+l}$ for the first equality and  $H_1(q) \in \mathcal{A}_{q+1}^{M_1} \subset \mathcal{A}_{q+1}^{M_1 + l}$ for the second equality.)

Then,
\begin{align*} 
\lambda(I_1(q+1)) &= \lambda \left(H_1(q) \cap T_{q+1}^{-m} A_{q+1} \cap T_{q+1}^{-n} A_{q+1} \right) 
+ O (\lambda(R))  \quad (\text{by } \eqref{eqn:$I_1(q+1)$}) \\
&= \lambda (H_1(q)) \mu_{q+1}^2(A_{q+1})^2 + O \left(b_{m-M_1-l} + b_{n-m-l} + \frac{1}{M_1}  \right) (\text{by } \eqref{eqn:R}, \eqref{eqn:comp})\\
&= \lambda (I_1(q)) \mu_{q+1}^2(A_{q+1})^2 + O \left(b_{m - M_1 -l} + b_{n-m-l} + \frac{1}{M_1} + e^{-\gamma M_1} \right) (\text{by Claim 4})
\end{align*}
\end{proof} 

The following result is a special case of Theorem \ref{j.e.:pointwise}, which was formulated in the Introduction.
\begin{Theorem}[Theorem \ref{Thm1.2:pje_TimesbGaussBeta}]
\label{Cor:pointwise}
Let $s, t \in \mathbb{N}$.
Let $b_1, \dots, b_s$ be distinct integers with $b_i \geq 2$ for all $i = 1, 2, \dots, s$ and let $\beta_1, \dots, \beta_t$ be distinct non-integers with $\beta_i > 1$ for all $i =1, 2, \dots, t$.
Then
\begin{enumerate}
\item $T_{b_1}, \dots, T_{b_s}, T_{\beta_1}, \dots T_{\beta_t}$ are pointwise jointly ergodic.
\item $T_G, T_{b_1}, \dots, T_{b_s}, T_{\beta_1}, \dots T_{\beta_t}$ are pointwise jointly ergodic if
$\log \beta_i \ne \frac{\pi^2}{6 \log 2}$ for $1 \leq i \leq t$. 
\end{enumerate} 
\end{Theorem}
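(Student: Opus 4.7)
The strategy is to deduce this as a direct corollary of Theorem \ref{j.e.:pointwise}, so the task reduces to checking three ingredients for the list of transformations: membership in $\mathcal{T}$, properties E and M, and distinctness of entropies.

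First I would gather the facts assembled in Subsection \ref{sec:appendix}. For each integer $b_i \geq 2$, the triple $(T_{b_i}, \lambda, \mathcal{A}_{b_i})$ lies in $\mathcal{T}$ and has property E trivially (every cylinder of rank $n$ has measure exactly $b_i^{-n}$, so there are no ``bad'' atoms) and property M trivially (rank-$l$ cylinders are independent of $T_{b_i}^{-(n+l)}B$ under $\lambda$). For each non-integer $\beta_j > 1$, the triple $(T_{\beta_j}, \mu_{\beta_j}, \mathcal{A}_{\beta_j})$ is in $\mathcal{T}$; property E follows from Theorem \ref{proof_propertyA_beta}, while property M follows because $T_{\beta_j}$ is a weakly mixing RU map, so Theorem \ref{RUmap:propertyM} applies. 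For the Gauss system $(T_G, \mu_G, \mathcal{A}_G) \in \mathcal{T}$, property E is a consequence of the L\'evy-type large deviation estimate \eqref{eq:10} for $\log q_n(x)$, and property M is the classical Khintchine-Kuzmin-L\'evy mixing estimate recorded in Theorem \ref{Khinchine}.

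The second, and only substantive, point is the distinctness of entropies. One has $h(T_{b_i}) = \log b_i$, $h(T_{\beta_j}) = \log \beta_j$, and $h(T_G) = \frac{\pi^2}{6 \log 2}$. Since the $b_i$ are distinct integers and the $\beta_j$ are distinct non-integers, the values $\log b_1, \dots, \log b_s$ are pairwise distinct, the values $\log \beta_1, \dots, \log \beta_t$ are pairwise distinct, and no $\log b_i$ equals any $\log \beta_j$ (as the former would force $\beta_j = b_i \in \mathbb{N}$, contradicting the hypothesis on $\beta_j$). This establishes distinctness for part (1). For part (2) one additionally needs $\frac{\pi^2}{6 \log 2}$ to differ from each $\log b_i$ and each $\log \beta_j$; the second is the standing assumption of (2), and the first holds because $e^{\pi^2/(6\log 2)}$ is not an integer (numerically it is approximately $10.73$, and in any case it is easy to rule out each integer $b \geq 2$ by a direct inequality).

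Once these two ingredients are verified, Theorem \ref{j.e.:pointwise} immediately yields the pointwise joint ergodicity of the corresponding families with respect to $(\lambda, \dots, \lambda, \mu_{\beta_1}, \dots, \mu_{\beta_t})$ in part (1) and $(\mu_G, \lambda, \dots, \lambda, \mu_{\beta_1}, \dots, \mu_{\beta_t})$ in part (2). I do not expect a serious obstacle: the conceptual work lies entirely in the preparatory section, and the only thing one must be mildly careful about is keeping track of which measure is invariant for which transformation, and confirming the last numerical non-coincidence $\log b \neq \pi^2/(6 \log 2)$.
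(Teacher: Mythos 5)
Your proposal is correct and follows exactly the route the paper intends: Theorem \ref{Cor:pointwise} is stated there as an immediate special case of Theorem \ref{j.e.:pointwise}, with membership in $\mathcal{T}$ and properties E and M supplied by Subsection \ref{sec:appendix} and the entropy values $\log b_i$, $\log \beta_j$, $\frac{\pi^2}{6\log 2}$ checked to be distinct. Your explicit verification of the non-coincidence $\log b \ne \frac{\pi^2}{6\log 2}$ is a detail the paper leaves implicit, and it is handled correctly.
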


\begin{Corollary}[Corollary \ref{cor:normality:basic}]
\label{Corollary4.4}
Let $s, t \in \mathbb{N}$. Let $b_1, \dots, b_s$ be distinct integers with $b_i \geq 2$ for all $i = 1, 2, \dots, s$ and let $\beta_1, \dots, \beta_t$ be distinct non-integers with $\beta_j > 1$ for all $j =1, 2, \dots, t$.
Then almost every $x \in [0,1)$ is jointly $(b_1, \dots, b_s, \beta_1, \dots, \beta_t)$-normal and also, if $\log \beta_j \ne \frac{\pi^2}{6 \log 2}$ for any $j= 1, 2 \dots, t$, almost every $x \in [0,1)$ is jointly $(b_1, \dots, b_s, \beta_1, \dots, \beta_t, CF)$-normal. 
\end{Corollary}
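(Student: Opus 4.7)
The plan is to deduce the corollary directly from Theorem~\ref{Cor:pointwise} (the pointwise joint ergodicity statement) by specializing the bounded functions $f_i, g_j$ (and $f_0$ in the continued fraction case) to indicator functions of cylinder sets, and then handling the quantifier ``for all finite strings'' via a countable intersection of full-measure sets.

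First, I would record the following elementary translation between orbits and digits, which is what makes the ergodic statement equivalent to a statement about expansions. For the $b_i$-expansion, the shift $T_{b_i}$ acts as a one-sided Bernoulli shift on digits, so for any finite string $V_i = (v_1^{(i)}, \ldots, v_{k_i}^{(i)})$ one has
\begin{equation*}
1_{C_{b_i}(V_i)}(T_{b_i}^{n} x) = 1 \iff a_{n+1}^{(i)} = v_1^{(i)},\, \ldots,\, a_{n+k_i}^{(i)} = v_{k_i}^{(i)}.
\end{equation*}
The analogous statements hold for the $\beta_j$-expansion with the map $T_{\beta_j}$ and for the continued fraction expansion with the Gauss map $T_G$; this is built into the very definition of each $f$-representation via the relevant partition. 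A small re-indexing (our $F_N^1, F_N^2$ count indices $1 \le n \le N$ with the block starting at position $n$, while the ergodic sum runs $0 \le n \le N-1$) is harmless in the limit.

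Next, fix a single choice of strings $V_1, \ldots, V_s, W_1, \ldots, W_t$ (and $U$ in the CF case). Apply part~(1) of Theorem~\ref{Cor:pointwise} with $f_i = 1_{C_{b_i}(V_i)}$ and $g_j = 1_{C_{\beta_j}(W_j)}$ (respectively part~(2) with the additional $f_0 = 1_{C_G(U)}$, using the hypothesis $\log\beta_j \neq \pi^2/(6\log 2)$). Combined with the translation above, this yields a full-measure set $X(V_1, \ldots, W_t)$ (resp.\ $X(V_1, \ldots, W_t, U)$) on which
\begin{equation*}
\frac{F_N^1(x, V_1, \ldots, W_t)}{N} \longrightarrow \prod_{i=1}^{s} \lambda(C_{b_i}(V_i)) \prod_{j=1}^{t} \mu_{\beta_j}(C_{\beta_j}(W_j)),
\end{equation*}
and analogously for $F_N^2/N$ with the extra factor $\mu_G(C_G(U))$.

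Finally, the collection of admissible tuples of strings is countable: each digit alphabet $\{0,\ldots,b_i-1\}$, $\{0,\ldots,[\beta_j]\}$, and $\mathbb{N}$ is countable, and finite tuples of finite strings over countable alphabets form a countable set. Therefore the intersection
\begin{equation*}
X^* := \bigcap_{(V_1,\ldots,W_t)} X(V_1,\ldots,W_t)
\end{equation*}
(resp.\ the analogous intersection over $(V_1,\ldots,W_t,U)$ in the CF case) still has full Lebesgue measure, and by construction every $x \in X^*$ is jointly $(b_1,\ldots,b_s,\beta_1,\ldots,\beta_t)$-normal (resp.\ jointly $(\ldots,CF)$-normal). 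There is no real obstacle here beyond bookkeeping; the entire content of the corollary is already packaged in Theorem~\ref{Cor:pointwise}, and the only thing to verify carefully is that the symbolic event in the definition of $F_N^1, F_N^2$ genuinely coincides with the product of cylinder indicators evaluated along the orbits of $T_{b_i}$, $T_{\beta_j}$, $T_G$, which is immediate from the definition of the $\beta$-expansion $d_n = [\beta T_\beta^{n-1} x]$ and the corresponding formulas for $b$-expansions and continued fractions.
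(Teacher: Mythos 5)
Your proposal is correct and follows essentially the same route as the paper: the authors likewise obtain the corollary by applying the pointwise joint ergodicity theorem to the indicator functions $1_{C_{b_i}(V_i)}$, $1_{C_{\beta_j}(W_j)}$ (and $1_{C_G(U)}$ in the CF case) and then intersecting the countably many resulting full-measure sets, exactly as sketched in the Introduction for the special case of Theorem \ref{thm1-intro}. Nothing is missing; the digit-to-orbit translation and the countability bookkeeping are the whole content of the deduction.
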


\section{Disjoint systems and pointwise joint ergodicity}
\label{sec:disjoint}  

In this section, we deal with an approach to pointwise joint ergodicity which is based on the notion of disjointness of dynamical systems that was introduced by Furstenberg in \cite{Fur}. We remark that while in the previous sections we were mostly dealing with the maps of the interval, the main results in this section are quite general. 
The reader is referred to the classical literature (\cite{Furbook}, \cite{Pe}, \cite{Rud}, \cite{Walt}) for definitions of mixing notions which are mentioned in this section. 

\begin{Definition}
Probability measure preserving systems $(X_i, \mathcal{B}_i, \mu_i, T_i), 1 \leq i \leq k,$ are {\em mutually disjoint} if the only $T_1 \times \cdots \times T_k$-invariant measure on $X_1 \times \cdots \times X_k$, which has the property that the projection of $\mu$ onto $X_i$ is $\mu_i$ for all $i = 1, \dots, k$, is the product measure $\mu_1 \times \cdots \times \mu_k$.
\end{Definition}
By slight abuse of language, we will say that $T_1, \dots, T_k$
are mutually disjoint if it is clear from the context what the systems $(X_i,\mathcal{B}_i, \mu_i)$ are. 

Here are some examples of disjoint measure preserving systems:
\begin{enumerate}[(1)]
    \item Translations on compact abelian groups and weakly mixing systems are disjoint (Theorem 6.10 in \cite{Rud}).
    \item Zero entropy systems and K-automorphisms are disjoint (Theorem 6.11 in \cite{Rud}).
    \item Rigid systems and mildly mixing systems are disjoint (Proposition 4.3 in \cite{KaTh}).
     \item If $T_1$ is a translation on a compact abelian group, $T_2$ is a weakly mixing rigid transformation, and $T_3$ is a mildly mixing transformation, then it is not hard to show that $T_1, T_2, T_3$ are mutually disjoint.  
     \item If $ \mathsf{X}_i = (X, \mathcal{B}, \mu_i, T_i), 1 \leq i \leq k,$ are ergodic, measurably distal systems and Kronecker factors of $\mathsf{X}_i$ are mutually disjoint, then $T_1, \dots, T_k$ are mutually disjoint (Proposition 8.5 in \cite{BeSo}). 
\end{enumerate}

The following simple theorem establishes a relation between disjointness and pointwise joint ergodicity.
\begin{Theorem}
\label{thm:disjoint:extension}
Let $X$ be a compact metric space and let $\mathcal{B}$ be the Borel $\sigma$-algebra of $X$. 
Let $\mu_1, \dots, \mu_l, \nu_1, \dots, \nu_m$ be  pairwise equivalent probability measures on $(X, \mathcal{B})$. 
Let $\mathsf{X}_i^{(1)} = (X, \mathcal{B}, \mu_i, S_i)$, $1 \leq i \leq l$, and $\mathsf{X}_j^{(2)} = (X, \mathcal{B}, \nu_j, T_j)$, $1 \leq j \leq m$, be measure preserving systems such that 
\begin{enumerate}[(1)]
\item $S_1, \dots, S_l$ are pointwise jointly ergodic with respect to $(\mu_1, \dots, \mu_l)$, 
\item $T_1, \dots, T_m$ are pointwise jointly ergodic with respect to $(\nu_1, \dots, \nu_m)$, 
\item $\mathsf{X}^{(1)} = \mathsf{X}_1^{(1)} \times \cdots \times \mathsf{X}_l^{(1)}$ and $\mathsf{X}^{(2)} = \mathsf{X}_1^{(2)} \times \cdots \times \mathsf{X}_m^{(2)}$ are disjoint.
\end{enumerate}
Then, $S_1, \dots, S_l, T_1, \dots, T_m$ are pointwise jointly ergodic with respect to $(\mu_1, \dots, \mu_l, \nu_1, \dots, \nu_m)$. 
\end{Theorem}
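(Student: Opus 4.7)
The plan is to reduce the claim to the equidistribution characterization of pointwise joint ergodicity (Theorem \ref{lem:pje_gp}) and then use disjointness in the only place it naturally fits, namely to identify a $R$-invariant measure from its marginals.

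Applying the direction $(i) \Rightarrow (ii)$ of Theorem \ref{lem:pje_gp} to hypotheses (1) and (2), I obtain a set $X_0 \subset X$ of full measure (with respect to each of the pairwise equivalent measures) such that for every $x \in X_0$,
\[ \frac{1}{N} \sum_{n=0}^{N-1} \delta_{S_1^n x} \times \cdots \times \delta_{S_l^n x} \longrightarrow \mu_1 \times \cdots \times \mu_l \qquad \text{in weak-}{*}\text{ topology on } X^l, \]
and
\[ \frac{1}{N} \sum_{n=0}^{N-1} \delta_{T_1^n x} \times \cdots \times \delta_{T_m^n x} \longrightarrow \nu_1 \times \cdots \times \nu_m \qquad \text{in weak-}{*}\text{ topology on } X^m. \]

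Fix $x \in X_0$ and set $R = S_1 \times \cdots \times S_l \times T_1 \times \cdots \times T_m$ acting on $X^{l+m}$. Consider the joint empirical measures
\[ \sigma_N = \frac{1}{N} \sum_{n=0}^{N-1} \delta_{(S_1^n x, \, \dots,\, S_l^n x,\, T_1^n x,\, \dots,\, T_m^n x)}. \]
Since $X^{l+m}$ is a compact metric space, the sequence $(\sigma_N)$ is tight. Let $\sigma$ be any weak-$*$ accumulation point of $(\sigma_N)$. The standard Krylov--Bogolyubov identity
\[ \sigma_N - R_{*} \sigma_N = \frac{1}{N} \bigl( \delta_{(x,\dots,x)} - \delta_{R^{N}(x,\dots,x)} \bigr), \]
which has total variation at most $2/N$, shows that $\sigma$ is $R$-invariant. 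Moreover, since projection onto a subset of coordinates is continuous in the weak-$*$ topology, the two displayed limits above identify the marginals of $\sigma$: the projection onto the first $l$ factors equals $\mu_1 \times \cdots \times \mu_l$, and the projection onto the last $m$ factors equals $\nu_1 \times \cdots \times \nu_m$.

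By the disjointness hypothesis (3), the only $R$-invariant probability measure on $X^{l+m}$ with these two marginals is the product $\mu_1 \times \cdots \times \mu_l \times \nu_1 \times \cdots \times \nu_m$. Hence $\sigma$ equals this product measure. Since the tight sequence $(\sigma_N)$ has a unique weak-$*$ accumulation point, it converges weakly-$*$ to that product measure. Applying the reverse direction $(ii) \Rightarrow (i)$ of Theorem \ref{lem:pje_gp} yields the desired pointwise joint ergodicity of $S_1, \dots, S_l, T_1, \dots, T_m$ with respect to $(\mu_1, \dots, \mu_l, \nu_1, \dots, \nu_m)$. I do not anticipate a genuine obstacle here: the mildly technical point is the verification of $R$-invariance of $\sigma$, which is routine, and disjointness does exactly the bookkeeping it was designed for.
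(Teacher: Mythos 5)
Your proposal is correct and follows essentially the same route as the paper's own proof: pass to the equidistribution characterization of Theorem \ref{lem:pje_gp}, form the joint empirical measures, observe that any weak-$*$ limit point is invariant under the product transformation with the prescribed marginals, and invoke disjointness to identify it as the product measure. The only difference is that you spell out the Krylov--Bogolyubov invariance computation explicitly, which the paper leaves as an observation.
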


\begin{proof} 
By the assumptions (1) and (2), Theorem \ref{lem:pje_gp} implies that there exists a full measure set $A \in \mathcal{B}$ such that if $x \in A$, then
\[ \frac{1}{N} \sum_{n=0}^{N-1} \delta_{S_1^n x} \times \cdots \times \delta_{S_l^n x}   \rightarrow \mu_1 \times \cdots \times \mu_l \]
and 
\[ \frac{1}{N} \sum_{n=0}^{N-1} \delta_{T_1^n x} \times \cdots \times \delta_{T_m^n x}   \rightarrow \nu_1 \times \cdots \times \nu_l. \]
Now consider the sequence of measures on $X^{l+m}$ 
\[ \rho_{N} 
:= \frac{1}{N} \sum_{n=0}^{N-1} \delta_{S_1^n x} \times \cdots \times \delta_{S_l^n x} \times \delta_{T_1^n x} \times \cdots \times \delta_{T_m^n x}, \quad N= 1, 2, \dots. \]
Let $\rho$ be a weak* limit point of $(\rho_N)$ and let us show that $\rho = \mu_1 \times \cdots \times \mu_l \times \nu_1 \times \cdots \times \nu_m$. 
Note that $\rho$ is $(S_1 \times \cdots \times S_l) \times (T_1 \times \cdots \times T_m)$-invariant and that the projections of $\rho$ on $\mathsf{X}^{(1)}$ and $\mathsf{X}^{(2)}$ are, correspondingly, $\mu_1 \times \cdots \times \mu_l$ and $\nu_1 \times \cdots \times \nu_m$.
So, the desired result follows from the assumption that $\mathsf{X}^{(1)}$ and $\mathsf{X}^{(2)}$ are disjoint. 
\end{proof}

The following corollary of Theorem \ref{thm:disjoint:extension} provides a convenient criterion of pointwise joint ergodicity.
A special case when $k=2$ and $\mu_1 = \mu_2$ was obtained by Berend (see Theorem 2.2 in \cite{Be}; cf. also Theorem I.6 in \cite{Fur}).
\begin{Corollary}
\label{cor5.3}
Let $X$ be a compact metric space and let $\mathcal{B}$ be the Borel $\sigma$-algebra of $X$.
Let $\mathsf{X}_i = (X, \mathcal{B}, \mu_i, T_i)$, $1 \leq i \leq k$, be ergodic probability measure preserving systems. 
Suppose that  $\mu_1, \dots, \mu_k$ are pairwise equivalent and $\mathsf{X}_1, \dots, \mathsf{X}_k$ are mutually disjoint. Then $T_1, \cdots, T_k$ are pointwise jointly ergodic with respect to $(\mu_1, \dots, \mu_k)$.
\end{Corollary}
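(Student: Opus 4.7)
The plan is to derive Corollary \ref{cor5.3} from Theorem \ref{thm:disjoint:extension} by induction on $k$. The base case $k=1$ is immediate: pointwise joint ergodicity with respect to a single measure is just the conclusion of the pointwise ergodic theorem applied to the ergodic transformation $T_1$.

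For the inductive step, assume the statement holds for $k-1$ systems and let $\mathsf{X}_1, \dots, \mathsf{X}_k$ be as in the hypothesis. I will apply Theorem \ref{thm:disjoint:extension} with $l = k-1$, $m=1$, $S_i = T_i$ for $1 \le i \le k-1$, and $T_1 = T_k$ (in the notation of that theorem). The pairwise equivalence of the measures is inherited directly. What needs verification are the three numbered hypotheses of Theorem \ref{thm:disjoint:extension}. Hypothesis (2) is trivial since $T_k$ is ergodic with respect to $\mu_k$, so the pointwise ergodic theorem gives pointwise joint ergodicity of the single system with respect to $(\mu_k)$.

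For hypothesis (1), I need $T_1, \dots, T_{k-1}$ to be pointwise jointly ergodic with respect to $(\mu_1, \dots, \mu_{k-1})$, which by the induction hypothesis will follow once I know that $\mathsf{X}_1, \dots, \mathsf{X}_{k-1}$ are mutually disjoint. For this, let $\nu$ be any $T_1 \times \cdots \times T_{k-1}$-invariant probability measure on $X^{k-1}$ whose projection to the $i$-th coordinate is $\mu_i$. Then $\nu \times \mu_k$ is a $T_1 \times \cdots \times T_k$-invariant measure on $X^k$ all of whose marginals equal the correct $\mu_i$, so mutual disjointness of $\mathsf{X}_1, \dots, \mathsf{X}_k$ forces $\nu \times \mu_k = \mu_1 \times \cdots \times \mu_k$, and projecting out the last coordinate gives $\nu = \mu_1 \times \cdots \times \mu_{k-1}$.

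For hypothesis (3), namely that $\mathsf{X}^{(1)} := \mathsf{X}_1 \times \cdots \times \mathsf{X}_{k-1}$ and $\mathsf{X}^{(2)} := \mathsf{X}_k$ are disjoint, let $\rho$ be a $(T_1 \times \cdots \times T_{k-1}) \times T_k$-invariant measure on $X^{k-1} \times X$ whose marginal on $X^{k-1}$ is $\mu_1 \times \cdots \times \mu_{k-1}$ and whose marginal on the last coordinate is $\mu_k$. Identifying $X^{k-1} \times X$ with $X^k$, the measure $\rho$ is $T_1 \times \cdots \times T_k$-invariant and its projection onto the $i$-th coordinate is $\mu_i$ for every $i$; mutual disjointness of $\mathsf{X}_1, \dots, \mathsf{X}_k$ therefore yields $\rho = \mu_1 \times \cdots \times \mu_k$, which is precisely the product of the two given marginals. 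With all three hypotheses of Theorem \ref{thm:disjoint:extension} verified, that theorem delivers the pointwise joint ergodicity of $T_1, \dots, T_{k-1}, T_k$ with respect to $(\mu_1, \dots, \mu_k)$, completing the induction. The only delicate point is the bookkeeping between joinings on $X^k$ and joinings on $X^{k-1} \times X$, but since these are literally the same space, this reduces to a projection-of-marginals argument rather than a genuine obstacle.
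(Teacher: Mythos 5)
Your proof is correct and follows essentially the same route as the paper: an induction on $k$ that feeds Theorem \ref{thm:disjoint:extension} with $\mathsf{X}^{(1)} = \mathsf{X}_1 \times \cdots \times \mathsf{X}_{k-1}$ and $\mathsf{X}^{(2)} = \mathsf{X}_k$. The only difference is that you spell out the marginal-bookkeeping steps (that mutual disjointness of all $k$ systems yields both mutual disjointness of the first $k-1$ and disjointness of their product from $\mathsf{X}_k$) which the paper's ``and so on'' leaves implicit; these verifications are accurate.
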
 
\begin{proof} 
Take first $k=2$. Since $\mathsf{X}_1$ and $\mathsf{X}_2$ are disjoint, by Theorem \ref{thm:disjoint:extension}, $T_1, T_2$ are pointwise jointly ergodic. 
Take now $k=3$. 
Since $\mathsf{X}_1,  \mathsf{X}_2$ and $\mathsf{X}_3$ are mutually disjoint,  $\mathsf{X}_1 \times  \mathsf{X}_2$ and $\mathsf{X}_3$ are disjoint and so  by Theorem \ref{thm:disjoint:extension}, $T_1, T_2, T_3$ are pointwise jointly ergodic. And so on.
\end{proof}

\begin{Theorem}
\label{thm:dis}
Let $X$ be a compact metric space and let $\mathcal{B}$ be the Borel $\sigma$-algebra of $X$. 
Let $\mu_1, \dots, \mu_l, \nu_1, \dots, \nu_m$ be  pairwise equivalent probability measures on $(X, \mathcal{B})$. 
Let $\mathsf{X}_i^{(1)} = (X, \mathcal{B}, \mu_i, S_i)$, $1 \leq i \leq l$, and $\mathsf{X}_j^{(2)} = (X, \mathcal{B}, \nu_j, T_j)$, $1 \leq j \leq m$, be measure preserving systems such that 
\begin{enumerate}
    \item each $S_i$, $1 \leq i \leq l$, has zero entropy,
    \item each $T_i$, $1 \leq i \leq m$, is exact.
\end{enumerate}
If $ S_1, \dots, S_l$ are pointwise jointly ergodic with respect to $(\mu_1, \dots, \mu_l)$ and $T_1, \dots, T_m$ are pointwise jointly ergodic with respect to $(\nu_1, \dots, \nu_m)$, then $S_1, \dots, S_l, T_1, \dots, T_m$ are pointwise jointly ergodic with respect to $(\mu_1, \dots, \mu_l, \nu_1, \dots, \nu_m)$.
\end{Theorem}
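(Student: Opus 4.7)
The plan is to reduce the result to Theorem \ref{thm:disjoint:extension}, so the only thing that needs to be checked is that the two product systems
\[ \mathsf{X}^{(1)} = \mathsf{X}_1^{(1)} \times \cdots \times \mathsf{X}_l^{(1)} \quad \text{and} \quad \mathsf{X}^{(2)} = \mathsf{X}_1^{(2)} \times \cdots \times \mathsf{X}_m^{(2)} \]
are disjoint. Once this disjointness is in hand, Theorem \ref{thm:disjoint:extension} delivers the pointwise joint ergodicity of $S_1, \dots, S_l, T_1, \dots, T_m$ immediately.

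To establish disjointness, I would invoke the classical fact (item (2) in the list of examples preceding Theorem \ref{thm:disjoint:extension}) that zero entropy systems are disjoint from K-systems, i.e.\ from systems with completely positive entropy. So it suffices to verify two things: (a) $\mathsf{X}^{(1)}$ has zero entropy, and (b) $\mathsf{X}^{(2)}$ has completely positive entropy.

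For (a), since the entropy of a finite product of measure preserving systems is the sum of their entropies, $h(S_1 \times \cdots \times S_l) = \sum_{i=1}^l h(S_i) = 0$. For (b), each $T_j$ is exact, so by Rokhlin's theorem (quoted in the paper just before Theorem \ref{exactness}), each $T_j$ has completely positive entropy, equivalently its Pinsker $\sigma$-algebra is trivial. Since the Pinsker $\sigma$-algebra of a finite product is the product of the Pinsker $\sigma$-algebras of the factors, the Pinsker $\sigma$-algebra of $T_1 \times \cdots \times T_m$ is trivial, which is to say $\mathsf{X}^{(2)}$ has completely positive entropy.

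The main step to get right is (b): confirming that the product of exact (hence CPE) endomorphisms has CPE. This is a standard consequence of the product formula for Pinsker $\sigma$-algebras, but one should record it carefully in the non-invertible setting. Once (a) and (b) are verified, the chain
\[ \text{(a) + (b)} \;\Longrightarrow\; \mathsf{X}^{(1)} \text{ and } \mathsf{X}^{(2)} \text{ are disjoint} \;\Longrightarrow\; \text{Theorem \ref{thm:disjoint:extension}} \]
gives the desired pointwise joint ergodicity of $S_1,\dots,S_l,T_1,\dots,T_m$ with respect to $(\mu_1,\dots,\mu_l,\nu_1,\dots,\nu_m)$.
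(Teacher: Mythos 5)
Your proposal is correct in outline and follows the same overall strategy as the paper: reduce to Theorem \ref{thm:disjoint:extension}, observe that $\mathsf{X}^{(1)}$ has zero entropy because entropy is additive over finite products, and then argue that $\mathsf{X}^{(2)}$ is ``entropically large'' enough to be disjoint from any zero entropy system. Where you diverge is in how you certify this last point. You stay with the non-invertible product $T_1 \times \cdots \times T_m$, invoke Rokhlin's theorem to get CPE for each factor, and then appeal to the product formula for Pinsker $\sigma$-algebras to get CPE for the product --- a step you correctly flag as requiring care for endomorphisms. Two caveats: first, that product formula is itself a nontrivial theorem even for automorphisms, and its non-invertible version is most easily obtained by passing to natural extensions anyway; second, the disjointness statement you cite (Theorem 6.11 in \cite{Rud}, zero entropy systems are disjoint from K-automorphisms) is stated for invertible K-automorphisms, so it does not literally apply to your non-invertible CPE product. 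The paper sidesteps both issues at once: natural extensions of exact endomorphisms are K-automorphisms (Rokhlin), natural extension commutes with finite products and a product of K-automorphisms is K, so $T_1 \times \cdots \times T_m$ is a \emph{factor} of a K-automorphism; the K-automorphism is disjoint from the zero entropy system $S_1 \times \cdots \times S_l$, and disjointness passes to factors (Proposition I.1 in \cite{Fur}). If you replace your Pinsker-algebra step with this natural-extension detour, your argument becomes exactly the paper's proof; as written, it has a small but repairable gap at the point where you apply the invertible disjointness theorem to a non-invertible system.
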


\begin{proof}
Since a Cartesian product of zero entropy systems has zero entropy, $S_1 \times \cdots \times S_l$ has zero entropy. 
It is known  (\cite{Rok}) that natural extensions of exact transformations are K-automorphisms and a Cartesian product of K-automorphisms is K-automorphism. Thus, $T_1 \times \cdots \times T_m$ is a factor of a K-automorphism.  
It is also known that K-automorphisms are disjoint from zero entropy systems. Also, if $\mathsf{X}$ and $\mathsf{Y}$ are disjoint and $\mathsf{Z}$ is a factor of $\mathsf{X}$, then $\mathsf{Z}$ and $\mathsf{Y}$ are disjoint (Proposition I.1 in \cite{Fur}).  Thus, $S_1 \times \cdots \times S_l$ and $T_1 \times \cdots \times T_m$ are disjoint, and so Theorem \ref{disjoint-main-thm-pje} follows from \ref{thm:disjoint:extension}.  
\end{proof}

Theorems \ref{j.e.:pointwise} and \ref{thm:dis} lead to the following result.
\begin{Theorem}
\label{disjoint-main-thm-pje}
Let $\mu_1, \dots, \mu_k$ be probability measures on $([0,1], \mathcal{B})$, where $\mathcal{B}$ is the Borel $\sigma$-algebra of $[0,1]$, such that each $\mu_i$ is equivalent to the Lebesgue measure $\lambda$. 
For $1 \leq i \leq k$, let $T_i : [0,1] \rightarrow [0,1]$ be a $\mu_i$-preserving ergodic transformation. 
Let $m \leq k$.  In addition, assume that
\begin{enumerate}
\item for $1 \leq i \leq m$, $T_i$ has zero entropy and $T_1, \dots, T_m$ are pointwise jointly ergodic,
\item for $m+1 \leq i \leq k$, $(T_i, \mu_i, \mathcal{A}_i) \in \mathcal{T}$ along with an appropriate partition $\mathcal{A}_i$ such that $(T_i, \mu_i, \mathcal{A}_i)$ has properties E and M and entropy values $h(T_{m+1}), \cdots, h(T_k)$ are distinct.
\end{enumerate}
Then for any bounded measurable functions $f_1, \dots, f_k$,
\begin{equation}
\label{eq:Thm_Intro_PJE} 
\lim_{N \rightarrow \infty} \frac{1}{N} \sum_{n=0}^{N-1} f_1 ({T_1^n x})   f_2 ({T_2^n x}) \cdots  f_k ({T_k^n x}) = \prod_{i=1}^k \int f_i \, d \mu_i 
\end{equation}
for almost every $x \in [0,1]$.  
\end{Theorem}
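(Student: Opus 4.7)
The plan is to decompose the statement into two groups of transformations and then glue them via the disjointness machinery already established. First, I would apply Theorem \ref{j.e.:pointwise} to the second group $T_{m+1}, \dots, T_k$: by hypothesis each $(T_i, \mu_i, \mathcal{A}_i) \in \mathcal{T}$ has properties E and M, and the entropies $h(T_{m+1}), \dots, h(T_k)$ are distinct, so the theorem immediately yields that $T_{m+1}, \dots, T_k$ are pointwise jointly ergodic with respect to $(\mu_{m+1}, \dots, \mu_k)$. The first group $T_1, \dots, T_m$ is already assumed to be pointwise jointly ergodic.

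Next, I would combine the two groups using Theorem \ref{thm:dis}. That theorem requires (a) internal pointwise joint ergodicity of each group, (b) zero entropy for each transformation in the first group, and (c) exactness for each transformation in the second group. Conditions (a) and (b) are immediate from the hypotheses and the previous paragraph. For condition (c), I would invoke Theorem \ref{exactness}, which asserts that any $(T, \mu, \mathcal{A}) \in \mathcal{T}$ with property M is exact; since each $T_i$ for $m+1 \leq i \leq k$ satisfies property M by hypothesis, each such $T_i$ is exact.

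With the three hypotheses of Theorem \ref{thm:dis} in place (noting that all measures $\mu_1, \dots, \mu_k$ are pairwise equivalent because each is equivalent to $\lambda$, so the pointwise limit statements can be interpreted against a common full-measure set), the conclusion is that $T_1, \dots, T_m, T_{m+1}, \dots, T_k$ are pointwise jointly ergodic with respect to $(\mu_1, \dots, \mu_k)$. Translating pointwise joint ergodicity back into the Ces\`aro average formulation from Definition \ref{Def:PJE} gives \eqref{eq:Thm_Intro_PJE} for all bounded measurable $f_1, \dots, f_k$ and almost every $x \in [0,1]$.

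There is essentially no serious obstacle here: the theorem is a packaging result whose entire content lies in invoking the three earlier theorems \ref{j.e.:pointwise}, \ref{exactness}, and \ref{thm:dis} in the correct order. The only mild subtlety is making sure that the ``a.e.'' null sets produced by the two inner pointwise joint ergodicity statements are compatible when fed into Theorem \ref{thm:dis}, but since all $\mu_i$ are mutually equivalent to $\lambda$ this is automatic. Once the three cited results are in hand the proof is a single paragraph of bookkeeping.
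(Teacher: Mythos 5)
Your proof is correct and follows exactly the route the paper intends: the paper derives this theorem by combining Theorem \ref{j.e.:pointwise} (for the positive-entropy group) with Theorem \ref{thm:dis}, where the required exactness of the second group comes from Theorem \ref{exactness} via property M. No discrepancies.
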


The following extension of Theorem \ref{Thm1.2:pje_TimesbGaussBeta} (Theorem \ref{Cor:pointwise}) is an immediate corollary of Theorem \ref{disjoint-main-thm-pje}.

\begin{Corollary}
\label{ex:notThm4.1}
Let $\mu_0$ be a probability measure on $[0,1]$, which is equivalent to $\lambda$. Let $S: [0,1] \rightarrow [0,1]$ be a $\mu_0$-preserving, ergodic transformation with $h(S) = 0$.  
Let $s, t \in \mathbb{N}$. Let $b_1, \dots, b_s$ be distinct integers with $b_i \geq 2$ for all $i = 1, 2, \dots, s$ and let $\beta_1, \dots, \beta_t$ be distinct non-integers with $\beta_i > 1$ for all $i =1, 2, \dots, t$. Then
\begin{enumerate}
\item $S, T_{b_1}, \dots, T_{b_s}, T_{\beta_1}, \dots T_{\beta_t}$ are pointwise jointly ergodic:\\
 for any bounded measurable functions $f_0, f_1, \dots, f_s, g_1, \dots, g_t$,
\begin{equation*}
\lim_{N \rightarrow \infty} \frac{1}{N} \sum_{n=0}^{N-1} f_0(S^n x)  \prod_{i=1}^s f_i ({b_i^n x}) \prod_{j=1}^t g_j ({T_{\beta_j}^n x})  = \int f_0 \, d \mu_0 \, \prod_{i=1}^s \int f_i \, d \lambda  \, \prod_{j=1}^t \int g_j \, d \mu_{\beta_j}
\end{equation*}
for almost every $x \in [0,1]$.  
\item $S, T_G, T_{b_1}, \dots, T_{b_s}, T_{\beta_1}, \dots T_{\beta_t}$ are pointwise jointly ergodic if
$\log \beta_i \ne \frac{\pi^2}{6 \log 2}$ for $1 \leq i \leq t$: \\
for any bounded measurable functions $f_0,  g_0, f_1, \dots, f_s, g_1, \dots, g_t$,
\begin{equation*} 
\lim_{N \rightarrow \infty} \frac{1}{N} \sum_{n=0}^{N-1} f_0 (S^n x) \, g_0 (T_G^n x) \, \prod_{i=1}^s f_i ({b_i^n x}) \, \prod_{j=1}^t g_j ({T_{\beta_j}^n x})  = \int f_0 \, d \mu_0 \int g_0 \, d \mu_G  \, \prod_{i=1}^s \int f_i \, d \lambda \, \prod_{j=1}^t \int g_j \, d \mu_{\beta_j}
\end{equation*}
for almost every $x \in [0,1]$. 
\end{enumerate} 
\end{Corollary}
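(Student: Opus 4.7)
Both assertions follow from Theorem \ref{disjoint-main-thm-pje} by taking $m=1$ with $S$ playing the role of the single zero-entropy transformation (call it $T_1 := S$), and letting the remaining transformations in condition (2) of that theorem be $T_{b_1}, \dots, T_{b_s}, T_{\beta_1}, \dots, T_{\beta_t}$ for part (1), and the same list together with $T_G$ for part (2). The pointwise joint ergodicity of the one-element family $\{S\}$ required by condition (1) of Theorem \ref{disjoint-main-thm-pje} reduces to the classical pointwise ergodic theorem applied to the ergodic system $([0,1],\mathcal{B},\mu_0,S)$, so condition (1) is automatic.

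\textbf{Verification of conditions on the remaining maps.} The material of Subsection \ref{sec:appendix} shows that each of $(T_{b_i}, \lambda, \mathcal{A}_{b_i})$, each $(T_{\beta_j}, \mu_{\beta_j}, \mathcal{A}_{\beta_j})$, and $(T_G, \mu_G, \mathcal{A}_G)$ belong to $\mathcal{T}$ and satisfy properties E and M; moreover, since all of $\lambda$, $\mu_{\beta_j}$, and $\mu_G$ are equivalent to Lebesgue measure, they are pairwise equivalent to $\mu_0$ as required. The entropies are $h(T_{b_i})=\log b_i$, $h(T_{\beta_j})=\log \beta_j$, and $h(T_G)=\pi^2/(6\log 2)$.

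\textbf{Distinctness of entropies.} For part (1) the non-zero entropies are $\log b_1,\dots,\log b_s,\log \beta_1,\dots,\log \beta_t$; these are pairwise distinct because the $b_i$ are distinct integers $\ge 2$, the $\beta_j$ are distinct non-integers $>1$, and $\log b_i = \log \beta_j$ would force the integer $b_i$ to equal the non-integer $\beta_j$, which is impossible. For part (2), we additionally need $h(T_G)$ to be distinct from every $h(T_{b_i})$ and every $h(T_{\beta_j})$: the latter is exactly the standing hypothesis $\log \beta_j \ne \pi^2/(6\log 2)$, while the former reduces to checking $\pi^2/(6\log 2) \ne \log b$ for any integer $b\ge 2$, which holds since $\pi^2/(6\log 2) \approx 2.3731$ lies strictly between $\log 10 \approx 2.3026$ and $\log 11 \approx 2.3979$. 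With all hypotheses of Theorem \ref{disjoint-main-thm-pje} satisfied, the conclusion is immediate.

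\textbf{Main obstacle.} There is essentially no obstacle: the corollary really is, as the authors note, an immediate consequence of Theorem \ref{disjoint-main-thm-pje} together with the catalog of examples verified in Subsection \ref{sec:appendix}. The only mildly delicate point is the elementary numerical separation $\log b \ne \pi^2/(6\log 2)$ for integer $b\ge 2$; all other verifications are either standard (pointwise ergodic theorem for $S$) or were done earlier in the paper (properties E and M for $T_b$, $T_\beta$, $T_G$).
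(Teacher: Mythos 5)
Your proposal is correct and is exactly the route the paper intends: the paper states the corollary as an immediate consequence of Theorem \ref{disjoint-main-thm-pje}, and you supply precisely the verifications that are left implicit (the single zero-entropy system $S$ handled by the pointwise ergodic theorem, properties E and M from Subsection \ref{sec:appendix}, and the distinctness of the entropies $\log b_i$, $\log\beta_j$, $\pi^2/(6\log 2)$, including the numerical check that $\pi^2/(6\log 2)$ is not $\log b$ for any integer $b\ge 2$). No gaps.
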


\section{Equivalent forms of normality and joint normality of F-representations}
\label{Sec:FRep}

In this section we deal with various equivalent forms of normality and joint normality, which were briefly discussed in the Introduction.
We start with proving Theorem \ref{Intro:equiv:normal:F-representation} from the Introduction, which we formulate here again for reader's convenience. 

\begin{Theorem}[Theorem \ref{Intro:equiv:normal:F-representation}]
\label{equiv:normal:F-representation} 
Let ${\mathbf{T}} = (T, \mu, \mathcal{A}) \in \tilde{\mathcal{T}}$ such that $T$ is totally ergodic. Let $x \in [0,1]$.
The following statements are equivalent.
\begin{enumerate}[(i)]
\item $x$ is ${\mathbf{T}}$-normal. 
\item $(T^n x)$ is equidistributed in $[0,1]$ with respect to $\mu$, that is, for any continuous function $f:[0,1] \rightarrow \mathbb{R}$,
\begin{equation}
\label{condii:thm6.1} 
\lim_{N \rightarrow \infty} \frac{1}{N} \sum_{n=0}^{N-1} f(T^n x) = \int f \, d \mu.
\end{equation}
\item For any $m \geq 2$, $x$ is ${\mathbf{T}}_m$-normal. 
\item $T^k x$ is simply ${\mathbf{T}}_m$-normal for all $k \geq 0$ and $m \geq 1$.
\item $x$ is simply ${\mathbf{T}}_m$-normal for all $m \geq 1$. 
\item There exists a sequence $(m_l)_{l \in \mathbb{N}}$ with $m_1 < m_2 < \cdots$ such that $x$ is simply ${\mathbf{T}}_{m_l}$-normal for any $l \in \mathbb{N}$.
\end{enumerate}
\end{Theorem}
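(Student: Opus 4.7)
The plan is to prove the cycle $(i)\Leftrightarrow(ii)\Rightarrow(iii)\Rightarrow(iv)\Rightarrow(v)\Rightarrow(vi)\Rightarrow(i)$, with $(vi)\Rightarrow(i)$ being the main obstacle.

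For $(i)\Leftrightarrow(ii)$ the standard approximation argument applies: $\mathbf{T}$-normality of $x$ asserts that the empirical measures $\nu_N=\frac{1}{N}\sum_{n=0}^{N-1}\delta_{T^n x}$ give the correct mass to every cylinder, and since $\bigcup_{k\geq 1}\sigma(\mathcal{A}^k)$ generates $\mathcal{B}$ modulo $\lambda$-null (equivalently $\mu$-null) sets, this is the same as $\nu_N\to\mu$ weak*. For $(ii)\Rightarrow(iii)$ I would use total ergodicity of $T$: let $\nu$ be any weak* limit point of $\frac{1}{N}\sum_{n=0}^{N-1}\delta_{T^{mn}x}$, and note that the regrouping $\nu_{mN}=\frac{1}{m}\sum_{j=0}^{m-1}\frac{1}{N}\sum_{n=0}^{N-1}\delta_{T^{mn+j}x}$ together with $(ii)$ yields $\mu=\frac{1}{m}\sum_{j=0}^{m-1}(T^j)_*\nu$; each $(T^j)_*\nu$ is then $\ll\mu$ and $T^m$-invariant, hence equal to $\mu$ by the $\mu$-ergodicity of $T^m$, so $\nu=\mu$. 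Applying $(i)\Leftrightarrow(ii)$ to $\mathbf{T}_m$ then gives $\mathbf{T}_m$-normality.

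For $(iii)\Rightarrow(iv)$, fix $k\geq 0$, $m\geq 1$, and $S=(v_1,\ldots,v_m)\in D^m$; write $k=qm+r$ with $0\leq r<m$. Using $r_j(T^k x)=r_{j+k}(x)$, simple $\mathbf{T}_m$-normality of $T^k x$ for the block $S$ becomes the assertion that the pattern $S$ occurs at positions $mn+r+1,\ldots,mn+r+m$ in $(r_j(x))$ with asymptotic frequency $\mu(C_{\mathbf{T}}(S))$. For $r=0$ this is immediate from simple $\mathbf{T}_m$-normality of $x$; for $r\geq 1$ the pattern straddles two consecutive $\mathbf{T}_m$-blocks, so I would apply $\mathbf{T}_m$-normality with strings of length $2$, sum over the undetermined entries in the adjacent blocks, and use $T$-invariance of $\mu$ to collapse the sum to $\mu(T^{-r}C_{\mathbf{T}}(S))=\mu(C_{\mathbf{T}}(S))$. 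The case $m=1$ reduces to $\mathbf{T}$-normality of $x$ (obtained from $(iii)$ via $(i)\Leftrightarrow(ii)$), which is preserved under finite shifts. The implications $(iv)\Rightarrow(v)$ (set $k=0$) and $(v)\Rightarrow(vi)$ (take $m_l=l$) are immediate.

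The main obstacle is $(vi)\Rightarrow(i)$, which I expect to handle by a direct counting argument that bypasses ergodicity altogether. Fix $S=(v_1,\ldots,v_k)\in D^k$ and take $m=m_l$. Partition $\{1,\ldots,M\}$ into $\lfloor M/m\rfloor$ consecutive length-$m$ blocks of the $\mathbf{T}_m$-encoding, and split occurrences of $S$ in the first $M$ letters of $r(x)$ into \emph{internal} (wholly contained in a single block) and \emph{straddling} (crossing a boundary). For each internal offset $j\in\{0,\ldots,m-k\}$, simple $\mathbf{T}_m$-normality gives the asymptotic count $\lfloor M/m\rfloor\sum_{S'\in D^m:\, s'_{j+1}=v_1,\ldots,s'_{j+k}=v_k}\mu(C_{\mathbf{T}}(S'))=\lfloor M/m\rfloor\,\mu(C_{\mathbf{T}}(S))$ after using $T$-invariance of $\mu$; summing over $j$ yields internal count $\lfloor M/m\rfloor(m-k+1)\mu(C_{\mathbf{T}}(S))$, while the straddling count is at most $(k-1)\lfloor M/m\rfloor$. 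Dividing by $M$ sandwiches the frequency $f_S(M)/M$ asymptotically within an interval of width $O(k/m)$ around $\mu(C_{\mathbf{T}}(S))$; letting $l\to\infty$ and hence $m_l\to\infty$ forces $f_S(M)/M\to\mu(C_{\mathbf{T}}(S))$, which is precisely $\mathbf{T}$-normality of $x$.
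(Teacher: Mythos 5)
Your architecture is sound, and your treatment of the hard implication $(vi)\Rightarrow(i)$ is, in outline, the same block-counting argument the paper uses (internal versus straddling occurrences, error $O(k/m_l)$, then $m_l\rightarrow\infty$). The one genuine gap is that both your $(iii)\Rightarrow(iv)$ and your $(vi)\Rightarrow(i)$ silently interchange the limit in $N$ with a countably infinite sum over blocks $S'\in D^m$. The class $\tilde{\mathcal{T}}$ allows $\mathcal{A}$ to be countably infinite (the Gauss map is the motivating example), so the step ``simple $\mathbf{T}_m$-normality gives the asymptotic count $\lfloor M/m\rfloor\sum_{S'}\mu(C_{\mathbf{T}}(S'))$'' is not a finite sum of convergent frequencies: you know the limiting frequency of each individual block $S'$, but you are asserting the limiting frequency of a countable union of cylinders. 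Fatou gives the lower bound for free; the matching upper bound needs either Fatou applied to the complementary (also countable) union of blocks, or the paper's device of truncating to finitely many blocks $A_1,\dots,A_M$ with $\sum_{i>M}\mu(C_{\mathbf{T}}(A_i))<\epsilon$ and bounding the tail contribution by $m_l$ times the limiting frequency of the event $T^{m_l n}x\notin\bigcup_{i\le M}C_{\mathbf{T}}(A_i)$, which is at most $\epsilon$. This truncation occupies roughly half of the paper's proof of $(vi)\Rightarrow(i)$ and is absent from your write-up; as written, your argument is complete only for finite $D$.

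Two secondary remarks. First, your $(ii)\Rightarrow(iii)$ via weak* limit points, the regrouping identity and ergodicity of $T^m$ is a legitimate alternative to the paper's route (a Pyatetskii-Shapiro-type domination criterion, Lemma \ref{criteria-generic}, combined with Lemma \ref{Sec6.1:Thm6.3}), but pushing a weak* limit $\nu$ forward under $T^j$ and passing to the limit in $(T^j)_*\rho_{N}$ requires knowing that $\nu$ gives no mass to the discontinuity set of $T^j$; since $T$ is only piecewise continuous, you must first establish $\nu\le m\mu$ (which does follow from dominating the averages along $m\mathbb{N}$ by the full Ces\`aro averages) before the identity $\mu=\frac{1}{m}\sum_{j=0}^{m-1}(T^j)_*\nu$ is justified. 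Second, your $(iii)\Rightarrow(iv)$ is an unnecessary detour that re-imports the infinite-sum problem: once $(i)\Leftrightarrow(iii)$ is in hand, the paper simply observes that $\mathbf{T}$-normality of $x$ trivially passes to $T^kx$ (shifting the symbol sequence changes each count by at most $k$), and then applies $(i)\Leftrightarrow(iii)$ to the point $T^kx$ to get $\mathbf{T}_m$-normality, hence simple $\mathbf{T}_m$-normality; this avoids the straddling analysis for that step entirely.
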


The proof of Theorem \ref{equiv:normal:F-representation} rests on two useful lemmas which are of independent interest. Before formulating and proving these lemmas we need to introduce some additional terminology. 

Let $X$ be a compact metric space and let $M(X)$ denote the space of probability Borel measures on $X$. 
For $\mu \in M(X)$, let $\mathcal{R}(X, \mu)$ denote the space of Riemann integrable functions on $X$. Note that  $f \in \mathcal{R} (X, \mu)$ if and only if $\mu(D_f) = 0$, where $D_f$ is the set of points of discontinuity of $f$. 
Also, note that if  the sequence $\mu_1, \mu_2, \dots \in M(X)$ converges to the measure $\mu$ in weak* topology, then, by the standard approximation argument, for any $f \in \mathcal{R} (X, \mu)$,
 \[\int f \, d \mu_n \rightarrow \int f \, d \mu.\]

We will say that a measure preserving system $(X, \mathcal{B}, \mu, T)$ is {\em admissible} if $X$ is a compact metric space, $\mathcal{B}$ is  the $\sigma$-algebra of Borel sets of $X$, $\mu$ is a probability measure on $\mathcal{B}$, and $T: X \rightarrow X$ is a $\mu$-preserving transformation with the property that the set $S$ of points of discontinuity of $T$ satisfies $\mu(S) = 0$. 
Following \cite{GHL}, we will call any such transformation {\em $\mu$-continuous}.
Note that if  $(X, \mathcal{B}, \mu, T)$ is an admissible system, so is $(X, \mathcal{B}, \mu, T^m)$ for any $m \geq 2$.

Given an admissible system $(X, \mathcal{B}, \mu, T)$ and a point $x \in X$ we will say that $x$ is  {\em $(T, \mu)$-generic} if for any continuous function $f$ on $X$, 
\[ \lim_{N \rightarrow \infty} \frac{1}{N} \sum_{n=0}^{N-1} f(T^n x) = \int f \, d \mu,\]
or, equivalently,
\[\frac{1}{N} \sum_{n=0}^{N-1} \delta_{T^n x} \rightarrow \mu \,\, \text{ in weak* topology.} \]
Note that a point $x \in X$ is $(T, \mu)$-generic if and only if  for any $f \in \mathcal{R} (X, \mu)$, 
\[ \lim_{N \rightarrow \infty} \frac{1}{N} \sum_{n=0}^{N-1} f(T^n x) = \int f \, d \mu.\]

\begin{Remark}
\label{Rem:genequiv}
Let $\mathbf{T} = (T, \mu, \mathcal{A}) \in \tilde{\mathcal{T}}$. Then $([0,1], \mathcal{B}, \mu, T)$, where $\mathcal{B}$ is the Borel $\sigma$-algebra of $[0,1]$, is an admissible system.
Moreover, the following statements are equivalent.
\begin{enumerate}[(i)]
\item $x \in [0,1]$ is $(T, \mu)$-generic (i.e. for any $f \in C ([0,1])$, $\lim\limits_{N \rightarrow \infty} \frac{1}{N} \sum\limits_{n=0}^{N-1} f(T^n x) = \int f \, d \mu$).
\item  For any interval $I \subset [0,1]$, 
\begin{equation}
\label{eq:rem:genequiv}
   \lim_{N \rightarrow \infty} \frac{\# \{ 0 \leq n \leq N - 1: T^n x  \in  I \}}{N} =\mu(I). 
\end{equation}
\end{enumerate}
(One has only to justify (ii) $\Rightarrow$ (i). It is not hard to see that this implication follows form the fact that linear combinations of characteristic functions of intervals are dense in $\mathcal{R} ([0,1], \mu)$.)
\end{Remark}

The following lemma is a natural dynamical version of a classical result due to Pyatetskii-Shapiro. (See Theorem 2 in \cite{Py}. See also Theorem 3 in \cite{PoPy} and  Lemma 8.1 in Chapter 1 of \cite{KN}.)
\begin{Lemma}
\label{criteria-generic}
Let  $(X, \mathcal{B}, \mu, T)$ be an ergodic admissible system. 
If $x \in X$ has the property that there is a constant $C > 0$ such that for any non-negative continuous function $f$ on $X$ 
\begin{equation}
\label{cond:equiv} 
\limsup_{N \rightarrow \infty} \frac{1}{N} \sum_{n=0}^{N-1} f(T^n x) \leq C  \int f \, d \mu,
\end{equation}
then $x$ is $(T, \mu)$-generic.
\end{Lemma}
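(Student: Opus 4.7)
The plan is to analyze weak$^*$ limit points of the empirical measures $\nu_N := \frac{1}{N}\sum_{n=0}^{N-1} \delta_{T^n x}$ and show that the only possible limit is $\mu$ itself, which forces $x$ to be $(T,\mu)$-generic.

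First, since $X$ is a compact metric space, the set $M(X)$ of Borel probability measures is weak$^*$ sequentially compact, so every subsequence of $(\nu_N)$ has a further convergent subsequence. Let $\nu$ be any weak$^*$ limit point, $\nu_{N_k} \to \nu$. Applying the hypothesis to arbitrary non-negative continuous $f$ gives $\int f\,d\nu = \lim_k \int f\,d\nu_{N_k} \leq C\int f\,d\mu$, so $\nu \leq C\mu$ as Borel measures. In particular, $\nu \ll \mu$.

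The key step is to show $\nu$ is $T$-invariant, despite $T$ being only $\mu$-continuous. Let $g \in C(X)$; then $g\circ T$ is continuous outside the discontinuity set $S$ of $T$, and $\mu(S)=0$. Because $\nu \ll \mu$, we also have $\nu(S)=0$, so $g\circ T \in \mathcal{R}(X,\nu)$. By the remark (just before the statement) that weak$^*$ convergence of measures entails convergence of integrals of Riemann-integrable functions with respect to the limit measure, we get
\[
\int g\circ T\, d\nu = \lim_k \frac{1}{N_k}\sum_{n=0}^{N_k-1} g(T^{n+1}x) = \lim_k \frac{1}{N_k}\sum_{n=0}^{N_k-1} g(T^n x) = \int g\, d\nu,
\]
where the middle equality uses that the two Cesàro sums differ by at most $2\|g\|_\infty/N_k \to 0$. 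Since this holds for all $g\in C(X)$, $\nu$ is $T$-invariant.

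Finally, the Radon–Nikodym derivative $\varphi = d\nu/d\mu$ satisfies $0 \leq \varphi \leq C$, and $T$-invariance of both $\nu$ and $\mu$ gives $\varphi\circ T = \varphi$ $\mu$-a.e. By ergodicity of $\mu$, $\varphi$ is $\mu$-a.e.\ constant; since both $\nu$ and $\mu$ are probability measures, the constant equals $1$, so $\nu=\mu$. As every weak$^*$ limit point of $(\nu_N)$ equals $\mu$ and $M(X)$ is weak$^*$ compact, the whole sequence converges to $\mu$, which is precisely the statement that $x$ is $(T,\mu)$-generic. The main subtlety in the argument is the $T$-invariance step, since one does not have global continuity of $T$; the resolution comes from the absolute continuity $\nu \ll \mu$, which transfers the $\mu$-continuity of $T$ into $\nu$-continuity.
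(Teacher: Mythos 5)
Your argument coincides with the paper's proof up through the key point: you pass to a weak\textsuperscript{*} limit point $\nu$ of the empirical measures, deduce $\nu\leq C\mu$ (hence $\nu\ll\mu$) from the hypothesis, and then obtain $T$-invariance of $\nu$ by observing that $g\circ T$ is Riemann integrable with respect to $\nu$ because $\nu$ gives no mass to the discontinuity set of $T$. This is exactly the paper's route, and that invariance step is handled correctly. Where you diverge is the endgame, and there is a gap there. You assert that ``$T$-invariance of both $\nu$ and $\mu$ gives $\varphi\circ T=\varphi$ $\mu$-a.e.'' for $\varphi=d\nu/d\mu$. That implication is immediate for \emph{invertible} $T$, but the transformations in this paper (times-$b$ maps, the Gauss map, $\beta$-transformations) are non-invertible, and for non-invertible $T$ the invariance of $\nu$ only yields $\hat{T}\varphi=\varphi$ for the transfer operator $\hat{T}$, equivalently $\varphi\circ T=E_\mu\left[\varphi\mid T^{-1}\mathcal{B}\right]$, which is not the same as $\varphi\circ T=\varphi$ without further argument. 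Asserting it outright is close to assuming the conclusion, since ``the only invariant density is the constant one'' is essentially the statement $\nu=\mu$ that you are trying to prove.

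The gap is repairable in your setting: because $0\leq\varphi\leq C$, you have $\varphi\in L^2(\mu)$, and then $\|\varphi\circ T\|_2=\|\varphi\|_2$ together with $\varphi\circ T=E_\mu\left[\varphi\mid T^{-1}\mathcal{B}\right]$ and the fact that conditional expectation is an orthogonal projection forces $\varphi\circ T=\varphi$, after which ergodicity gives $\varphi\equiv 1$. Alternatively you could simply quote the standard fact that a $T$-invariant probability measure absolutely continuous with respect to an ergodic invariant probability measure must equal it. The paper avoids the issue entirely: it applies the pointwise ergodic theorem for $\mu$, notes that the $\mu$-a.e.\ convergence $\frac{1}{N}\sum_{n=0}^{N-1}f(T^ny)\to\int f\,d\mu$ also holds $\nu$-a.e.\ since $\nu\ll\mu$, and integrates against $\nu$ using dominated convergence and the already-established identity $\int f\circ T^n\,d\nu=\int f\,d\nu$ to conclude $\int f\,d\nu=\int f\,d\mu$. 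Either fix is fine, but as written your final step does not stand on its own.
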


\begin{proof} 
Define a sequence of measures in $M(X)$
\[ \nu_N := \frac{1}{N} \sum_{n=0}^{N-1} \delta_{T^n x}, \quad N \in \mathbb{N}. \]
It is sufficient to show that $\nu_N \rightarrow \mu$ in weak* topology.

Let $\nu$ be a weak* limit point of the sequence $(\nu_N)$ and let  $(\nu_{N_k})$ be a subsequence of $(\nu_N)$  such that $\nu_{N_k} \rightarrow \nu$.  
Note that by \eqref{cond:equiv}, for any nonnegative continuous function $f$ on $X$,  
\[ \int f d \nu = \lim_{k \rightarrow \infty} \int f \, d \nu_{N_k} = \lim_{k \rightarrow \infty}  \frac{1}{N_k} \sum_{n=0}^{N_k-1} f(T^n x)   \leq C \int f d \mu,\]
so $\nu \ll \mu$ and hence $\mathcal{R} (X, \mu) \subset \mathcal{R} (X, \nu)$.

Now let us show that $\int f \, d \nu = \int f \circ T \, d \nu$ for any $f \in C(X)$.   
Note that since $T$ is $\mu$-continuous, for any $f \in C(X)$, $f \circ T \in \mathcal{R} (X, \mu) \subset \mathcal{R} (X, \nu)$, and so 
\[\ \int f \, d \nu_{N_k} \rightarrow \int f \, d \nu \quad \text{and} \quad \int f \circ T \, d \nu_{N_k} \rightarrow \int f \circ T \, d \nu.\]
Since \[ \left| \int f \, d \nu_{N_k} -  \int f \circ T \, d \nu_{N_k} \right| 
= \left| \frac{1}{N_k} \sum_{n=0}^{N_k -1 } \left( f(T^n x) - f(T^{n+1} x) \right) \right| 
\leq \frac{2}{N_k} \| f \|_u \rightarrow 0 \quad \text{as } k \rightarrow \infty,\]
we get $\int f \, d \nu = \int f \circ T \, d \nu$ for any $f \in C(X)$.  
This, in turn, implies that  for any  $f \in C(X)$
\begin{equation}
\label{eq6.3}
 \int f \circ T^n d \nu = \int f d \nu \,  \text{ for any } n \in \mathbb{N}. 
 \end{equation}
Fix a continuous function $f$ on $X$. Since $T$ is ergodic, the pointwise ergodic theorem implies 
\begin{equation} 
\label{eq6.4}
\lim_{N \rightarrow \infty} \frac{1}{N} \sum_{n=0}^{N-1} f(T^n y) = \int f \, d \mu
\end{equation}
for $\mu-\text{a.e. } y$. Moreover, the fact that $\nu \ll \mu$ implies that \eqref{eq6.4} also holds for $\nu-\text{a.e. } y$. 
Note that
\begin{equation}
\label{eq6.5} 
\left| \frac{1}{N} \sum_{n=0}^{N-1} f(T^n y) \right| \leq \| f \|_u \quad \text{for any } y \in X.
\end{equation}
Using  \eqref{eq6.3} and \eqref{eq6.5} and the fact that \eqref{eq6.4} holds for $\nu-\text{a.e. } y$, we get with the help of the dominated convergence  theorem
\begin{align*}
\int f \, d \nu &=  \int \frac{1}{N} \sum_{n=0}^{N-1} f(T^n y) d \nu(y)  = \lim_{N \rightarrow \infty} \int \frac{1}{N} \sum_{n=0}^{N-1} f(T^n y) d \nu(y) =  \int \lim_{N \rightarrow \infty}  \frac{1}{N} \sum_{n=0}^{N-1} f(T^n y) d \nu(y) \\
&= \int \int f \, d \mu \, d \nu = \int f d \mu.
\end{align*}
Thus $\nu = \mu$, which implies that  any limit point of $(\nu_N)$ is $\mu$, and so $\nu_N \rightarrow \mu$ in weak* topology.
\end{proof}

\begin{Lemma}
\label{Sec6.1:Thm6.3}
Let  $(X, \mathcal{B}, \mu, T)$ be a totally ergodic admissible system.
Let $m \geq 2$.
Then $x \in X$ is $(T, \mu)$-generic if and only if $x$ is $(T^m, \mu)$-generic.
\end{Lemma}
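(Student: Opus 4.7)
The plan is to treat the two implications separately, using in both cases the observation that $T^m$ inherits $\mu$-continuity from $T$. The discontinuity set of $T^m$ is contained in $\bigcup_{i=0}^{m-1} T^{-i}(S)$, where $S$ is the discontinuity set of $T$; since $\mu(S)=0$ and $\mu$ is $T$-invariant, this union is $\mu$-null, so $(X,\mathcal{B},\mu,T^m)$ is an admissible system. Total ergodicity of $T$ then makes $(X,\mathcal{B},\mu,T^m)$ an ergodic admissible system, and Lemma \ref{criteria-generic} is available for both $T$ and $T^m$.

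For the forward direction ($(T,\mu)$-generic implies $(T^m,\mu)$-generic), I would apply Lemma \ref{criteria-generic} to $T^m$. Given a non-negative continuous function $f$, the non-negativity of the summands and the inclusion $\{mn:0\leq n\leq N-1\}\subset\{0,1,\dots,mN-1\}$ yield
$\sum_{n=0}^{N-1} f(T^{mn}x)\leq \sum_{n=0}^{mN-1} f(T^n x).$
Dividing by $N$ and using that $x$ is $(T,\mu)$-generic,
$\limsup_{N\to\infty}\frac{1}{N}\sum_{n=0}^{N-1} f(T^{mn}x)\leq m\int f\,d\mu.$
Lemma \ref{criteria-generic} (with $C=m$) then delivers $(T^m,\mu)$-genericity.

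For the reverse direction, fix a continuous $f$ and note that for each $j\in\{0,1,\dots,m-1\}$ the composition $f\circ T^j$ lies in $\mathcal{R}(X,\mu)$, because its discontinuity set is contained in the discontinuity set of $T^j$, which is $\mu$-null by the same argument as above. Since $x$ is $(T^m,\mu)$-generic, the empirical measures $\frac{1}{N}\sum_{n=0}^{N-1}\delta_{T^{mn}x}$ converge weakly$^*$ to $\mu$, and by the standard approximation argument recalled just before Remark \ref{Rem:genequiv} the integrals of any $\mu$-Riemann integrable function converge, so
$\frac{1}{N}\sum_{n=0}^{N-1} f(T^{mn+j}x)=\frac{1}{N}\sum_{n=0}^{N-1}(f\circ T^j)(T^{mn}x)\longrightarrow \int f\,d\mu.$
Splitting $\{0,1,\dots,N-1\}$ into residue classes modulo $m$, each containing $N/m+O(1)$ elements, and averaging the $m$ limits above gives
$\frac{1}{N}\sum_{n=0}^{N-1} f(T^n x)\longrightarrow \int f\,d\mu,$
so $x$ is $(T,\mu)$-generic.

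The verifications required — admissibility of $T^m$, Riemann integrability of $f\circ T^j$, and the residue-class splitting — are all routine. I do not foresee a substantive obstacle here; the real work has been done in Lemma \ref{criteria-generic} and the preceding remark about convergence of $\int g\,d\nu_N$ for $g\in\mathcal{R}(X,\mu)$, which together reduce this lemma to the two short arguments above.
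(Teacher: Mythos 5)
Your proposal is correct and follows essentially the same route as the paper: the forward direction is the paper's argument verbatim (the inclusion $\{mn\} \subset \{0,\dots,mN-1\}$ plus Lemma \ref{criteria-generic} with $C=m$), and the reverse direction rests on the same two facts the paper uses, namely that $f\circ T^j \in \mathcal{R}(X,\mu)$ by $\mu$-continuity of $T$ and that $(T^m,\mu)$-genericity forces convergence of the averages of $f\circ T^j$ along the $T^m$-orbit. The only (cosmetic) difference is that in the reverse direction you compute the exact limit by summing over residue classes and so conclude genericity directly, whereas the paper settles for the one-sided bound $\limsup_N \frac{1}{N}\sum_{n=0}^{N-1} f(T^n x) \leq \int f\,d\mu$ and invokes Lemma \ref{criteria-generic} a second time.
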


\begin{proof}
Suppose that $x$ is $(T, \mu)$-generic.
Note that for any non-negative continuous function $f$ on $X$, 
\[  \frac{1}{N} \sum_{n=0}^{N-1} f(T^{mn} x) \leq m  \frac{1}{mN} \sum_{n=0}^{mN-1} f(T^{n} x),\]
and hence
\[ \limsup_{N \rightarrow \infty} \frac{1}{N} \sum_{n=0}^{N-1} f(T^{mn} x) \leq m \int f \, d \mu.\]
By Lemma \ref{criteria-generic}, $x$ is $(T^m, \mu)$-generic.

Now suppose that $x$ is $(T^m, \mu)$-generic.
Let $f$ be a non-negative continuous function on $X$. Then for any $N \in \mathbb{N}$ we have
\[ \frac{1}{N} \sum_{n=0}^{N-1}  f(T^n x) 
\leq \frac{[N/m] +1}{N} \sum_{r=0}^{m-1} \frac{1}{[N/m] +1} \sum_{n=0}^{[N/m]}  f \circ T^r (T^{mn} x). \]
Since  $T$ is $\mu$-continuous,  $f \circ T^r$ is Riemann integrable  for each $r = 0, 1, \dots, m-1$.
Therefore
\[ \lim_{N \rightarrow \infty} \frac{1}{[N/m]+1} \sum_{n=0}^{[N/m]}  f \circ T^r (T^{mn} x) =  \int f \circ T^r \, d \mu = \int f \, d \mu.\]
Hence
\[  \limsup_{N \rightarrow \infty} \frac{1}{N} \sum_{n=0}^{N-1}  f(T^n x) \leq  \int f \, d \mu,\]
which, by Lemma \ref{criteria-generic}, implies  $x$ is $(T, \mu)$-generic. 
\end{proof}

\begin{proof}[Proof of Theorem \ref{equiv:normal:F-representation}]

For $\mathbf{T} = (T, \mu, \mathcal{A}) \in \tilde{\mathcal{T}}$, write $\mathcal{A} = \{ I_j:  j \in D\}$, where $D$ is a finite or countably infinite index set. For a string $S = (a_1, \dots, a_k) \in D^k$, let $C_{\mathbf{T}} (S)$ be the cylinder set in $\mathcal{A}^k$ corresponding to $S$. Let $1_{C_\mathbf{T}(S)}$ denote the characteristic function of $C_{\mathbf{T}} (S)$.
For $x \in [0,1]$, let $(r_n(x))$ be the F-representation of $x$.  Note that for any $k \in \mathbb{N}$ and $S = (a_1, \dots, a_k) \in D^k$, 
\begin{equation}
\label{eq1:pf:thm6.1}
    \sum_{n=0}^{N-1} 1_{C_\mathbf{T}(S)}(T^n x) = \# \{ 1 \leq n \leq N: r_n(x) = a_1, \dots, r_{n+k-1}(x) = a_k\}.
\end{equation} 

(i) $\Rightarrow$ (ii): Let $x \in [0,1]$ be $\mathbf{T}$-normal. 
Then \eqref{eq1:pf:thm6.1} implies that for any $k \in \mathbb{N}$ and $S \in D^k$ 
\begin{equation}
\label{eq*:pf}
    \lim_{N \rightarrow \infty} \frac{1}{N} \sum_{n=0}^{N-1} 1_{C_\mathbf{T}(S)} (T^n x) = \int 1_{C_\mathbf{T}(S)} \, d \mu.
\end{equation} 
By linearity and the monotone convergence theorem, \eqref{eq*:pf} implies that  for any $ m \in \mathbb{N}$ and $I \in \sigma(\mathcal{A}^m)$,
\begin{equation}
\label{eq*2:pf}
    \lim_{N \rightarrow \infty} \frac{1}{N} \sum_{n=0}^{N-1} 1_I (T^n x) = \int 1_I \, d \mu.
\end{equation}
Since the partition $\mathcal{A}$ generates the $\sigma$-algebra $\mathcal{B}$, the standard approximation argument implies that \eqref{eq*2:pf} holds for any interval in $[0,1]$, and so, by Remark \ref{Rem:genequiv}, $x$ is $(T, \mu)$-generic.

(ii) $\Rightarrow$ (i): If $x \in [0,1]$ is $(T, \mu)$-generic, then \eqref{eq1:pf:thm6.1} and \eqref{eq:rem:genequiv} imply that for any $k \in \mathbb{N}$ and $S = (a_1, \dots, a_k) \in D^k$
\[ \lim_{N \rightarrow \infty} \frac{\# \{ 1 \leq n \leq N: r_n(x) = a_1, \dots, r_{n+k-1}(x) = a_k\}}{N}  = \lim_{N \rightarrow \infty} \frac{1}{N} \sum_{n=0}^{N-1} 1_{C_\mathbf{T}(S)}(T^n x) = \mu (C_\mathbf{T}(S)), \]
so $x$ is $\mathbf{T}$-normal.

(i) $\Leftrightarrow$ (iii) follows from Lemma \ref{Sec6.1:Thm6.3}. 

(i) $\Rightarrow$ (iv): If $x$ is $\mathbf{T}$-normal, then clearly $T^k x$ is $\mathbf{T}$-normal for any $k \geq 0$. Then since (i) is equivalent to (iii), $T^k x$ is $\mathbf{T}_m$-normal, which gives (iv). 

It is clear that (iv) $\Rightarrow$ (v) $\Rightarrow$ (vi). 

We will conclude the proof by showing (vi) $\Rightarrow$ (i). 
Let us assume that the index set $D$ is countably infinite (the proof for the case when $D$ is finite is analogous and is omitted).
By \eqref{eq*:pf}, it is enough to show that for any $k \in \mathbb{N}$ and $S = (a_1, \dots, a_k) \in D^k$
\begin{equation}
\label{eq:normal:cond:n}
\lim_{N \rightarrow \infty} \frac{1}{N} \sum_{n=0}^{N-1} 1_{C_\mathbf{T}(S)} (T^n x) = \mu(C_\mathbf{T}(S)).
\end{equation}
Let $m_l$ be a natural number with $m_l > k$. 
Let $(A_i)_{i=1}^{\infty}$ be an enumeration of strings in $D^{m_l}$.   
Since $C_\mathbf{T} (A_i) \cap C_\mathbf{T}(A_j) = \emptyset$ for $i \ne j$ and $[0,1] = \bigcup\limits_{i=1}^{\infty} C_\mathbf{T} (A_i) \, (\bmod \, \lambda)$, we have that $\sum\limits_{i=1}^{\infty} \mu(C_\mathbf{T} (A_i)) = 1.$
Let $\epsilon > 0$. 
Take $M \in \mathbb{N}$ such that $\sum\limits_{i = M+1}^{\infty} \mu(C_\mathbf{T} (A_i)) < \epsilon$. 

For $i \in \mathbb{N}$, let $k_i \in \mathbb{N}$ denote the number of occurrences of the string $S$ in the string $A_i$. 
Note that $0 \leq k_i \leq m_l - k +1$ for all $i = 1, 2, \dots, $ and
\begin{equation}
\label{est1:thm6.5:n} 
\sum_{n=0}^{N-1} 1_{C_\mathbf{T}(S)} (T^n x )  = \sum_{0 \leq n < \frac{N}{m_l}} \sum_{i=1}^{\infty} k_i 1_{C_\mathbf{T}(A_i)} (T^{m_l n} x) + O \left( \frac{N}{m_l} k \right). 
\end{equation}
We have
\begin{equation}
\label{est2:thm6.5:n} 
 \sum_{0 \leq n < \frac{N}{m_l}} \sum_{i=1}^{\infty} k_i 1_{C_{\mathbf{T}}(A_i)} (T^{m_l n} x) 
= \sum_{0 \leq n < \frac{N}{m_l}} \sum_{i=1}^{M} k_i 1_{C_{\mathbf{T}}(A_i)} (T^{m_l n} x) +
 \sum_{0 \leq n < \frac{N}{m_l}} \sum_{i=M+1}^{\infty} k_i 1_{C_{\mathbf{T}}(A_i)} (T^{m_l n} x).
 \end{equation}
Since $x$ is simply ${\mathbf{T}}^{m_l}$-normal, 
\begin{equation}
\label{est3:thm6.5:n} 
\lim_{N \rightarrow \infty} \frac{1}{N} \sum_{0 \leq n < \frac{N}{m_l}} \sum_{i=1}^{M} k_i 1_{C_{\mathbf{T}}(A_i)} (T^{m_l n} x) = \frac{1}{m_l} \sum_{i=1}^M k_i \mu({C_{\mathbf{T}}(A_i)}).
\end{equation}
Since for any $i \in \mathbb{N}$, $k_i \leq m_l$, we get the following estimate of the second term on the right hand side of \eqref{est2:thm6.5:n}:
\[  \sum_{0 \leq n < \frac{N}{m_l}} \sum_{i=M+1}^{\infty} k_i 1_{C_{\mathbf{T}}(A_i)} (T^{m_l n} x) 
\leq  m_l  \cdot \# \{0 \leq n < \frac{N}{m_l}:  T^{m_l n} x \notin \bigcup_{i=1}^{M} C_{\mathbf{T}} (A_i)\}. \]
We have 
\begin{align*} 
\lim_{N \rightarrow \infty} \frac{m_l}{N} \cdot \# \{0 \leq n < \frac{N}{m_l}:  T^{m_l n} x \notin \bigcup_{i=1}^{M} C_{\mathbf{T}} (A_i)\} 
 &= 1 -  \lim_{N \rightarrow \infty} \frac{m_l}{N} \cdot \# \{0 \leq n < \frac{N}{m_l}:  T^{m_l n} x \in \bigcup_{i=1}^{M} C_{\mathbf{T}} (A_i)\}  \\
 &= 1 - \sum_{i=1}^M \mu (C_{\mathbf{T}} (A_i)) = \sum_{i=M+1}^{\infty} \mu (C_{\mathbf{T}} (A_i)) < \epsilon.
 \end{align*}
 So 
 \begin{equation}
 \label{eq:6.5:n}
 \limsup_{N \rightarrow \infty} \frac{1}{N}  \sum_{0 \leq n < \frac{N}{m_l}} \sum_{i=M+1}^{\infty} k_i 1_{C_{\mathbf{T}}(A_i)} (T^{m_l n} x) \leq \epsilon.
 \end{equation}
Thus, by \eqref{est1:thm6.5:n}, \eqref{est2:thm6.5:n}, \eqref{est3:thm6.5:n} and \eqref{eq:6.5:n},
\begin{equation}
\label{est:Thm6.5:eq6.3:n}
 \limsup_{N \rightarrow \infty}  \left| \frac{1}{N} \sum_{n=0}^{N-1} 1_{C_\mathbf{T}(S)} (T^n x) - \frac{1}{m_l} \sum_{i=1}^M k_i \mu (C_{\mathbf{T}}(A_i)) \right|  = O \left(\epsilon + \frac{k}{m_l} \right).
 \end{equation}
Invoking the definition of $k_i$, we have for almost every $y \in [0,1)$
\begin{equation}
    \label{eq:new6.11:n}
 \sum_{i=1}^{\infty} k_i 1_{C_{\mathbf{T}}(A_i)} (y) =   \sum_{j=0}^{m_l - k} 1_{C_{\mathbf{T}}(S)} \circ T^j(y), 
 \end{equation}
and hence 
 \begin{equation}
    \label{eq:new6.11:2:n}
 0 \leq \sum_{j=0}^{m_l - k} 1_{C_{\mathbf{T}}(S)} \circ T^j(y) - \sum_{i=1}^{M} k_i 1_{C_{\mathbf{T}}(A_i)} (y) =  \sum_{i=M+1}^{\infty} k_i 1_{C_{\mathbf{T}}(A_i)} (y) . 
 \end{equation}
Since $\int 1_{C_{\mathbf{T}}(S)} \circ T^j \, d \mu = \mu (C_{\mathbf{T}}(S))$ for $0 \leq j \leq m_l-k$ and $k_i \leq m_l$ for all $i \in \mathbb{N}$,  from \eqref{eq:new6.11:2:n} we get 
\begin{equation*}
\label{eq:new6.12:2:n}
0 \leq (m_l -k) \mu(C_{\mathbf{T}}(S)) - \sum\limits_{i=1}^M k_i \mu(C_{\mathbf{T}} (A_i)) \leq m_l\sum\limits_{i=M+1}^{\infty}  \mu(C_{\mathbf{T}} (A_i)) \leq m_l \epsilon,
\end{equation*}
and so
\begin{equation*}
\label{eq:new6.12:n}
0 \leq \left(1 - \frac{k}{m_l}  \right) \mu(C_{\mathbf{T}}(S)) - \frac{1}{m_l}\sum\limits_{i=1}^M k_i \mu(C_{\mathbf{T}} (A_i)) \leq \epsilon,
\end{equation*}
which in turn implies that 
\begin{equation}
\label{eq:new6.12:nn}
\left| \mu(C_{\mathbf{T}}(S)) - \frac{1}{m_l}\sum\limits_{i=1}^M k_i \mu(C_{\mathbf{T}} (A_i)) \right| 
= O \left(\epsilon + \frac{k}{m_l} \right).
\end{equation}
From \eqref{est:Thm6.5:eq6.3:n} and \eqref{eq:new6.12:nn} we get
\begin{equation}
\label{eq6.12:n} 
\limsup_{N \rightarrow \infty} \left| \frac{1}{N} \sum_{n=0}^{N-1} 1_{C_\mathbf{T}(S)} (T^n x)  - \mu (C_{\mathbf{T}}(S)) \right| = O \left(\epsilon + \frac{k}{m_l} \right).
\end{equation}
Since $\epsilon$ is arbitrary, \eqref{eq6.12:n} implies
\begin{equation}
\label{eq6.13:n}
 \limsup_{N \rightarrow \infty} \left| \frac{1}{N} \sum_{n=0}^{N-1} 1_{C_\mathbf{T}(S)} (T^n x)  - \mu (C_{\mathbf{T}}(S)) \right| =  O \left( \frac{k}{m_l} \right).
\end{equation}
Taking $m_l \rightarrow \infty$ in \eqref{eq6.13:n} gives us \eqref{eq:normal:cond:n}. This concludes the proof of Theorem \ref{equiv:normal:F-representation}.
\end{proof}

\begin{Remark}
Let $b \in \mathbb{N}, b \geq 2$ and let $x = \sum\limits_{n=1}^{\infty} \frac{x_n}{b^n}$ be a $b$-normal number. 
A classical theorem due to Wall \cite{Wall} states that for any $m \in \mathbb{N}$ and $k \in \mathbb{N} \cup \{0\}$, the number $y = \sum\limits_{n=1}^{\infty} \frac{x_{mn+k}}{b^n}$ is also $b$-normal. 
This result easily follows form Theorem  \ref{intro:equiv:b-normal} (which is a rather special corollary of Theorem \ref{equiv:normal:F-representation}). 
Indeed, Theorem \ref{intro:equiv:b-normal} implies that if $x$ is normal in base $b$, then $b^k x$ is normal in base $b^m$,
and so for any $t \in \mathbb{N}$ and any string $S = (v_1, \dots, v_t)$ in $\{0, 1, \dots, b-1\}^t$ we have
\begin{align*} 
& \{ 0 \leq n \leq N-1:  x_{mn+k} = v_1, x_{m(n+1) +k} = v_2, \dots, x_{m (n+t-1) +k} =v_t\} \\
&\quad = \{ 0 \leq n \leq N-1:  T_b^{mn} (T_b^k x) \in  [v_1] \cap T_b^{-m} [v_2] \cap \cdots \cap  T_b^{-m(t-1)}[v_t]\}. 
\end{align*}
Hence
\begin{align} 
\label{eq:wall}
& \lim_{N \rightarrow \infty}\frac{\# \{0 \leq n \leq N-1: x_{mn+k} = v_1, x_{m(n+1) +k} = v_2, \dots, x_{m (n+t-1) +k} =v_t \}}{N} \notag \\
 & \quad \quad = \lambda ([v_1] \cap T_b^{-m} [v_2] \cap \cdots \cap T_b^{-m(t-1)} [v_t]) \notag \\
 & \quad \quad = \lambda ([v_1, v_2, \dots, v_t])
\end{align} 

Wall's theorem leads to the question whether a similar phenomenon of normality-preservation along arithmetic progressions takes place for general $\mathbf{T}$-normal  sequences. 
This question has, in general, a negative answer. Indeed, while for any ${\mathbf{T}} = (T, \mu, \mathcal{A}) \in \tilde{\mathcal{T}}$, we have that $\mathbf{T}$-normality of $x$ implies the $\mathbf{T}_m$-normality of $T^k x$ for any $m \in \mathbb{N}$ and $k \in \mathbb{N} \cup \{0\}$, the last equality in \eqref{eq:wall} may not hold for ${\mathbf{T}} = (T, \mu, \mathcal{A}) \in \tilde{\mathcal{T}}$, so arithmetic progression may not preserve  ${\mathbf{T}}$-normality. 
In fact, it was shown (\cite{HeVa}) that CF-normality is not preserved along arithmetic progression. (See also \cite{AbDo} for more discussion on the failure of normality preservation.)
\end{Remark}

We conclude this section by discussing a version of Theorem \ref{equiv:normal:F-representation} for joint normality.
\begin{Theorem}[Theorem \ref{intro:equiv:jointnormal:F-representation}]
\label{equiv:jointnormal:F-representation} 
Let ${\mathbf{T}}^{(i)}= (T_i, \mu_i, \mathcal{A}_i) \in \tilde{\mathcal{T}}, \, i = 1, 2 \dots, k$ such that $T_1 \times \cdots \times T_k$ is totally ergodic with respect to $\mu_1 \times \cdots \times \mu_k$.  Let $x \in [0,1]$.
The following are equivalent.
\begin{enumerate}[(i)]
\item $x$ is jointly $({\mathbf{T}}^{(1)}, \dots, {\mathbf{T}}^{(k)})$-normal. 
\item $k$-tuple $(x, \dots, x)$ is  $(T_1 \times \cdots \times T_k, \mu_1 \times \cdots \times \mu_k)$-generic: $(T_1^n x, \dots, T_k^n x)$ is equidistributed in $[0,1]^k$ with respect to $\mu_1 \times \cdots \times \mu_k$. 
\item For any $m \geq 2$, $x$ is jointly $({\mathbf{T}} ^{(1)}_m, \dots, {\mathbf{T}}^{(k)}_m)$-normal. 
\item$x$ is jointly simply $({\mathbf{T}} ^{(1)}_m, \dots, {\mathbf{T}}^{(k)}_m)$-normal for all $m \in \mathbb{N}$. 
\item There exists a sequence $(m_l)_{l \in \mathbb{N}}$ with $m_1 < m_2 < \cdots$ such that $x$ is jointly simply $({\mathbf{T}} ^{(1)}_{m_l}, \dots, {\mathbf{T}}^{(k)}_{m_l})$-normal.
\end{enumerate}
\end{Theorem}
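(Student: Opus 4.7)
The plan is to lift the statement to the product dynamical system on $X = [0,1]^k$ and then apply Theorem \ref{equiv:normal:F-representation} in essentially verbatim fashion to the diagonal orbit. Let $T = T_1 \times \cdots \times T_k$, $\mu = \mu_1 \times \cdots \times \mu_k$, and $\mathcal{A} = \{I_{j_1}^{(1)} \times \cdots \times I_{j_k}^{(k)} : j_i \in D_i\}$, and let $\Delta : [0,1] \to X$ be the diagonal embedding $\Delta(x) = (x, \dots, x)$, so that $T^n \Delta(x) = (T_1^n x, \dots, T_k^n x)$. Then $(X, \mathcal{B}(X), \mu, T)$ is a totally ergodic admissible system (admissibility because each piecewise monotone $T_i$ is $\mu_i$-continuous and hence $T$ is $\mu$-continuous), the partition $\mathcal{A}$ generates $\mathcal{B}(X)$ modulo $\mu$ (since each $\mathcal{A}_i$ generates $\mathcal{B}([0,1])$ modulo $\mu_i$), and $T^{-j}\mathcal{A} = T_1^{-j}\mathcal{A}_1 \otimes \cdots \otimes T_k^{-j}\mathcal{A}_k$, so the refinement $\mathcal{A}^m$ under $T$ coincides with $\mathcal{A}_1^m \otimes \cdots \otimes \mathcal{A}_k^m$, whose atoms are precisely the product cylinders $C_{\mathbf{T}^{(1)}}(S_1) \times \cdots \times C_{\mathbf{T}^{(k)}}(S_k)$ with $|S_i| = m$.

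Writing $\mathbf{T} = (T, \mu, \mathcal{A})$, condition (i) of the theorem becomes $\mathbf{T}$-normality of $\Delta(x)$, condition (ii) becomes $(T, \mu)$-genericity of $\Delta(x)$, condition (iii) becomes $\mathbf{T}_m$-normality of $\Delta(x)$ for each $m \geq 2$, and conditions (iv), (v) become, respectively, simple $\mathbf{T}_m$-normality of $\Delta(x)$ for all $m$ and for a sequence $m_l \to \infty$. The identification for (iv) and (v) uses that $(T^m)^{n-1}\Delta(x) \in C_{\mathbf{T}^{(1)}}(S_1) \times \cdots \times C_{\mathbf{T}^{(k)}}(S_k)$ if and only if $r_{m(n-1)+j}^{(i)}(x) = v_j^{(i)}$ for all $i$ and all $1 \leq j \leq m$.

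I would then run the argument of Theorem \ref{equiv:normal:F-representation} on the product system. The implication (i) $\Leftrightarrow$ (ii) follows from the standard approximation argument: frequencies on product cylinders determine, by linearity and monotone convergence on $\sigma(\mathcal{A}^m)$ and by density of the generating algebra, frequencies on Borel rectangles; conversely the linear span of indicators of Borel rectangles is dense in $\mathcal{R}(X, \mu)$. For (ii) $\Leftrightarrow$ (iii), Lemma \ref{Sec6.1:Thm6.3} applies directly to the totally ergodic admissible system $(X, \mathcal{B}(X), \mu, T)$ to give $(T, \mu)$-genericity iff $(T^m, \mu)$-genericity for every $m \geq 2$, and then the same approximation argument passes between the latter and $\mathbf{T}_m$-normality. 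The implications (iii) $\Rightarrow$ (iv) $\Rightarrow$ (v) are immediate.

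The non-trivial step, and the main obstacle, is (v) $\Rightarrow$ (i), which is handled exactly as in the proof of (vi) $\Rightarrow$ (i) in Theorem \ref{equiv:normal:F-representation}. Fix a product cylinder $C = \prod_i C_{\mathbf{T}^{(i)}}(S_i)$ with $k^* := \max_i |S_i|$, choose $m_l$ from the given sequence with $m_l > k^*$, and enumerate the (at most countably many) atoms $(\mathbf{A}_j)_{j \geq 1}$ of $\mathcal{A}^{m_l}$; let $\kappa_j$ count the positions $0 \leq r \leq m_l - k^*$ at which $T^r \mathbf{A}_j \subset C$. Then
\[
\sum_{n=0}^{N-1} 1_C(T^n \Delta(x)) = \sum_{0 \leq n < N/m_l} \sum_{j \geq 1} \kappa_j \, 1_{\mathbf{A}_j}(T^{m_l n}\Delta(x)) + O\!\left(\frac{N k^*}{m_l}\right),
\]
and simple $\mathbf{T}_{m_l}$-normality of $\Delta(x)$ yields the Ces\`aro asymptotic $\frac{1}{m_l}\sum_j \kappa_j \mu(\mathbf{A}_j)$, which by $T$-invariance of $\mu$ equals $(1 - k^*/m_l)\mu(C)$. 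Truncating the countable sum at level $M$ with $\sum_{j > M}\mu(\mathbf{A}_j) < \varepsilon$ (and noting $\kappa_j \leq m_l$) controls the tail as in the original proof, and letting first $m_l \to \infty$ and then $\varepsilon \to 0$ yields the desired frequency $\mu(C) = \prod_i \mu_i(C_{\mathbf{T}^{(i)}}(S_i))$. The only point requiring care is that each $D_i$ may be countably infinite, so the index set for $\mathcal{A}^{m_l}$ is genuinely countable; but since $\sum_j \mu(\mathbf{A}_j) = 1$, the tail estimate from the original proof applies verbatim.
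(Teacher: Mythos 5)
Your proposal is correct and takes essentially the same route as the paper: the authors likewise pass to the product system $(X^k, \otimes_1^k\mathcal{B}, \mu_1\times\cdots\times\mu_k, T_1\times\cdots\times T_k)$ restricted to the diagonal orbit, note that it is an admissible totally ergodic system, state the product versions of Lemma \ref{criteria-generic} and Lemma \ref{Sec6.1:Thm6.3} (namely Lemmas \ref{criteria-generic-joint} and \ref{lem:6.11}), and declare the remainder ``totally analogous'' to the proof of Theorem \ref{equiv:normal:F-representation}. Your write-up merely fills in the details the paper leaves implicit, in particular the counting argument for (v) $\Rightarrow$ (i) with $k^* = \max_i |S_i|$, which is handled correctly.
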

Similarly to the proof of Theorem \ref{equiv:normal:F-representation}, which is based on Lemma \ref{criteria-generic} and Lemma \ref{Sec6.1:Thm6.3}, 
Theorem \ref{equiv:jointnormal:F-representation} follows from two lemmas, which are natural extensions of Lemma \ref{criteria-generic} and Lemma \ref{Sec6.1:Thm6.3}.

Before formulating these lemmas, observe that 
 if  $(X, \mathcal{B}, \mu_i, T_i)$ is an admissible system for $1 \leq i \leq k$, then so is the product system $(X^k, \otimes_{1}^k \mathcal{B}, \mu_1 \times \cdots \times \mu_k, T_1 \times \cdots \times T_k)$. 
 So the following two lemmas are corollaries of, correspondingly, Lemma \ref{criteria-generic} and Lemma \ref{Sec6.1:Thm6.3} applied to the product system $(X^k, \otimes_{1}^k \mathcal{B}, \mu_1 \times \cdots \times \mu_k, T_1 \times \cdots \times T_k)$.

\begin{Lemma}
\label{criteria-generic-joint}
Let  $(X, \mathcal{B}, \mu_i, T_i)$ be an ergodic admissible system for $1 \leq i \leq k$. 
Assume that  $T_1 \times \cdots \times T_k$ is ergodic with respect to $\mu_1 \times \cdots \times \mu_k$.
 
 If $x \in X$ has the property that there is a constant $C > 0$ such that for any non-negative continuous functions $f_1, \dots, f_k$,  
\[ \limsup_{N \rightarrow \infty} \frac{1}{N} \sum_{n=0}^{N-1} \prod_{i=1}^k f_i(T_i^n x) \leq C \prod_{i=1}^k \int f_i \, d \mu_i, \]
then the $k$-tuple $(x, \dots, x)$ is $(T_1 \times \cdots \times T_k, \mu_1 \times \cdots \times \mu_k)$-generic.
\end{Lemma}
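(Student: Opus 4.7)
The plan is to apply Lemma \ref{criteria-generic} directly to the product system $(X^k, \otimes_{i=1}^k \mathcal{B}, \mu, T)$, where $\mu := \mu_1 \times \cdots \times \mu_k$ and $T := T_1 \times \cdots \times T_k$, evaluated at the diagonal point $y := (x, \ldots, x) \in X^k$. First I verify that this product system is admissible in the sense of the preceding subsection: $X^k$ is compact metric, and the set of discontinuities of $T$ is contained in $\bigcup_{i=1}^k X \times \cdots \times S_i \times \cdots \times X$, where $S_i$ is the discontinuity set of $T_i$; this has $\mu$-measure zero because $\mu_i(S_i)=0$ for each $i$. Ergodicity of $T$ with respect to $\mu$ is part of the hypothesis.

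The content of the proof is then verifying the hypothesis of Lemma \ref{criteria-generic} for $y$, namely
\[\limsup_{N \to \infty} \frac{1}{N} \sum_{n=0}^{N-1} F(T^n y) \leq C \int F \, d\mu\qquad \text{for every } F \in C(X^k),\ F \geq 0.\]
The given hypothesis provides this inequality only when $F$ is a tensor product $f_1 \otimes \cdots \otimes f_k$ of non-negative continuous functions, so the key task is to extend it to arbitrary non-negative $F \in C(X^k)$.

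I plan to do this via a partition-of-unity approximation. Fix $\epsilon > 0$, and use uniform continuity of $F$ on the compact space $X^k$ to pick $\delta > 0$ with $|F(w) - F(w')| < \epsilon$ whenever $d(w, w') < \delta$. Cover $X$ by finitely many balls of radius $\delta/2$ centered at $p_1, \ldots, p_m$, and pick a continuous non-negative partition of unity $\phi_1, \ldots, \phi_m$ on $X$ subordinate to this cover. Setting
\[G(x_1, \ldots, x_k) := \sum_{i_1, \ldots, i_k} F(p_{i_1}, \ldots, p_{i_k}) \prod_{j=1}^k \phi_{i_j}(x_j),\]
one obtains a finite \emph{non-negative} linear combination of non-negative tensor products with $\|F-G\|_{\infty} < \epsilon$, whence $\int G\, d\mu \leq \int F\, d\mu + \epsilon$. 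Applying the given hypothesis to each summand (a finite sum of non-negative quantities satisfies $\limsup \sum \leq \sum \limsup$ without any sign issues) yields
\[\limsup_{N \to \infty} \frac{1}{N} \sum_{n=0}^{N-1} G(T^n y) \leq C \int G\, d\mu \leq C \int F\, d\mu + C\epsilon,\]
and since $F \leq G + \epsilon$ pointwise we conclude $\limsup \frac{1}{N}\sum_n F(T^n y) \leq C \int F\, d\mu + (C+1)\epsilon$. Letting $\epsilon \to 0$ establishes the required bound, and then Lemma \ref{criteria-generic} applied to the product system gives that $y = (x,\ldots,x)$ is $(T,\mu)$-generic.

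The main (though modest) obstacle is ensuring that the tensor-product approximation uses non-negative coefficients and non-negative factors, so that passing the $\limsup$ through a finite sum does not introduce sign cancellations; a direct Stone--Weierstrass approximation by signed tensor products would not interact well with the one-sided inequality coming from the hypothesis. The partition-of-unity construction bypasses this issue cleanly, after which the reduction to Lemma \ref{criteria-generic} is essentially mechanical.
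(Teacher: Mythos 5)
Your proof is correct and follows essentially the same route as the paper, which simply declares the lemma a corollary of Lemma \ref{criteria-generic} applied to the admissible product system $(X^k, \otimes_{1}^k \mathcal{B}, \mu_1 \times \cdots \times \mu_k, T_1 \times \cdots \times T_k)$. Your partition-of-unity argument supplies the one step the paper leaves implicit --- upgrading the hypothesis from non-negative tensor products to arbitrary non-negative $F \in C(X^k)$ without sign cancellations --- and it does so correctly.
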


\begin{Lemma}
\label{lem:6.11}
Let  $(X, \mathcal{B}, \mu_i, T_i)$ be an ergodic admissible system for $1 \leq i \leq k$ and assume that $T_1 \times \cdots \times T_k$ is totally ergodic with respect to $\mu_1 \times \cdots \times \mu_k$.
Let $x \in X$ and let $m \geq 2$.
Then the $k$-tuple $(x, \dots, x)$ is  $(T_1 \times \cdots \times T_k, \mu_1 \times \cdots \times \mu_k)$-generic if and only if the $k$-tuple $(x, \dots, x)$ is  $(T_1^m \times \cdots \times T_k^m, \mu_1 \times \cdots \times \mu_k)$-generic 
\end{Lemma}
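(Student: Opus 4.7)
The plan is to derive Lemma \ref{lem:6.11} as an immediate application of Lemma \ref{Sec6.1:Thm6.3} to the product system. Set $Y = X^k$ equipped with the product metric (still a compact metric space), $\mathcal{C} = \bigotimes_{i=1}^k \mathcal{B}$ (which coincides with the Borel $\sigma$-algebra of $Y$ since $X$ is a separable metric space), $\nu = \mu_1 \times \cdots \times \mu_k$, and $S = T_1 \times \cdots \times T_k$. Since $S^m = T_1^m \times \cdots \times T_k^m$, the genericity of $(x,\dots,x)$ for $S$ is precisely the first condition of the lemma, and its genericity for $S^m$ is the second, so the desired equivalence is exactly the conclusion of Lemma \ref{Sec6.1:Thm6.3} applied to $(Y,\mathcal{C},\nu,S)$ at the diagonal point $(x,\dots,x)$.

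What needs to be checked, therefore, is that the product system $\mathbf{Y} = (Y,\mathcal{C},\nu,S)$ is a totally ergodic admissible system. The measure $\nu$ is a Borel probability measure on the compact metric space $Y$, and $S$ preserves $\nu$ because each $T_i$ preserves $\mu_i$. Writing $D_i \subset X$ for the discontinuity set of $T_i$ (so $\mu_i(D_i) = 0$), the discontinuity set of $S$ is contained in $\bigcup_{i=1}^k X \times \cdots \times D_i \times \cdots \times X$, whose $\nu$-measure is bounded above by $\sum_{i=1}^k \mu_i(D_i) = 0$. Hence $S$ is $\nu$-continuous, so $\mathbf{Y}$ is admissible. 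Total ergodicity of $S$ with respect to $\nu$ is assumed in the hypothesis of Lemma \ref{lem:6.11}.

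With admissibility and total ergodicity confirmed, Lemma \ref{Sec6.1:Thm6.3} applied to $\mathbf{Y}$ at the point $(x,\dots,x)$ yields the desired conclusion verbatim. The only step requiring any thought is the preservation of admissibility under finite products, in particular the $\nu$-continuity of $S$, which follows from the union-bound argument above. Everything else is a direct translation between the statements in $X^k$ and the abstract framework of Section \ref{Sec:FRep}, so there is no genuine obstacle beyond this bookkeeping; Lemma \ref{criteria-generic-joint} is handled analogously by applying Lemma \ref{criteria-generic} to $\mathbf{Y}$, after noting that products $\prod_{i=1}^k f_i(T_i^n x)$ correspond to values of the tensor product $f_1 \otimes \cdots \otimes f_k \in C(Y)$ evaluated at $S^n(x,\dots,x)$ and that tensor products of non-negative continuous functions generate a subset of $C(Y)$ sufficient to verify the hypothesis of Lemma \ref{criteria-generic} in $\mathbf{Y}$.
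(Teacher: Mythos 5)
Your proposal is correct and matches the paper's own treatment: the authors likewise observe that a finite product of admissible systems is admissible and then deduce Lemma \ref{lem:6.11} directly from Lemma \ref{Sec6.1:Thm6.3} applied to $(X^k, \otimes_1^k \mathcal{B}, \mu_1 \times \cdots \times \mu_k, T_1 \times \cdots \times T_k)$ at the diagonal point. Your union-bound verification of the $\nu$-continuity of the product map is exactly the detail implicit in the paper's remark.
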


The proof of Theorem \ref{equiv:jointnormal:F-representation} follows now from Lemmas \ref{criteria-generic-joint} and \ref{lem:6.11} and is totally analogous to the proof of  Theorem \ref{equiv:normal:F-representation} above.


\end{document}